\newtheorem{thm}{Theorem}[section]
\newtheorem{prob}[thm]{Problem}
\newtheorem{lem}[thm]{Lemma}
\newtheorem{prop}[thm]{Proposition}
\newtheorem{cor}[thm]{Corollary}
\newcommand{\R}{\mathbb{R}}
\newcommand{\Z}{\mathbb{Z}}
\newcommand{\qedclaim}{\hfill $\diamond$ \medskip}
\newenvironment{proofclaim}{\noindent{\em Proof.}}{\qedclaim} 
\renewcommand*{\thefootnote}{\fnsymbol{footnote}}
\DeclareMathOperator{\Cay}{Cay}
\DeclareMathOperator{\fed}{\gamma_{\,\textit{f}}^{\infty}}
\DeclareMathOperator{\med}{\gamma_{\textrm{m}}^{\infty}}
\tikzstyle{none}=[inner sep=0pt]
\definecolor{hexcolor0xf81e1c}{rgb}{0.973,0.118,0.110}
\definecolor{hexcolor0x3c00ff}{rgb}{0.235,0.000,1.000}
\tikzstyle{whitevertex}=[circle,fill=white,draw=black, scale = 0.7]
\tikzstyle{redvertex}=[circle,fill=hexcolor0xf81e1c,draw=black, scale = 0.7]
\tikzstyle{bluevertex}=[circle,fill=hexcolor0x3c00ff,draw=black, scale = 0.7]
\tikzstyle{greenvertex}=[circle,fill=green,draw=black, scale=0.7]
\tikzstyle{purplevertex}=[circle,fill=magenta,draw=black, scale=0.7]
\tikzstyle{grayvertex}=[circle,fill=white,draw=gray, scale=0.7]
\tikzstyle{blackvertex}=[circle,fill=black,draw=black, scale=0.7]
\tikzstyle{textbox}=[rectangle,fill=none,draw=none]
\tikzstyle{box}=[rectangle,fill=none,draw=black]
\tikzstyle{arc}=[black, ->]
\tikzstyle{grayarc}=[gray, ->]
\tikzstyle{bluearc}=[blue, ->]
\tikzstyle{grayedge}=[draw=gray]
\tikzstyle{blueedge}=[draw=blue]
\tikzstyle{rededge}=[draw=red]
\tikzstyle{edge}=[draw=black]
\tikzstyle{vertex}=[circle, ,fill=white,draw=black, scale=0.66]
\tikzstyle{10circle}=[circle, scale=10.0,draw=black] 
\tikzstyle{10oval}=[ellipse, scale=10.0,draw=black]
\author[1]{%
Fnu Devvrit}
\affil[1]{\footnotesize Department of Computer Science,
University of Texas at Austin,
Austin, TX, U.S.A.}
\author[2]{%
Aaron Krim-Yee}
\affil[2]{%
Department of Bioengineering,
McGill University,
Montreal, QC, Canada}
\author[3]{%
Nithish Kumar}
\affil[3]{Department of Computer Science,
Purdue University,
West Lafayette, IN, USA}
\author[4]{%
Gary MacGillivray}
\affil[4]{Department of Mathematics and Statistics,
University of Victoria,
Victoria, BC, Canada}
\author[5,6]{%
Ben Seamone}
\affil[5]{Mathematics Department, Dawson College, Montreal, QC, Canada}
\affil[6]{DIRO, Universit\'e de Montr\'eal, Montreal, QC, Canada}
\author[4]{%
Virg\'elot Virgile}
\author[7]{%
AnQi Xu}
\affil[7]{D\'epartement de m\'edecine,
Universit\'e de Montr\'eal,
Montreal, QC, Canada \\}
\newcommand\extrafootertext[1]{%
    \bgroup
    \renewcommand\thefootnote{\fnsymbol{footnote}}%
    \renewcommand\thempfootnote{\fnsymbol{mpfootnote}}%
    \footnotetext[0]{#1}%
    \egroup
}
\title{Fractional eternal domination: securely distributing resources across a network}
\begin{document}

\maketitle

\begin{abstract}
This paper initiates the study of fractional eternal domination in graphs, a natural relaxation of the well-studied eternal domination problem.  We study the connections to flows and linear programming in order to obtain results on the complexity of determining the fractional eternal domination number of a graph $G$, which we denote $\fed(G)$.   We study the behaviour of $\fed(G)$ as it relates to other domination parameters.  We also determine bounds on, and in some cases exact values for, $\fed(G)$ when $G$ is a member of one of a variety of important graph classes, including trees, split graphs, strongly chordal graphs, Kneser graphs, abelian Cayley graphs, and graph products.
\end{abstract}

\section{Introduction}

\extrafootertext{\noindent Email addresses: devvrit@cs.utexas.edu, aaron.krim-yee@mail.mcgill.ca, kumar410@purdue.edu, gmacgill@uvic.ca, bseamone@dawsoncollege.qc.ca, virgilev@uvic.ca, an.qi.xu@umontreal.ca}

Let $G = (V,E)$ be a graph.  We denote by $N_G(u)$ the \textit{(open) neighbourhood} of $u \in V(G)$, or the set of vertices which are adjacent to $u$ in $G$ (one may write $N(u)$ if $G$ is clear from context).  The \textit{closed neighbourhood} of $u$ is $N_G[u] = N_G(u) \cup \{u\}$.  The closed neighbourhood of a set $X \subseteq V(G)$ is $N_G[X] = \cup_{u \in X} N_G[u]$ (the open neighbourhood may be defined similarly).  A set $X \subseteq V(G)$ is called a \textit{dominating set} if $N_G[X] = V(G)$.
The cardinality of a minimum dominating set in $G$ is denoted $\gamma(G)$; this parameter is called the \textit{domination number} of $G$.  A well-studied variation of the domination number of graphs is the so-called fractional domination number (see, e.g., \cite{haynes1998domination, haynes1998fundamentals}).
A \emph{fractional dominating function} of $G$ is a function $w \colon V(G) \to \R$ such that $\sum_{x \in N[v]} w(x) \geq 1$ for all $v \in V$.
The \emph{total weight} of $w$ is $\sum_{x \in V} w(x)$.
A graph $G$ is \textit{$S$-fractionally dominated} if there exists a fractional dominating function of $G$ with total weight less than or equal to $S$.
The \textit{fractional domination number} of  $G$ is the smallest total weight of a fractional
dominating function of $G$; the parameter is denoted $\gamma_f(G)$.  

Many recent papers have considered dynamic models of graph ``protection'', where agents move through a graph in a way that somehow responds to ``attacks''.  We refer the reader to \cite{Klostermeyer2016protect} for a survey of models related to graph domination which includes both known results and many interesting conjectures.  Our work in this paper follows a line of research which originates from \cite{BCGMVW04}, where the ``eternal domination'' model was introduced.  We describe this model in terms of a two-player game, played between \textit{defender} and \textit{attacker}.  The defender controls a set of guards which occupy some subset of $V(G)$ (typically, only one guard is allowed to occupy any one vertex).  The attacker will attack some vertex in the graph, which forces the defender to respond to that attack.  More precisely, the defender chooses some set $D_1 \subseteq V(G)$ as the starting positions for the guards, and will choose each subsequent set $D_{i+1}$, $i \geq 1$, in response to the game-play of the attacker in the $i$-th round (this is sometimes referred to as the \textit{adaptive online model} of the game).  For each $i \geq 1$, the attacker's move in the $i$-th round is to choose some vertex $v_i \notin D_i$.  The defender must then choose some vertex $u_i \in D_i$ such that $v_i \in N(u_i)$, and set $D_{i+1} = D_i \cup \{v_i\} \setminus \{u_i\}$.  The goal of the defender is to be able to respond to any infinite sequence of attacks.  If the defender can win from some set $D_1$, then $D_1$ is called an eternal dominating set; note that such a set $D_1$ (and each subsequent $D_i$) must necessarily be a dominating set.  The \textit{eternal domination number} of $G$, denoted $\gamma^{\infty}(G)$, is the minimum cardinality of an eternal dominating set in $G$.  Recall that a \textit{clique} in $G$ is a subset of $V(G)$ whose elements are pairwise adjacent, and an \textit{independent set} (or \textit{stable set}) is a subset of $V(G)$ whose elements are pairwise non-adjacent.  The \textit{clique cover number} of $G$, denoted $\theta(G)$, is the minimum cardinality of a collection of cliques of $G$ whose union is $V(G)$.  The \textit{independence number} of $G$, denoted $\alpha(G)$, is the maximum cardinality of an independent set of $G$.  It is easy to argue (see \cite{goddard2005eternal}), that $$\alpha(G) \leq \gamma^{\infty}(G) \leq \theta(G).$$
One may consider a related model where, instead of only moving one guard to respond to an attack, one may reconfigure $D_i$ to $D_{i+1}$ by moving any number of guards between adjacent vertices so long as the attacked vertex receives one guard.  This is called the m\textit{-eternal domination model} (introduced in \cite{goddard2005eternal}, and an initial set of vertices that can guard any sequence of attacks is called an m-\textit{-eternal dominating set}.  The minimum cardinality of an m-eternal dominating set, denoted $\med(G)$, is the m-textit{-eternal domination number} of $G$.  It is clear that $\gamma(G) \leq \med(G)$.  By a clever application of Hall's Theorem, given in \cite{goddard2005eternal}, it has been shown that $\med(G) \leq \alpha(G)$; thus we have the following fundamental inequality chain:
$$\gamma(G) \leq \med(G) \leq \alpha(G) \leq \gamma^{\infty}(G) \leq \theta(G).$$

We consider an eternal domination model that may be considered as the fractional relaxation of m-eternal domination, which we call \textit{fractional eternal domination}\footnote{This model could also be called \textit{fractional {\rm m}-eternal domination}, allowing for the possibility of a fractional relaxation of the one-guard move model.  We suppress the ``m-'' throughout for the sake of simplicity.}.  We assign non-negative real weights to $V(G)$ so that $S$-fractional domination is maintained for some fixed value $S$ subject to vertex attacks.  Denote the weight at vertex $v$ at time-step $i$ by $w_i(v)$, and write $w(v)$ for the initial weight $w_1(v)$ of the vertex $v$.  After the $i$-th attack at $v_i$, the defender may move weight from any vertex $x$ to the vertices in $N(x)$.  If $m_{xy,i}$ denotes the weight moved from $x$ to $y$ in round $i$, then we require only that $\sum_{y \in N(x)}m_{xy,i} \leq w_{i}(x)$.  The defender may do this simultaneously for as many vertices as necessary, but the resulting weight function $w_{i+1}$ must $S$-fractionally dominate the graph and $w_{i+1}(v_i) \geq 1$.  We denote by $\fed(G)$ the infimum over all $S$ for which $G$ can be eternally $S$-fractionally dominated, and call this the \textit{fractional eternal domination number} of $G$.
Note that if one restricts all quantities in the above description to be integral, then one recovers the $m$-eternal domination model.  

In Section \ref{sec:linprog}, we look at fractional eternal domination through the lens of linear programming.  Section \ref{sec:behaviour} examines some basic properties of $\fed(G)$, in particular as it relates to other domination parameters.  In Sections \ref{sec:basic}, \ref{sec:Cayley}, and \ref{sec:products} we establish properties of $\fed(G)$ when $G$ is a member of a number of important graph classes.

\section{Linear programming and reconfiguration}\label{sec:linprog}

We begin with a look at the fractional eternal domination problem through a linear programming lens, and give conditions under which a graph's fractional eternal domination number, as well as a guarding strategy, can be computed efficiently.

Let $G$ be a graph with $V(G) = \{1, 2, \ldots, n\}$, and assign to vertex $i$ a variable $x_i$. Denote by $N_i = N[i]$, and for $S \subseteq V(G)$ let $N_S = \cup_{i \in S} N_i$.  Without loss of generality, we may assume that vertex $1$ is attacked first.  The quantity $\fed(G)$ must then be at least the solution to the following LP, which corresponds to the minimum weight of a fractional dominating set in which vertex 1 has weight at least 1: \newline

\begin{compactitem}
    \item[\textbf{Minimize}] $\sum \limits_{i=1}^n x_i$ subject to
    \item $x_1 \geq 1$
    \item $x_i \geq 0$ for all $i = 2,\ldots,n$
    \item $\sum \limits_{j \in N_i} x_j \geq 1$ for all $i = 2,\ldots,n$. \\
\end{compactitem}

Let $w_1$ and $w_2$ be fractional dominating functions of $G$
with the same total weight.
We say that \emph{$w_1$ can be reconfigured to $w_2$} if, for $1 \leq i \leq n$, the weight $w(i)$ can
be redistributed to vertices in $N[i]$ so that the resulting fractional dominating function is $w_2$.

Let $N_{w_1, w_2}$ be the network with vertex set $V(N_{w_1, w_2}) = \{s, t\} \cup \{1, 2, \ldots, n\} \cup \{1',2', \ldots, n'\}$ and
 arc set $A(N_{w_1, w_2}) = \{s i: 1 \leq i \leq n\} \cup \{i j': j \in N_G[i]\} \cup \{i' t: 1 \leq i' \leq n\}$.
 The capacity of the arc $si$ is $w_1(i),\ 1 \leq i \leq n$.
 The capacity of the arc $i't$ is $w_2(i'),\ 1 \leq i' \leq n$.
 Each arc in the set $\{i j': j \in N_G[i]\}$ has infinite capacity.

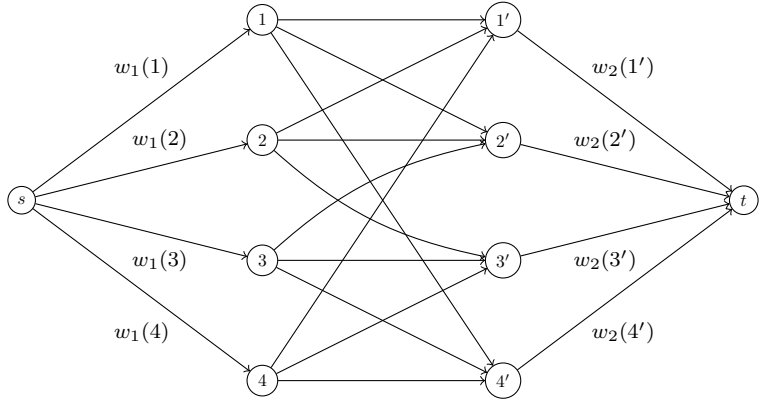
\begin{figure}[h!]
    \centering
	\begin{tikzpicture}[scale=.8]
		\begin{pgfonlayer}{nodelayer}
			\node [style=vertex] (0) at (-6, 0) {$s$};
			\node [style=vertex] (1) at (-2, 3) {$1$};
			\node [style=vertex] (2) at (-2, 1) {$2$};
			\node [style=vertex] (3) at (-2, -1) {$3$};
			\node [style=vertex] (4) at (-2, -3) {$4$};
			\node [style=vertex] (5) at (2, 3) {$1'$};
			\node [style=vertex] (6) at (2, 1) {$2'$};
			\node [style=vertex] (7) at (2, -1) {$3'$};
			\node [style=vertex] (8) at (2, -3) {$4'$};
			\node [style=vertex] (9) at (6, 0) {$t$};
		\end{pgfonlayer}
		\begin{pgfonlayer}{edgelayer}
			\draw [style=arc] (0) to (1);
			\node[] at (-4,2.2) {\footnotesize $w_1(1)$};
			\draw [style=arc] (0) to (2);
			\node[] at (-3.7,1) {\footnotesize $w_1(2)$};
			\draw [style=arc] (0) to (3);
			\node[] at (-3.7,-1) {\footnotesize $w_1(3)$};
			\draw [style=arc] (0) to (4);
			\node[] at (-4,-2.2) {\footnotesize $w_1(4)$};
			\draw [style=arc] (1) to (5);
			\draw [style=arc] (1) to (6);
			\draw [style=arc] (1) to (8);
			\draw [style=arc] (2) to (6);
			\draw [style=arc] (2) to (5);
			\draw [style=arc] [bend right=15] (2) to (7);
			\draw [style=arc] (3) to (7);
			\draw [style=arc] [bend left=15] (3) to (6);
			\draw [style=arc] (3) to (8);
			\draw [style=arc] (4) to (8);
			\draw [style=arc] (4) to (7);
			\draw [style=arc] (4) to (5);
			\draw [style=arc] (5) to (9);
			\node[] at (4,2.2) {\footnotesize $w_2(1')$};
			\draw [style=arc] (6) to (9);
			\node[] at (3.7,1) {\footnotesize $w_2(2')$};
			\draw [style=arc] (7) to (9);
			\node[] at (3.7,-1) {\footnotesize $w_2(3')$};
			\draw [style=arc] (8) to (9);
			\node[] at (4,-2.2) {\footnotesize $w_2(4')$};
		\end{pgfonlayer}
	\end{tikzpicture}
	
	\label{Reconfiguration}
	\caption{Reconfiguration network $N_{w_1,w_2}$ for $C_4$, with $V = \{1,2,3,4\}, E = \{12,23,34,14\}$}
\end{figure}

 The following lemma follows easily from the definition of a flow.
 
\begin{lem}
Let $G$ be a graph with $V(G) = \{1, 2, \ldots, n\}$.
Let $w_1$ and $w_2$ be fractional dominating functions of $G$
with the same total weight.
Then $w_1$ can be reconfigured to $w_2$ if and only if there is a flow from $s$ to $t$ in $N_{w_1, w_2}$ with value 
$\sum\limits_{i = 1}^n w_1(i)$.
 \end{lem}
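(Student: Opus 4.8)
The plan is to prove both directions of the equivalence by exhibiting an explicit correspondence between reconfigurations of $w_1$ into $w_2$ and $s$--$t$ flows of value $\sum_{i=1}^n w_1(i)$ in the network $N_{w_1,w_2}$. The key observation is that a reconfiguration is precisely a specification of how much weight travels from each vertex $i$ to each vertex $j \in N_G[i]$, and these transfer amounts are exactly the flow values on the ``middle'' arcs $ij'$.

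First I would establish the forward direction. Suppose $w_1$ can be reconfigured to $w_2$, and let $m_{ij}$ denote the weight moved from $i$ to $j$ (where $j \in N_G[i]$) during this reconfiguration, so that $\sum_{j \in N_G[i]} m_{ij} = w_1(i)$ for each $i$, and $\sum_{i \,:\, j \in N_G[i]} m_{ij} = w_2(j)$ for each $j$ (the total weight arriving at $j$ must equal its new weight $w_2(j)$). I would then define a flow $f$ by setting $f(si) = w_1(i)$, $f(ij') = m_{ij}$, and $f(i't) = w_2(i)$. Verifying that $f$ is a valid flow is routine: capacity constraints on $si$ and $i't$ hold with equality, the middle arcs have infinite capacity, and conservation at each $i$ and each $j'$ follows directly from the two summation identities above. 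Its value is $\sum_i f(si) = \sum_i w_1(i)$, as required.

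Conversely, given a flow $f$ of value $\sum_i w_1(i)$, I would read off a reconfiguration by setting $m_{ij} = f(ij')$. Here the main subtlety — and the step I expect to require the most care — is arguing that the arcs $si$ and $i't$ must all be saturated. Since the value of the flow equals $\sum_i w_1(i) = \sum_i w_1(i)$, which is the total capacity of all arcs leaving $s$, every arc $si$ carries exactly its capacity $w_1(i)$; and because $w_1$ and $w_2$ have equal total weight, the arcs into $t$ have the same total capacity, forcing each $i't$ to be saturated at $w_2(i)$ as well. Flow conservation at $i$ then gives $\sum_{j \in N_G[i]} m_{ij} = w_1(i)$, so the outgoing weight at each vertex matches what is available, and conservation at $j'$ gives $\sum_{i} m_{ij} = w_2(j)$, so the reconfigured function is exactly $w_2$.

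Finally I would note that the constraint $\sum_{y \in N(x)} m_{xy} \le w_1(x)$ from the definition of reconfiguration is automatically respected, since the transfers include the possibility of a vertex keeping weight via the arc $ii'$ (as $i \in N_G[i]$), and saturation forces equality rather than strict inequality in the aggregate. This makes the correspondence a genuine bijection between reconfigurations and maximum-value flows, completing the proof.
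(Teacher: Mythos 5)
Your proof is correct and is exactly the argument the paper leaves implicit: the paper offers no written proof, stating only that the lemma ``follows easily from the definition of a flow,'' and your explicit correspondence between the transfer amounts $m_{ij}$ and the flow values on the middle arcs $ij'$ (with the saturation argument for the converse, which is indeed the only point needing care) is precisely that routine verification.
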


In the fractional eternal domination problem, 
defending $G$ from an infinite sequence of attacks requires a collection of fractional dominating functions of the same total weight such that:
\begin{enumerate}
\item  for each $i, 1 \leq i \leq n$ there exists at least one function in the collection in which the weight assigned to vertex $i$ is at least 1, and 
\item  for each fractional dominating function $w$ in the collection and  each $j, 1 \leq j \leq n$, there exists a fractional dominating function $w_j$ in the collection in which the weight assigned to vertex $j$ is at least 1 and $w$ can be reconfigured to $w_j$.
\end{enumerate}
It follows that there are $n$ fractional dominating functions
$w_1, w_2, \ldots, w_n$ (not necessarily distinct) of the same total weight and such that $w(i) = 1,\ 1 \leq i \leq n$.
If we impose the additional condition that each of these can be reconfigured to every other function in the collection, then the smallest total weight for which there exists such a collection of fractional dominating functions is an upper bound on the fractional eternal domination number.
We show that this quantity can be determined as the solution to a linear program with rational constraints, and hence is rational.

Let $G$ be a graph with $V  = \{1, 2, \ldots, n\}$, and
let $w$ be a fractional dominating function of $G$.
Then $w$ corresponds to an $n$-tuple $X = (x_1, x_2, \ldots, x_n)$ where $x_i = w(i)$.
In this work, we will use the functional notation and the $n$-tuple notation interchangeably and will refer to $X$ as
being a fractional dominating function.

For $1 \leq i \leq n$, 
$X_i = (x_{i 1}, x_{i 2}, \ldots, x_{i n})$, and let
$\mbox{FDS}_i$ denote the collection of inequalities corresponding to 
$X_i$ being a fractional dominating function:
$$\mbox{FDS}_i: \quad x_{ii} = 1, \mbox{ and } \sum_{j \in N_G[k]} x_{ij} \geq 1, \quad 1 \leq k \leq n.$$

For $1 \leq i, j \leq n$ and $i \neq j$, let $\mbox{RECONFIG}_{ij}$ denote the set of constraints in the linear program corresponding to determining the maximum value of a flow from $s$ to $t$ in $N_{X_i, X_j}$, together with the constraint that the value of the flow is $\sum\limits_{\ell = 1}^n x_{i \ell}$.

Based on the discussion above, an optimal solution to the following linear program $A$ provides an upper bound on the fractional eternal domination number and the collection $X_1, X_2, \ldots, X_n$ of fractional dominating functions of the given total weight that can be used to defend $G$.

\bigskip
\noindent $A:\ \ $ {\bf Minimize} $\sum\limits_{j=1}^n x_{1 j}$ subject to
\begin{compactitem}
\item  FDS$_i, \quad  i = 1, \ldots, n$
\item  $\sum\limits_{j=1}^n x_{i, j} = \sum\limits_{j=1}^n x_{1, j}, \quad i = 2, \ldots, n$
\item RECONFIG$_{ij}, \quad   i = 1, \ldots, n,\ \ j = 1, \ldots, n,  \mbox{ and } i \neq j$
\item $x_{ij} \geq 0, \quad  i = 1, \ldots, n,\ \ j = 1, \ldots, n$
\end{compactitem}

\bigskip
If the optimal solution to linear program $A$ equals $\gamma_f^\infty$, then
we say \emph{$G$ can be eternally fractionally dominated by $n$ f.d.-functions}.

Since the number of constraints in the linear program $A$ is polynomial in $n$, and
linear programming problems are solvable in polynomial time, we have the following:

\begin{prop}
If $G$ can be eternally fractionally dominated by $n$ f.d.-functions, then $\fed(G)$
and a strategy for eternally fractionally dominating $G$ can be computed in polynomial time.
Further, $\fed(G)$ is a rational number.
\label{nsets}
\end{prop}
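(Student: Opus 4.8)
The plan is to reduce the entire statement to an analysis of the linear program $A$. By the definition given immediately above, the hypothesis that $G$ can be eternally fractionally dominated by $n$ f.d.-functions means precisely that the optimal value of $A$ equals $\fed(G)$. Hence it suffices to establish three things: that $A$ has size polynomial in $n$, that its optimum together with an optimal solution is therefore computable in polynomial time, and that this optimum is rational and that the solution realizing it encodes a valid guarding strategy.

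First I would verify the size bound. The decision variables fall into two groups: the $n^2$ weight variables $x_{ij}$, and the flow variables hidden inside the blocks $\mbox{RECONFIG}_{ij}$. For each ordered pair $(i,j)$ with $i \neq j$, the network $N_{X_i, X_j}$ has $O(n)$ vertices and $O(n^2)$ arcs, so its flow subprogram contributes $O(n^2)$ variables (one per arc), together with $O(n)$ conservation constraints and $O(n^2)$ capacity constraints; ranging over the $O(n^2)$ pairs yields $O(n^4)$ flow variables and constraints in total. The $\mbox{FDS}_i$ blocks add only $O(n^2)$ constraints, and the equal-weight and nonnegativity conditions add $O(n^2)$ more. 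Thus $A$ has polynomially many variables and constraints. Moreover every coefficient is $0$ or $\pm 1$: the capacities in $N_{X_i,X_j}$ are the weight variables $x_{i\ell}$ and $x_{j\ell}$ themselves, so each capacity constraint is a linear inequality linking a flow variable to a weight variable with unit coefficients. Hence the coefficient data are integral, a fortiori rational.

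Next I would invoke a polynomial-time linear programming algorithm (for instance the ellipsoid method of Khachiyan). Applied to $A$, it computes the optimal value and an optimal solution in time polynomial in $n$ and the bit-size of the data. Because $A$ is feasible and bounded below (the objective is a sum of nonnegative variables with $x_{11}=1$), its optimum is attained at a vertex of a rational polyhedron and is therefore a rational number; combined with the hypothesis that this optimum equals $\fed(G)$, we obtain simultaneously that $\fed(G)$ is rational and that it is computed in polynomial time. The optimal solution supplies the collection $X_1, \ldots, X_n$ of f.d.-functions directly, while the flow values in each $\mbox{RECONFIG}_{ij}$ block, read through the flow characterization of reconfiguration established in the preceding Lemma, prescribe exactly how to redistribute weight in response to an attack; these data together constitute the desired strategy.

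The only genuinely substantive points, as opposed to bookkeeping, are the correctness of the encoding and the rationality subtlety. For correctness one leans on the earlier Lemma: forcing the value of the flow in $N_{X_i, X_j}$ to equal the common total weight $\sum_\ell x_{i\ell}$ is equivalent to the existence of a reconfiguration of $X_i$ to $X_j$, so feasibility of the $\mbox{RECONFIG}_{ij}$ constraints is exactly reconfigurability. Answering an attack on vertex $j$ by reconfiguring the current function to $X_j$ (which carries weight at least $1$ on $j$), together with closure of the collection under all ordered pairwise reconfigurations, then guarantees survival against any infinite attack sequence. For rationality, the delicate issue is that $\fed(G)$ is defined as an infimum over $S$ and so a priori need neither be attained nor rational; the hypothesis is precisely what removes this difficulty, by identifying that infimum with the attained, rational optimum of $A$. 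I expect this last point — making explicit that it is the hypothesis, and not the LP machinery alone, that forces attainment and hence rationality — to be the step most in need of care, even though the underlying facts about polynomial-time solvability of rational linear programs are entirely standard.
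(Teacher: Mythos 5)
Your proposal is correct and follows exactly the paper's argument: the paper derives Proposition~\ref{nsets} in one line from the observation that the linear program $A$ has polynomially many variables and constraints, that linear programs with rational data are solvable in polynomial time with rational optima, and that the hypothesis identifies this optimum with $\fed(G)$. Your additional bookkeeping (counting the flow variables in the $\mbox{RECONFIG}_{ij}$ blocks, and noting that the hypothesis is what converts the infimum defining $\fed(G)$ into an attained rational value) simply makes explicit what the paper leaves implicit.
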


We will show later that Proposition \ref{nsets} applies to split graphs.

\section{Behaviour of $\fed(G)$}\label{sec:behaviour}

In this section, we compare $\fed(G)$ to some of the domination parameters given in the introduction and explore which numerical values of $\fed(G)$ are possible. We begin with some obvious propositions.

\begin{prop}\label{prop:basicbound}
For any graph $G$, $\gamma_f(G) \leq \fed(G) \leq \med(G)$.
\end{prop}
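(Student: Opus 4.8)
The plan is to prove the two inequalities separately, treating the lower bound as essentially definitional and reserving the real work for the upper bound. For the lower bound $\gamma_f(G) \leq \fed(G)$, I would argue directly from the definitions. Fix any $S$ for which $G$ can be eternally $S$-fractionally dominated, and let $w_1$ be the initial weight function of a winning defender strategy. By definition every weight function arising in the game---in particular $w_1$---must $S$-fractionally dominate $G$, so $w_1$ is a fractional dominating function of total weight at most $S$. Since $\gamma_f(G)$ is the minimum total weight of any fractional dominating function, $\gamma_f(G) \leq S$. As this holds for every admissible $S$, taking the infimum over all such $S$ yields $\gamma_f(G) \leq \fed(G)$.

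For the upper bound $\fed(G) \leq \med(G)$, the idea is that the m-eternal domination model is exactly the integral restriction of fractional eternal domination, so any m-eternal strategy can be read off as a fractional one. Concretely, I would fix an optimal m-eternal dominating strategy using $k = \med(G)$ guards, and to each guard configuration $D_i$ that arises associate the indicator weight function $w_i = \mathbf{1}_{D_i}$, that is, $w_i(v) = 1$ if $v \in D_i$ and $0$ otherwise. Since $D_i$ is a dominating set, every closed neighbourhood meets $D_i$, so $\sum_{x \in N[v]} w_i(x) \geq 1$ for all $v$, and $w_i$ is a fractional dominating function of total weight $|D_i| = k$. I would then check that a legal response in the m-eternal game translates into a legal fractional move: when a guard slides from $x$ to an adjacent vertex $y$, this is the transfer of one unit of weight along the arc $xy$, and since configurations have at most one guard per vertex the cumulative outflow from $x$ never exceeds $w_i(x)$. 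Performing all such guard moves simultaneously conserves the total weight $k$ and produces $w_{i+1} = \mathbf{1}_{D_{i+1}}$, which again $k$-fractionally dominates $G$ and satisfies $w_{i+1}(v_i) \geq 1$ because $v_i \in D_{i+1}$. Hence $G$ is eternally $k$-fractionally dominated and $\fed(G) \leq k = \med(G)$.

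The routine parts (that indicators of dominating sets are fractional dominating functions, and that the infimum argument closes the lower bound) are immediate; the step requiring the most care is verifying that the guard-movement constraints of the m-eternal model map faithfully onto the weight-transfer constraints $\sum_{y \in N(x)} m_{xy,i} \leq w_i(x)$ of the fractional model, and in particular that simultaneous moves remain admissible and weight-preserving. This is the only place where the one-guard-per-vertex convention and the ability to move many guards at once are both used, so I would state that correspondence explicitly to be sure the translated strategy genuinely certifies $\fed(G) \leq \med(G)$.
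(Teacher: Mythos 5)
Your proof is correct and is exactly the argument the paper has in mind: the paper states this as one of its ``obvious propositions'' without proof, relying on the observation (made in its introduction) that restricting all weights and moves to integral values recovers the m-eternal domination model, which is precisely your indicator-function translation for the upper bound, while your lower bound is the same definitional observation that every weight function arising in a winning strategy is itself a fractional dominating function. Your explicit check that guard moves satisfy the transfer constraints $\sum_{y \in N(x)} m_{xy,i} \leq w_i(x)$ is a worthwhile detail the paper leaves implicit, but it does not constitute a different approach.
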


To see another upper bound on $\fed(G)$, note that doubling the weight of every vertex in a minimal fractional dominating function on $V(G)$ gives a straightforward guarding strategy -- let $w_1$ be a fractional dominating function of weight $\gamma_f(G)$ and initially let $w_2 = w_1$.  After an attack in an odd-numbered time step, weights from $w_1$ are moved to respond, while weights from $w_2$ are returned to their initial assignments.  Similarly, after an attack in an even-numbered time step, weights from $w_2$ are moved to respond, while weights from $w_1$ are returned to their initial assignments.  

\begin{prop}\label{prop:fracupperbound}
For any graph $G$, $\fed(G) \leq 2\gamma_f(G)$.
\end{prop}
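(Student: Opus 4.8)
The plan is to give an explicit guarding strategy that maintains a collection of fractional dominating functions of total weight at most $2\gamma_f(G)$, exactly as sketched in the paragraph immediately preceding the statement. First I would fix a fractional dominating function $w$ of minimum total weight $\gamma_f(G)$, and define the initial weight assignment on $V(G)$ to be $W = w + w$, i.e. two superimposed copies, which I will track separately as a ``working'' copy $w^{(1)}$ and a ``reserve'' copy $w^{(2)}$, each equal to $w$. The total weight is $2\gamma_f(G)$, and since $w$ alone already $\gamma_f(G)$-fractionally dominates $G$, the sum $w^{(1)}+w^{(2)}$ certainly $S$-fractionally dominates $G$ for $S = 2\gamma_f(G)$.

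The core of the argument is the reconfiguration step. I would maintain the invariant that, at the start of each round, one of the two copies is in its ``home'' configuration equal to $w$. Suppose vertex $v_i$ is attacked in round $i$. Because $w$ is a fractional dominating function, $\sum_{x \in N[v_i]} w(x) \geq 1$, so the working copy $w^{(1)}$ alone holds total weight at least $1$ on the closed neighbourhood $N[v_i]$. I would then move that weight along edges into $v_i$: since every vertex $x \in N[v_i]$ is either $v_i$ itself or adjacent to $v_i$, each such $x$ may legally send its $w(x)$ share to $v_i$ in a single step, and the total delivered is $\sum_{x\in N[v_i]} w(x) \geq 1$, so the post-move weight at $v_i$ is at least $1$, meeting the requirement $w_{i+1}(v_i)\geq 1$. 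Simultaneously, the reserve copy $w^{(2)}$ is left untouched, so after the move the combined function is still $S$-fractionally dominating: indeed $w^{(2)} = w$ already dominates on its own, so adding any non-negative contribution from the redistributed working copy preserves every domination inequality. The roles of the two copies then swap for the next round, restoring the invariant: the untouched reserve becomes the new ``home'' copy, and the copy we just perturbed becomes the working copy to be reset on the following move. One must verify that resetting a perturbed copy back to $w$ is itself a legal reconfiguration, but this is immediate by reversing the arcs used to deliver weight, or more cleanly by observing (via Lemma 2.2) that any two functions of equal total weight related by single-edge transfers admit the required flow.

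The only point requiring care is the bookkeeping that guarantees, at every round, the copy being used to respond is genuinely in its home position $w$ and thus has at least weight $1$ available in $N[v_i]$. This is handled by the odd/even alternation: on odd rounds we respond with $w^{(1)}$ while $w^{(2)}$ sits at home, and on even rounds the reverse, with each copy returned to $w$ during the round in which it is idle. I expect the main (modest) obstacle to be stating the simultaneous ``respond-and-reset'' move so that the legality condition $\sum_{y\in N(x)} m_{xy,i}\le w_i(x)$ is respected at every vertex in a single time step; since the working copy's outgoing transfers and the other copy's returning transfers involve disjoint weight reserves, these can be scheduled concurrently without exceeding any vertex's available weight. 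Because the total weight never exceeds $2\gamma_f(G)$ and the strategy answers an arbitrary infinite attack sequence, we conclude $\fed(G)\le 2\gamma_f(G)$.
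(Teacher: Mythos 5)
Your proposal is correct and follows essentially the same strategy as the paper: superimpose two copies of a minimum-weight fractional dominating function, respond to each attack with the copy currently in its home position, and simultaneously return the other (perturbed) copy to its home configuration, alternating roles each round. The additional care you take with the legality of the simultaneous respond-and-reset move and the reversibility of the reset fills in details the paper leaves implicit, but the underlying argument is identical.
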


Note that the bound in Proposition \ref{prop:fracupperbound} is tight, in that for every $k \geq 1$ there exists a graph $G$ for which $\fed(G) = 2k$ and $\gamma_f(G) = k$.  Let $G$ be a graph obtained from a path $P = v_1v_2\cdots v_{3k}$ by adding two leaves $\{x_i,y_i\}$ to $v_i$ for each $i \equiv 2 \pmod 3$.  It is easy to check that $\gamma_f(G) = k$.  Attacking $x_{3r+2}$ then $y_{3r+2}$ requires a total weight of at least $2$ to be present in $N_G[v_{3r+2}]$, and thus it follows that $\fed(G) \geq 2k$.

From the definition of $\fed(G)$, it may not be immediately apparent that $\fed(G)$ can be non-integral, or that it can even differ from the bounds in Proposition \ref{prop:basicbound}.  We show how to construct infinite families of graphs with non-integral values of $\fed(G)$ which also differ from the bounds of Proposition \ref{prop:basicbound}.

Before proceeding, we note the following easy observation.

\begin{prop}\label{prop:noncomplete}
If $G$ is not a complete graph, then $\fed(G) \geq 2$.
\end{prop}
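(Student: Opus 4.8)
The plan is to exhibit a short attack sequence that cannot be defended with total weight less than $2$. Since $G$ is not complete, there exist two vertices $u$ and $v$ that are non-adjacent, so in particular $u \notin N[v]$. The single structural fact I need about reconfiguration is a \emph{one-step} bound: because weight may only be pushed from a vertex to its neighbours, the weight that can be present at a vertex $z$ immediately after a reconfiguration is at most the total weight lying in $N[z]$ immediately before it. Concretely, from the move constraints $\sum_{y \in N(x)} m_{xy,i} \leq w_i(x)$ one obtains $w_{i+1}(z) \leq w_i(z) + \sum_{y \in N(z)} m_{yz,i} \leq \sum_{x \in N[z]} w_i(x)$ for every vertex $z$, since each incoming transfer $m_{yz,i}$ is at most $w_i(y)$.

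Suppose, for contradiction, that $G$ can be eternally $S$-fractionally dominated for some $S < 2$; since the set of admissible total weights is upward closed, the assumption $\fed(G) < 2$ guarantees such an $S$. The attacker plays as follows: at some round $i$ attack $v_i = u$, and at the next round attack $v_{i+1} = v$. By the rules of the game, the defender's response to the first attack yields a configuration $w_{i+1}$ with $w_{i+1}(u) \geq 1$, and the response to the second attack must yield $w_{i+2}(v) \geq 1$.

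Applying the one-step bound to the second move gives $1 \leq w_{i+2}(v) \leq \sum_{x \in N[v]} w_{i+1}(x)$. Because $u \notin N[v]$, the sum on the right omits the weight $w_{i+1}(u) \geq 1$; as the total weight of $w_{i+1}$ is $S$, I conclude $\sum_{x \in N[v]} w_{i+1}(x) \leq S - w_{i+1}(u) \leq S - 1 < 1$, a contradiction. Hence no admissible $S < 2$ exists, and therefore $\fed(G) \geq 2$.

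I expect the only real subtlety to be the bookkeeping around the infimum in the definition of $\fed(G)$ and the precise statement of the one-step bound; once the inequality $w_{i+1}(z) \leq \sum_{x \in N[z]} w_i(x)$ is isolated, the rest is immediate. One should also confirm that the admissible total weights form an upward-closed set (extra weight left stationary never harms a defence), which is what justifies reducing the hypothesis $\fed(G) < 2$ to the existence of a concrete value $S < 2$ against which to run the attack.
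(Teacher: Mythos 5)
Your proof is correct, but it follows a genuinely different route from the paper's. The paper needs only a \emph{single} attack: after an attack on $x$, the model requires the resulting configuration both to satisfy $w(x) \geq 1$ and to remain a fractional dominating function, so in particular $w(N[y]) \geq 1$ for a vertex $y$ nonadjacent to $x$; since $x \notin N[y]$, these two weight requirements fall on disjoint sets of vertices and the total weight is at least $2$. You instead attack twice ($u$, then $v$) and invoke a locality lemma --- weight arriving at $v$ in one reconfiguration step must already have been in $N[v]$ --- to reach a contradiction, and you never use the requirement that intermediate configurations fractionally dominate the graph. What the paper's approach buys is brevity: the domination invariant built into the model does all the work, with no analysis of how weight moves. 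What your approach buys is generality: your argument shows that $\fed(G) \geq 2$ would persist even in a weakened model where the defender must only deliver weight $1$ to each attacked vertex, with no domination constraint between attacks. Both arguments ultimately rest on the same structural fact, namely that nonadjacency of two vertices makes one of them lie outside the closed neighbourhood of the other; your bookkeeping around the infimum and the upward-closedness of admissible weights is sound (indeed the infimum alone already supplies an admissible $S < 2$), just more elaborate than the paper requires.
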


\begin{proof}
Let $x,y \in V(G)$ be nonadjacent.  If $x$ is attacked, then $w(x) \geq 1$.  For $y$ to be guarded, $w(N[y]) \geq 1$.  Since $x\notin N[y]$, the result follows.
\end{proof}

Our construction shows that any admissible rational value of $\fed(G)$ (that is, equal to $1$ in the case of complete graphs or at least $2$ in the case of non-complete graphs) is possible.

\begin{thm}\label{Gtd}
For any rational number $q > 2$, there exists a graph $G$ such that $\gamma_f(G) < \fed(G) < \gamma_{\rm m}^{\infty}(G)$ and $\fed(G) = q$.
\end{thm}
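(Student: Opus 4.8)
The plan is to construct, for each rational $q > 2$, an explicit graph $G$ whose structure forces all three quantities $\gamma_f(G)$, $\fed(G)$, and $\med(G)$ to take controllable values with $\fed(G) = q$ strictly between the other two. Since $q$ is rational, write $q = a/b$ in lowest terms (or some convenient form), and aim for a graph built from a ``core gadget'' that pins down $\fed$ to exactly $q$, attached to additional structure whose only purpose is to separate $\gamma_f$ from $\fed$ on the low side and $\fed$ from $\med$ on the high side. The tight-example discussion following Proposition~\ref{prop:fracupperbound} (paths with attached leaf-pairs) is the natural template: attacking two leaves attached to a common vertex forces weight $\geq 2$ into a closed neighbourhood, and by varying how many leaves one attaches to a vertex one can force the local ``defended weight'' to be any desired integer, while fractional domination alone asks only for weight $\geq 1$.

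\textbf{Key steps, in order.} First I would fix the target gadget: take a central vertex $c$ and attach enough pendant-like structure so that a sequence of attacks on the pendants forces total guarding weight exactly $q$ to be present, computed via the lower-bound technique used for Proposition~\ref{prop:fracupperbound} (repeated attacks on non-adjacent neighbours of a cut vertex accumulate demanded weight in a closed neighbourhood). To realise a \emph{non-integer} rational $q$, I expect to need overlapping neighbourhoods or weighted ``fan'' gadgets so that the optimal defending strategy splits weight fractionally; the cleanest route is probably to blow up a small graph or take a suitable join/union so that the LP of Section~\ref{sec:linprog} has optimum exactly $q$. Second, I would verify the \emph{upper} bound $\fed(G) \le q$ by exhibiting an explicit collection of fractional dominating functions of total weight $q$, one per vertex with weight $1$ there, that are pairwise reconfigurable — i.e., a feasible point of linear program $A$ — and compute its weight. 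Third, I would verify the matching \emph{lower} bound $\fed(G) \ge q$ by naming an adversarial attack sequence and applying, for each forced constraint, the argument that an attacked vertex must carry weight $\geq 1$ while its non-neighbours must still be dominated (the mechanism of Proposition~\ref{prop:noncomplete}). Finally I would check the two separating inequalities: $\gamma_f(G) < q$ by exhibiting a single fractional dominating function of weight $< q$ (static domination is cheaper than eternal, since it need not survive a worst-case attack), and $q < \med(G)$ by arguing that the \emph{integral} m-eternal guards cannot exploit fractional splitting, so the integrality penalty strictly raises the cost.

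\textbf{The main obstacle} will be the lower bound $\fed(G) \ge q$ together with forcing $\fed$ to be exactly the \emph{non-integer} value $q$ rather than rounding up to an integer. The upper-bound direction is a construction and can be certified by a feasible LP solution; the delicate part is proving no clever reconfiguration strategy does better. I would handle this by reducing to a small number of ``critical'' attack sequences and showing that each imposes a linear constraint of the form $w(N[v]) \ge k$ on some closed neighbourhood, then arguing these constraints are simultaneously binding at the optimum so their sum yields total weight $\ge q$; the fractionality of $q$ should emerge because the binding neighbourhoods overlap, forcing a shared weight to be counted with the right multiplicities. A secondary subtlety is ensuring the attacker truly can realise the worst case at \emph{every} time step (not just from the initial configuration), which is exactly what conditions (1) and (2) preceding linear program $A$ formalise; I would confirm the constructed defending functions satisfy mutual reconfigurability so that the upper and lower bounds meet at $q$.
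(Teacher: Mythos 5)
There is a genuine gap: your proposal is a strategy outline whose central object --- the graph itself --- is never constructed. Everything rests on a ``core gadget'' that you defer to ``overlapping neighbourhoods or weighted fan gadgets,'' a ``blow-up,'' or a ``suitable join/union'' with the hope that the LP of Section~\ref{sec:linprog} comes out to exactly $q$; but producing such a gadget for an arbitrary non-integer rational $q$ \emph{is} the theorem, so nothing has been proved. Worse, the template you start from is fatally flawed: the tight example after Proposition~\ref{prop:fracupperbound} (a path with pendant leaf pairs) is a tree, and the paper proves that $\fed(T)=\med(T)$ for every tree $T$, so no construction in that family can satisfy the required strict inequality $\fed(G)<\med(G)$. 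The pendant-attack mechanism you invoke also only yields constraints of the form $w(N[v])\ge k$ for integers $k$ over disjoint neighbourhoods, which gives integer lower bounds and hence no route to non-integral $q$. Finally, both separations are asserted on general principles that are false: ``static domination is cheaper than eternal'' fails for cycles $C_{3k}$, complete graphs, and $Q_3$ (where $\gamma_f=\fed$), and ``the integrality penalty strictly raises the cost'' fails for trees, cycles, $K_4$, and $Q_3$ (where $\fed=\med$). Strictness must come from the specific construction, and you supply none.

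For comparison, the paper resolves all of these issues with one family: the split graph $\mathcal{G}_{t,d}$ whose clique is $X=[t]$ and whose independent set consists of \emph{two} copies of every $d$-subset of $[t]$, each joined to its $d$ elements. Fractionality arises from covering constraints rather than pendant attacks: every independent vertex needs weight $\ge 1$ on its $d$ clique-neighbours, which forces (by a symmetrization argument) weight $t/d$ spread over $X$, so $\gamma_f(\mathcal{G}_{t,d})=t/d$; an extra mobile unit of weight $1$ suffices and is necessary to answer attacks, giving $\fed(\mathcal{G}_{t,d})=1+t/d=q$. The separation from $\med$ is obtained not from a generic integrality claim but from the duplicated subsets: each independent vertex has a twin, so no independent vertex is domination-critical, and the characterization of $\med$ for split graphs from the literature yields $\med(\mathcal{G}_{t,d})=\gamma(\mathcal{G}_{t,d})+1=t-d+2$, which exceeds $1+t/d$ for suitable choices of $t$ and $d$ representing $q-1$. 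If you want to salvage your approach, the essential missing ideas are (i) a non-tree gadget whose domination constraints overlap so as to force fractional optimal weights, and (ii) a structural reason (such as twins) why integral guards must pay strictly more; both are exactly what $\mathcal{G}_{t,d}$ provides.
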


\begin{proof}
For two positive integers $t \geq d$, let $X = [t]$ and $Y = {[t] \choose d}$. Let $Y'$ be a set of cardinality ${t \choose d}$ which contains a copy $y'$ of each element $y \in Y$.  
Denote by $\mathcal{G}_{t,d}$ the graph with $V = X \cup Y \cup Y'$ obtained by taking a complete graph on $X$ and an edge between $x \in X$ and $y \in Y$ and an edge between $x \in X$ and $y' \in Y'$ if and only if $x \in y$. We first prove that $\med(\mathcal{G}_{t,d}) = t-d+2$.  It is shown in \cite{BDEMY17} that a split graph $G$ satisfies $\med(G) \in \{\gamma(G),\gamma(G)+1\}$, and $\med(G) = \gamma(G)$ if and only if every vertex in the independent set is domination-critical. Since $t-d+1$ elements from $t$-set are necessary and sufficient to hit every $d$-subset, we have that $\gamma(\mathcal{G}_{t,d}) = t-d+1$.  Since $Y \cup Y'$ is the independent set and no vertex in this set is domination-critical (each vertex has a ``twin'', or a vertex with the same neighbourhood), it follows that $\med(G) = \gamma(G)+1 = t-d+2$.

We now claim that $\fed(\mathcal{G}_{t,d}) = 1 + \frac{t}{d}$.  We allow two possible types of fractional eternal dominating functions.  In both, a weight of $\frac{1}{d}$ is assigned to every vertex in $X$.  We may then either have an additional weight of $1$ added to a vertex in $X$ (state 1) or a weight of $1$ on some vertex in $Y \cup Y'$ (state 2).  We let $z$ denote the vertex with weight at least $1$ in the argument below.

Suppose we are in state $1$ and an attack happens at $x \in X$.  A weight of one is easily passed from $z$ to $x$ as they are adjacent.  If $y \in Y \cup Y'$ is attacked, each of its neighbours sends weight $\frac{1}{d}$ and $z$ sends $\frac{1}{d}$ to each neighbour $y$.  
If we are in state $2$, then an attack in $X$ is handled similarly to the previous case.  If an attack happens at $y \in Y \cup Y'$, then $z$ sends $\frac{1}{d}$ to each of its neighbours, vertices in $N(z) \setminus N(y)$ send weight $\frac{1}{d}$ to vertices in $N(y) \setminus N(z)$ (this is easy as all vertices are in $X$), and all vertices in $N(y)$ send $\frac{1}{d}$ to $y$.  In all cases, we finish in state 1 or state 2.

To see that no weighting with lower total weight is possible, we simply note that any attempt to lower the weight of one vertex in $X$ would require increasing the weights of the other vertices in $X$ by at least that amount to maintain domination of $Y$, and that it is never necessary to have weight assigned to vertices in $Y$ since their closed neighbourhoods are contained in the closed neighbourhoods of vertices in $X$.  Thus
	\begin{compactitem}
    \item $\gamma_f(\mathcal{G}_{t,d}) = \frac{t}{d}$,
    \item $\fed(\mathcal{G}_{t,d}) = 1 + \frac{t}{d}$, and
    \item $\gamma_{\rm m}^\infty(\mathcal{G}_{t,d}) = \gamma(\mathcal{G}_{t,d})+1 = t-d+2$. \qedhere
    \end{compactitem}
\end{proof}
    

Having proven that $\fed(G)$ can take on any admissible rational value, we now note that the construction of $\mathcal{G}_{t,d}$ can be easily modified to provide an infinite family of graphs for which $\fed(G) = q$ for any admissible rational value of $q$.

\begin{thm}\label{Gmn}
For any rational number $q \geq 2$, there exists an infinite family of graphs $\mathcal{G}_q$ such that $\fed(G) = q$ for every $G \in \mathcal{G}_q$.
\end{thm}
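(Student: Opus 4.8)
The plan is to observe that the value $\fed(\mathcal{G}_{t,d}) = 1 + \tfrac{t}{d}$ computed in the proof of Theorem \ref{Gtd} depends only on the \emph{ratio} $t/d$ and not on $t,d$ individually, so that scaling both parameters by a common factor produces infinitely many distinct graphs sharing the same fractional eternal domination number. Concretely, given a rational $q \geq 2$, I would write $q - 1 = \tfrac{a}{b}$ in lowest terms with positive integers $a \geq b$ (possible exactly because $q-1 \geq 1$). For each integer $k \geq 1$ set $t_k = ka$ and $d_k = kb$, so $t_k \geq d_k \geq 1$ and $\mathcal{G}_{t_k,d_k}$ is well defined; the computation from Theorem \ref{Gtd} then gives
\[
\fed(\mathcal{G}_{t_k,d_k}) = 1 + \frac{t_k}{d_k} = 1 + \frac{a}{b} = q
\]
for every $k$. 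I would take $\mathcal{G}_q = \{\mathcal{G}_{t_k,d_k} : k \geq 1\}$.

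To confirm the family is genuinely infinite (not a finite list repeated up to isomorphism), I would simply count vertices: $|V(\mathcal{G}_{t_k,d_k})| = t_k + 2\binom{t_k}{d_k} \geq t_k = ka$, which tends to infinity with $k$. Hence the members of $\mathcal{G}_q$ have unboundedly many vertices and are pairwise non-isomorphic, so $\mathcal{G}_q$ is an infinite family, as required.

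The only genuine subtlety — and the step I would check with care — is the boundary case $q = 2$, where $a = b = 1$ and thus $t_k = d_k = k$. Since Theorem \ref{Gtd} is stated only for $q > 2$ (equivalently $t > d$), I must verify that its value survives the degenerate case $t = d$. When $t = d$ the set $Y = \binom{[t]}{d}$ contains the single $d$-set $[t]$, so $\mathcal{G}_{k,k}$ is a clique on $X$ together with two mutually non-adjacent vertices each joined to all of $X$. Here Proposition \ref{prop:noncomplete} gives $\fed \geq 2$, while the explicit two-state strategy in the proof of Theorem \ref{Gtd} (weight $\tfrac{1}{d}$ on each vertex of $X$ plus a migrating unit of weight) still applies to give $\fed \leq 1 + \tfrac{t}{d} = 2$, and the lower-bound argument also carries over since $N[y] \subseteq N[x]$ for any $x \in X$. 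Thus $\fed(\mathcal{G}_{k,k}) = 2$ for all $k$, handling $q = 2$. I expect this boundary verification, rather than the scaling idea itself, to be the main (and still modest) obstacle.
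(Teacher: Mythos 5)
Your proof is correct, but it takes a genuinely different route from the paper's. The paper fixes a single pair $(t,d)$ with $q = 1 + t/d$ and generates the infinite family by modifying $\mathcal{G}_{t,d}$ itself: each vertex $y$ of the independent side is replaced by an arbitrary graph $H_y$ whose vertices are all joined to $N_{\mathcal{G}_{t,d}}(y)$, and the strategy and lower bound from Theorem \ref{Gtd} transfer because the closed neighbourhoods of the new vertices still contain, and are dominated through, the same $d$-subsets of $X$. You instead leave the construction untouched and vary the parameters along the ray $t_k = ka$, $d_k = kb$, exploiting the fact that $\fed(\mathcal{G}_{t,d}) = 1 + t/d$ depends only on the ratio $t/d$. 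Both arguments are sound, but they buy different things. The paper's blow-up yields extra structure used immediately afterwards: since the $H_y$ are arbitrary, every graph $H$ occurs as an induced subgraph of some member of $\mathcal{G}_q$, which is exactly what the remark following Corollary \ref{unbounded1} invokes; your scaled family does not provide this. Conversely, your scaling makes $\gamma(\mathcal{G}_{t_k,d_k}) = k(a-b)+1$ grow with $k$ (when $a > b$), which is essentially the re-parameterization the paper must introduce separately inside the proof of Corollary \ref{unbounded1}. One point where your write-up is more careful than the paper: Theorem \ref{Gtd} is stated only for $q > 2$, so the case $q = 2$ (i.e.\ $t = d$) genuinely requires the boundary verification you supply --- the uniform weight $1/d$ on the clique plus a migrating unit still defends, and Proposition \ref{prop:noncomplete} gives the matching lower bound --- whereas the paper's proof of Theorem \ref{Gmn} silently inherits the same boundary issue. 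A last small remark: for infinitude you only need unbounded order, which you observe; the stronger claim that the members are pairwise non-isomorphic does hold, since the orders $t_k + 2\binom{t_k}{d_k}$ are strictly increasing in $k$, but it is not needed.
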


\begin{proof}
Let $q = t/d + 1$ and suppose that $V(\mathcal{G}_{t,d})$ partitions into a clique $X$ and independent set $Y$.  Let $\mathcal{G}_q$ be the family of graphs obtained from $\mathcal{G}_{t,d}$ by replacing each $y \in Y$ with any arbitrary graph $H_y$ and joining each vertex of $H_y$ to $N_{\mathcal{G}_{t,d}}(y)$.  By applying the same argument as in the proof of Theorem \ref{Gtd}, it follows that for any $G \in \mathcal{G}_q$,  $\fed(G) = \fed(\mathcal{G}_{t,d}) = 1+\frac{m}{n} = q$.
\end{proof}

In light of our linear programming discussion from Section \ref{sec:linprog}, we also give a linear programming lower bound for $\fed(G)$.  For a given graph $G$, denote by $f(v)$ the least total weight of a fractional dominating function $D_v$ in which the
weight assigned to vertex $v$ is at least 1 (note that $f(v)$ can be computed in polynomial time by linear programming).
Let $F(G) = \max\{f(v): v \in V\}$.
\begin{prop}
For any graph $G$, $\fed(G) \geq F(G)$.
\end{prop}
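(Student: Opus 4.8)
The plan is to show $\fed(G) \geq f(v)$ for every vertex $v \in V$ separately, and then take the maximum over all $v$ to conclude $\fed(G) \geq F(G)$. The whole argument rests on a single structural observation: in any successful eternal $S$-fractional domination strategy, the reconfiguration moves only \emph{redistribute} weight between adjacent vertices and never create or destroy it, so the total weight is an invariant of the game. Hence whatever configuration the defender lands in immediately after responding to an attack is still a fractional dominating function of the same total weight as the starting function.

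I would argue as follows. Suppose $G$ can be eternally $S$-fractionally dominated, and let $w_1$ be the initial weight function of the winning strategy, with total weight $W$. By the definition of $S$-fractional domination we have $W \leq S$, and by the invariance noted above every subsequent weight function $w_i$ also has total weight exactly $W$. Fix any vertex $v \in V$. Since the attacker may attack $v$ (say in the first round), the rules force the defender to produce a response $w_2$ that $S$-fractionally dominates $G$ and satisfies $w_2(v) \geq 1$. Thus $w_2$ is a fractional dominating function of $G$ whose weight at $v$ is at least $1$ and whose total weight is $W$. By the definition of $f(v)$ as the \emph{least} total weight of such a function, this witnesses $f(v) \leq W \leq S$.

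Because $v$ was arbitrary, we obtain $f(v) \leq S$ for all $v \in V$, hence $F(G) = \max\{f(v) : v \in V\} \leq S$. As this holds for every value of $S$ for which $G$ can be eternally $S$-fractionally dominated, taking the infimum over all such $S$ yields $\fed(G) \geq F(G)$, as required.

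The only point requiring care — and the step I expect to be the main (if modest) obstacle — is justifying the weight-conservation invariant cleanly from the movement rule $\sum_{y \in N(x)} m_{xy,i} \leq w_i(x)$: one must confirm that the total weight after a round equals the total weight before it, so that the post-attack function genuinely has total weight $W$ rather than merely $\leq S$, and that the defender is indeed compelled to place weight at least $1$ on the attacked vertex. Both follow directly from the definition of the fractional eternal domination model, so no estimation or case analysis is needed; the feasibility of $f(v)$ itself (hence that $F(G)$ is well defined) is immediate since assigning weight $1$ to $v$ and covering the rest of $G$ always gives a finite-weight admissible function.
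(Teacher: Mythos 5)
Your proof is correct; the paper in fact states this proposition without any proof, treating it as immediate from the definitions, and your argument (force an attack on $v$, observe that reconfiguration conserves total weight, and invoke the definition of $f(v)$ as a minimum) is precisely the intended justification --- it is the same reasoning the paper uses informally in Section~\ref{sec:linprog} when asserting that $\fed(G)$ is at least the optimum of the LP for the first attacked vertex. Nothing further is needed.
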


We now turn our attention to a comparison of $\fed(G)$ and $\gamma(G)$, each of which is bounded below by $\gamma_f(G)$ and bounded above by $\med(G)$.  We will see later that, for certain graph classes, $\fed(G) \geq \gamma(G)$ (see Corollaries \ref{packcor1} and \ref{packcor2}); however, we show here that the two parameters are, in general, not comparable.

Recall that, by Proposition \ref{prop:fracupperbound}, we have that $\fed(G) \leq 2\gamma_f(G)$ for any graph $G$, and so $\fed(G) \leq 2\gamma(G)$.  In other words, $\fed(G)$ cannot grow unboundedly large in terms of $\gamma(G)$.  On the other hand, the construction of $\mathcal{G}_{t,d}$ in Theorem \ref{Gtd} and its modification in Theorem \ref{Gmn} gives us the following:

\begin{cor}\label{unbounded1}
For any $\varepsilon>0$ and any rational number $q \geq 2$, there exists an infinite family of graphs $\mathcal{G}_q$ such that $q = \fed(G) < \varepsilon \gamma(G)$ for each $G \in \mathcal{G}_q$.
\end{cor}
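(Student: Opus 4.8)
The plan is to exhibit, for any prescribed $\varepsilon > 0$ and rational $q \geq 2$, an infinite family $\mathcal{G}_q$ of graphs satisfying $q = \fed(G) < \varepsilon\gamma(G)$ by exploiting the freedom already built into the constructions of Theorems \ref{Gtd} and \ref{Gmn}. The key observation is that in $\mathcal{G}_{t,d}$ the value $\fed(\mathcal{G}_{t,d}) = 1 + \frac{t}{d}$ depends only on the \emph{ratio} $t/d$, whereas $\gamma(\mathcal{G}_{t,d}) = t - d + 1$ grows without bound as $t$ and $d$ grow with their ratio held fixed. So the strategy is to write $q - 1 = t/d$ in lowest terms and then scale: replace $(t,d)$ by $(kt, kd)$ for each positive integer $k$, which keeps $\fed$ equal to $q$ while driving $\gamma$ to infinity.

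Concretely, I would first fix a representation $q = 1 + \frac{t_0}{d_0}$ with $t_0 \geq d_0 \geq 1$ integers (possible since $q \geq 2$ is rational). For each integer $k \geq 1$ set $t = k t_0$ and $d = k d_0$ and form the graph $\mathcal{G}_{t,d}$ from Theorem \ref{Gtd}. By that theorem, $\fed(\mathcal{G}_{t,d}) = 1 + \frac{t}{d} = 1 + \frac{t_0}{d_0} = q$, independent of $k$, while $\gamma(\mathcal{G}_{t,d}) = t - d + 1 = k(t_0 - d_0) + 1$. If $t_0 > d_0$ this clearly tends to infinity with $k$; if $t_0 = d_0$ (i.e.\ $q = 2$), then $\gamma \equiv 1$ and this particular scaling fails to make $\gamma$ large, so one must instead realize $q = 2$ as $1 + \frac{t_0}{d_0}$ with $t_0 > d_0$ impossible — here I would use a slightly different representation such as $q = 2 = 1 + \frac{k}{k}$, noting $\gamma = 1$, and handle $q=2$ separately, or simply observe that the statement's inequality $q < \varepsilon\gamma(G)$ forces $\gamma(G)$ large, so for fixed $\varepsilon$ one just needs $\gamma(G) > q/\varepsilon$, achievable by choosing $k$ large whenever $t_0 \neq d_0$.

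Once $\gamma(\mathcal{G}_{t,d})$ is shown to be unbounded in $k$, the final inequality is immediate: given $\varepsilon > 0$, choose $k$ large enough that $\gamma(\mathcal{G}_{t,d}) = k(t_0 - d_0) + 1 > q/\varepsilon$, which rearranges to $q < \varepsilon\gamma(\mathcal{G}_{t,d})$ while $\fed = q$ holds for every such $k$. To obtain a genuinely \emph{infinite} family with a common $\fed$ value, I would additionally invoke the $H_y$-substitution from Theorem \ref{Gmn}, which produces infinitely many non-isomorphic graphs with the same $\fed$; combining the two degrees of freedom (scaling $k$ and varying the attached graphs $H_y$) yields the required infinite family $\mathcal{G}_q$ with every member satisfying both $\fed(G) = q$ and $q < \varepsilon\gamma(G)$.

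The main obstacle I anticipate is the boundary case $q = 2$, where the natural lowest-terms representation gives $t_0 = d_0$ and hence bounded $\gamma$; the fix is to confirm that one can still choose parameters making $\gamma$ large — for instance by taking $t = d + 1$ so that $t/d \to 1$ and $q \to 2$ only in the limit, which does not hit $q = 2$ exactly. One must check whether the corollary's phrasing ``$q \geq 2$'' genuinely includes $q = 2$ as an attainable value with unbounded $\gamma$, or whether the intended reading (consistent with the strict inequality $\fed < \med$ appearing in the motivating Theorem \ref{Gtd}) effectively restricts attention to $q > 2$; resolving this wording issue, rather than any computation, is where care is needed.
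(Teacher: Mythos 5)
Your proposal is correct and follows essentially the same route as the paper: the paper writes $q = \frac{t}{d}+1$ with $\frac{t}{d}$ in reduced form, scales to $t' = \alpha t$, $d' = \alpha d$ for a large integer $\alpha$, invokes the family of Theorem \ref{Gmn} so that $\fed(G) = \frac{t'}{d'}+1 = q$ while $\gamma(G) = \alpha(t-d)+1$, and then chooses $\alpha$ large enough that $q < \varepsilon\bigl(\alpha(t-d)+1\bigr)$ --- exactly your scaling argument with $\alpha$ in place of $k$, combined with the same appeal to the $H_y$-substitution for infiniteness. One remark: the $q=2$ boundary case you flag is a genuine issue, and it afflicts the paper's own proof identically, since in reduced form $q=2$ forces $t=d$, whence $\gamma(G) = \alpha(t-d)+1 = 1$ for every $\alpha$ and the final inequality $2 < \varepsilon$ fails for small $\varepsilon$; so the paper's argument, like yours, really establishes the corollary only for $q > 2$, and your instinct that the statement should be read with the strict inequality of Theorem \ref{Gtd} is sound.
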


\begin{proof}
Let $\alpha > \frac{1}{\varepsilon}$ be some sufficiently large integer, $q = \frac{t}{d}+1$ where $\frac{t}{d}$ is in reduced form,  $t' = \alpha t$, and $d' = \alpha d$.
 Consider the family of graphs $\mathcal{G}_q$ as constructed in Theorem \ref{Gmn}, with parameters $t'$ and $d'$ in place of $t$ and $d$, respectively.  Let $G \in \mathcal{G}_q$.  We have that $\fed(G) = \frac{t'}{d'}+1= \frac{t}{d}+1$ and $\gamma(G) = t'-d'+1 = \alpha(t-d)+1$.  If $\alpha$ is sufficiently large, then $\frac{t}{d}+1 < \varepsilon\left(\alpha(t-d)+1\right)$ as desired.
\end{proof}

Corollary \ref{unbounded1} also immediately implies that $\fed(G)$ may be an arbitrarily small fraction of $\med(G)$.  Furthermore, the construction of $\mathcal{G}_{m,n}$ in Theorem \ref{Gmn} shows that for any graph $H$ there exists a graph $G$ containing $H$ as an induced subgraph for which $\fed(G) < \varepsilon \gamma(G)$.

Finally, though the ratio $\fed/\gamma$ cannot be made unboundedly large, the difference $\fed(G) - \gamma(G)$ can be made arbitrarily large -- in the next section we show that $\fed(P_n) = \lceil\frac{n}{2}\rceil$, whereas $\gamma(P_n) = \lceil\frac{n}{3}\rceil$ for any $n \in \mathbb{N}$.


\section{Basic graph classes}\label{sec:basic}

For some specific classes of graphs, exact values of $\fed(G)$ are easy to compute (proofs are left to the reader).  To guard a complete graph, a weight of $1$ placed on any vertex is necessary and sufficient.  To guard a path, consider a sequence of attacks on a maximum independent set ``from left to right'' to see that weight $1$ on alternating vertices is necessary and sufficient.  For cycles, it is known (see, e.g., \cite{goddard2005eternal}) that $\med(C_n) = \lceil \frac{n}{3} \rceil$ and so $\fed(C_n) \leq \lceil \frac{n}{3} \rceil$.  To see that $\fed(C_n) \geq \lceil \frac{n}{3} \rceil$, note that total weight less than $\lceil \frac{n}{3} \rceil$ is not enough weight to respond to attacks on every third vertex, in order, around the cycle. 

\begin{prop}\label{prop:basicgraphs}
For any $n \in \mathbb{N}$:
	\begin{enumerate}
    	\item $\fed(K_n)=1$.
		\item $\fed(P_n)=\alpha(P_n)=\lceil \frac{n}{2} \rceil$.
        \item $\fed(C_n)=\gamma(C_n)=\lceil \frac{n}{3} \rceil$ if $n \geq 3$.
	\end{enumerate}
\end{prop}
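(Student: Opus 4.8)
The three equalities each split into a matching upper and lower bound, and all three upper bounds are immediate. By Proposition \ref{prop:basicbound} we have $\fed(G)\le\med(G)$, and the fundamental inequality chain gives $\med(G)\le\alpha(G)$; since $\alpha(K_n)=1$ and $\alpha(P_n)=\lceil n/2\rceil$, this yields $\fed(K_n)\le 1$ and $\fed(P_n)\le\lceil n/2\rceil$. For the cycle the estimate $\fed(C_n)\le\alpha(C_n)=\lfloor n/2\rfloor$ is too weak (e.g. for $C_6$), so I would instead combine $\fed\le\med$ with the quoted value $\med(C_n)=\lceil n/3\rceil$ to get $\fed(C_n)\le\lceil n/3\rceil$. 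Thus all of the content lies in the matching lower bounds, and the plan in each case is to perform a short sequence of attacks after which a family of pairwise disjoint ``demands'' must be satisfied simultaneously.

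For $K_n$ the lower bound is trivial: a single attack forces weight at least $1$ on the attacked vertex, so $\fed(K_n)\ge 1$. For $C_n$ I would choose the centres $v_2,v_5,\dots,v_{3\lfloor n/3\rfloor-1}$, whose closed neighbourhoods $\{v_{3i-2},v_{3i-1},v_{3i}\}$ are pairwise disjoint and cover $v_1,\dots,v_{3\lfloor n/3\rfloor}$; fractional domination forces weight at least $1$ in each, giving total weight at least $\lfloor n/3\rfloor$. If $3\mid n$ this already equals $\lceil n/3\rceil$ (this is just $\gamma_f(C_n)\le\fed(C_n)$). If $3\nmid n$, at least one vertex $u$ is left uncovered; attacking $u$ deposits a unit of weight concentrated at $u$, which is disjoint from all the chosen neighbourhoods, so the total is at least $\lfloor n/3\rfloor+1=\lceil n/3\rceil$. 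The key point is that the attack produces a unit that cannot be spread out, which is exactly what the static bound $\gamma_f(C_n)=n/3$ fails to detect.

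The path is the substantive case, since $\gamma_f(P_n)$ is only about $n/3$ and so no static argument can reach $\lceil n/2\rceil$; the one-step movement restriction must be used. Writing $V(P_n)=\{v_1,\dots,v_n\}$, I would attack the independent set $v_1,v_3,v_5,\dots$ from left to right and prove by induction on $j$ the invariant that, immediately after the attack on $v_{2j-1}$, the resulting configuration $w^{(j)}$ satisfies $w^{(j)}(\{v_1,\dots,v_{2j-1}\})\ge j$. The base case $j=1$ is just ``the attacked vertex has weight at least $1$''. For the inductive step I would use only two facts: a response moves each bit of weight at most one edge, and the freshly attacked vertex ends with weight at least $1$. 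The weight counted by the hypothesis, moving at most one step, stays within $\{v_1,\dots,v_{2j}\}$, while the response leaves weight at least $1$ at $v_{2j+1}$; since $v_{2j+1}\notin\{v_1,\dots,v_{2j}\}$ these two contributions are disjoint, so $w^{(j+1)}(\{v_1,\dots,v_{2j+1}\})\ge j+1$. Taking $j=\lceil n/2\rceil$ (the number of odd-indexed vertices) gives total weight at least $\lceil n/2\rceil$.

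The main obstacle is precisely this path bound: one must resist summing domination constraints (which only yields $\Theta(n/3)$) and instead exploit that weight moves one step at a time, so that each new attack deposits a unit provably disjoint from the weight already trapped in the growing prefix. The only points needing care are the indexing at the right end (so that $v_{2j+1}$ and the prefix $\{v_1,\dots,v_{2j}\}$ are handled correctly for both parities of $n$) and verifying that no previously counted weight can leak past $v_{2j}$ in a single step; both are routine once the invariant is in place.
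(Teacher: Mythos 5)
Your proof is correct, and for the complete graph and the path it matches the paper's (sketched) argument: the paper's hint for $P_n$ is exactly ``attack a maximum independent set from left to right,'' and your prefix invariant $w^{(j)}(\{v_1,\dots,v_{2j-1}\})\ge j$ is the right way to make that one-line hint rigorous, correctly exploiting the buffer vertex $v_{2j}$ so that trapped weight cannot reach the newly attacked vertex in one step. Where you genuinely diverge is the cycle lower bound: the paper argues dynamically, via a sequence of attacks on every third vertex in order around the cycle, whereas you use a static family of $\lfloor n/3\rfloor$ pairwise disjoint closed neighbourhoods plus, when $3\nmid n$, a single attack on an uncovered vertex. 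Your version is essentially an instance of the paper's own Proposition~\ref{prop:packing} (a $2$-packing whose closed neighbourhood misses a vertex forces $\fed\ge 1+|P|$), and it buys simplicity and robustness: only one attack needs to be analyzed, and the disjointness of the demands is immediate, whereas the paper's iterated-attack sketch requires tracking how weight redistributes over many rounds. Conversely, the paper's dynamic argument is the one that generalizes to settings where no good packing exists (as your path argument illustrates, packing bounds top out near $\gamma_f$, which is why the path genuinely needs the attack sequence). Your choices of upper bounds ($\fed\le\med\le\alpha$ for $K_n$ and $P_n$, and $\fed\le\med(C_n)=\lceil n/3\rceil$ for cycles, noting $\alpha$ is too weak there) are also sound and consistent with Proposition~\ref{prop:basicbound} and the paper's inequality chain.
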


\begin{prop}
If $G$ has a universal vertex or if every edge is a dominating edge of $G$, then  $\gamma^{\infty}_m(G) \leq 2$ and thus $\fed(G) \leq 2$.
\end{prop}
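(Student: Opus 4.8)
The plan is to prove the stronger statement $\med(G) \le 2$ in each of the two cases, and then deduce $\fed(G) \le 2$ immediately from Proposition~\ref{prop:basicbound}, which gives $\fed(G) \le \med(G)$. Since $\med(G)$ is the m-eternal domination number, it suffices in each case to exhibit a starting configuration of two guards together with a reconfiguration rule that responds to every attack while keeping the guards on a dominating set of size two.

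First I would treat the universal vertex case. Let $u$ be a universal vertex and maintain throughout the invariant that one of the two guards always occupies $u$; note that for any $w$ the set $\{u,w\}$ is dominating, since $N[u]=V(G)$. Suppose $v \notin D_i$ is attacked, and write $D_i = \{u,w\}$. Because $u$ is universal, $u \sim v$, so the guard on $u$ may move to $v$; simultaneously the guard on $w$ moves to $u$, which is legal since $w \sim u$. The resulting configuration $\{u,v\}$ is again dominating and restores the invariant, so the defender survives any infinite sequence of attacks with two guards.

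Next I would treat the case in which every edge is a dominating edge. Here the invariant is that the two guards occupy the two endpoints of an edge; such a configuration is dominating precisely because that edge is a dominating edge. Suppose $D_i = \{a,b\}$ (with $ab \in E(G)$) and $v \notin D_i$ is attacked. Since $ab$ is a dominating edge, $N[a]\cup N[b]=V(G)$, so $v$ is adjacent to $a$ or to $b$; assume without loss of generality $v \sim a$. I would move the guard from $a$ to $v$ and, simultaneously, the guard from $b$ to $a$ (legal since $a \sim b$). The new configuration is $\{v,a\}$, and since $v \sim a$ this is again an edge, hence a dominating edge, so the invariant is restored.

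I expect the only real subtlety---and hence the main step to verify carefully---to be the second case: after responding, the two guards must again sit on an edge so that the invariant (and thus domination) persists. A naive response that moves only the guard from $a$ to $v$ can leave the guards on the possibly non-adjacent pair $\{v,b\}$, which need not be dominating; the ``shift'' move $b \to a \to v$ is what guarantees the guards land on the edge $\{v,a\}$. One should also confirm the bookkeeping that these simultaneous moves are legal in the m-eternal model (each guard moves along an edge, the attacked vertex receives a guard, and the two final positions remain distinct), all of which is immediate.
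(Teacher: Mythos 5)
Your proof is correct, and since the paper states this proposition without proof (treating it as immediate), your argument is precisely the intended one: maintain a guard on the universal vertex (respectively, keep both guards on an edge) and use the ``shift'' response $b \to a \to v$ so the invariant persists. Your observation about the subtlety in the second case --- that moving only one guard could strand the pair on a non-edge --- is exactly the point the paper glosses over, and your handling of it is right; the only (degenerate) caveat is that the dominating-edge hypothesis implicitly requires $G$ to have at least one edge, which both you and the paper tacitly assume.
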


Since every edge in a complete multipartite graph is dominating, the following is easily obtained.

\begin{cor}
For all positive integers $n_1, n_2, \ldots, n_k$ where $n_k \geq 2$, $\fed(K_{n_1, n_2, \ldots, n_k})= 2$.
\end{cor}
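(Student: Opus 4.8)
The plan is to prove the two inequalities $\fed(K_{n_1,\ldots,n_k}) \geq 2$ and $\fed(K_{n_1,\ldots,n_k}) \leq 2$ separately, each as a direct application of a proposition already established. Throughout I take $k \geq 2$, so that the graph actually has edges and is complete multipartite in the usual sense; the condition $n_k \geq 2$ is what guarantees non-completeness.

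For the lower bound, I would argue that since $n_k \geq 2$, the part of size $n_k$ contains two vertices that are pairwise nonadjacent, so $K_{n_1,\ldots,n_k}$ is not a complete graph. Proposition \ref{prop:noncomplete} then yields $\fed(K_{n_1,\ldots,n_k}) \geq 2$ with no further work.

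For the upper bound, I would verify the hypothesis of the preceding proposition, namely that every edge of a complete multipartite graph is a dominating edge, and then invoke that proposition to conclude $\fed \leq 2$. Let $uv$ be an arbitrary edge, and let $A$ and $B$ be the parts containing $u$ and $v$. Since no two vertices in the same part are adjacent, we must have $A \neq B$, and hence $A \cap B = \emptyset$. Because each of $u, v$ is adjacent to every vertex outside its own part, $N[u] = \{u\} \cup (V \setminus A)$ and $N[v] = \{v\} \cup (V \setminus B)$, so $N[u] \cup N[v] \supseteq (V \setminus A) \cup (V \setminus B) = V \setminus (A \cap B) = V$. Thus $uv$ is dominating, and the preceding proposition gives $\fed(K_{n_1,\ldots,n_k}) \leq 2$.

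Combining the two bounds gives $\fed(K_{n_1,\ldots,n_k}) = 2$. There is no real obstacle here: the only point requiring a moment's thought is the observation that the endpoints of an edge necessarily lie in \emph{distinct} parts and that the complements of two disjoint parts together exhaust $V$; everything else is a straight citation of Proposition \ref{prop:noncomplete} and of the dominating-edge proposition immediately preceding the corollary.
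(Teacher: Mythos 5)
Your proposal is correct and takes essentially the same approach as the paper, which derives the corollary in one line from the observation that every edge of a complete multipartite graph is dominating (giving $\fed \leq 2$ via the preceding proposition) together with the non-completeness bound of Proposition \ref{prop:noncomplete} (giving $\fed \geq 2$). You have simply written out the two verifications the paper leaves implicit, including the correct remark that the endpoints of any edge lie in distinct parts so their closed neighbourhoods cover $V$.
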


Finally, we give the exact value of $\fed(G)$ when $G$ is a tree, noting that a linear time algorithm is given in \cite{KM09} which computes $\med(T)$ for any tree $T$.

\begin{prop}
	For any tree $T$, $\fed(T)=\med(T)$.
\end{prop}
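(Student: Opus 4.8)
The plan is to prove the two inequalities separately. The upper bound $\fed(T) \le \med(T)$ is immediate from Proposition~\ref{prop:basicbound}, so the whole content lies in establishing $\fed(T) \ge \med(T)$. Since $\med(T)$ is an integer, it suffices to design, for each tree $T$, a sequence of attacks that forces the (conserved) total weight of any eternally fractionally dominating family to be at least $\med(T)$.

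For the lower bound I would induct on $|V(T)|$, mirroring the recursive decomposition underlying the linear-time algorithm for $\med(T)$ in \cite{KM09}. Root $T$ and choose a deepest leaf $\ell$ with support vertex $u$. Attacking $\ell$ forces weight at least $1$ to be committed to $N[\ell] = \{\ell, u\}$; the base mechanism is exactly the one already used for paths in Proposition~\ref{prop:basicgraphs}(2), where attacking a maximum independent set ``from left to right'' forces a full unit near each attacked vertex. The goal is to show that this unit cannot simultaneously help defend the rest of the tree, so that after pruning the pendant region one is left with a smaller tree whose fractional eternal domination number has dropped by the committed amount, and the induction closes. Two facts drive the argument: total weight is invariant under reconfiguration (weight only moves, never appears or disappears), and in a single round weight can travel across only one edge.

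The hard part is ruling out fractional sharing across the cut vertex $u$: a priori the defender could keep only, say, weight $\tfrac{1}{2}$ on the pendant region and stream the remaining $\tfrac{1}{2}$ across $u$ from the rest of $T$ in each round, so that no full unit is ever ``locked in.'' To close this gap I would appeal to the flow characterization of reconfiguration via the network $N_{w_i,w_{i+1}}$ established above: every edge of a tree is a bridge, and the edge at $u$ induces a small cut separating the pendant region from the rest of $T$. By alternating attacks between the two sides of this bridge, the defender is forced to meet the local demand on each side using essentially only the weight already on that side, since the per-round capacity across the single bridge edge is too small to rebalance in time; a min-cut computation then forces the total weight to be at least the sum of the two sides' requirements. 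It is precisely the absence of cycles --- hence the bridge/separator structure --- that makes this exchange argument valid, and its failure for graphs with cycles is what allows $\fed(G) < \med(G)$ in general. Assembling these local bounds along the decomposition yields $\fed(T) \ge \med(T)$, completing the proof.
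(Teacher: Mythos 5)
Your overall skeleton matches the paper's: upper bound from Proposition~\ref{prop:basicbound}, induction on the order, a deepest leaf $\ell$ with support vertex $u$, pruning, and the claim that $\med$ drops by one. The genuine gap is in the step you yourself flag as ``the hard part.'' Your proposed resolution rests on a false premise: in this model there is no per-round capacity on edges. In the reconfiguration network $N_{w_1,w_2}$ every arc $ij'$ with $j \in N[i]$ has \emph{infinite} capacity; what constrains the defender is locality (weight can travel at most one edge per round), not throughput. So nothing prevents the defender from moving an arbitrarily large amount of weight across the bridge at $u$ in a single round, and no min-cut computation can yield the bound you want. Worse, the conclusion you aim for --- that the total weight must be at least the sum of the requirements of the two sides of a bridge --- is false as a general principle, even for trees: in the star $K_{1,n}$ every edge is a bridge, and each of the two components left by deleting a bridge needs total weight $1$ when defended on its own, yet $\fed(K_{1,n})=1$, because weight parked at the centre (the cut vertex) answers attacks on both sides. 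Alternating attacks across a bridge therefore cannot by itself force additivity; this is exactly the sharing phenomenon you set out to exclude, and your argument never excludes it.

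The paper's proof avoids flows altogether. The unit committed to the pendant region costs nothing to establish: every configuration must fractionally dominate the leaf, so $w(x)+w(y)\geq 1$ holds at \emph{every} time step --- no attack sequence and no cut argument are needed. The work goes into a case analysis at the deepest support vertex $x$, after first disposing of stars (precisely the case where a bridge argument collapses). If $x$ is adjacent to exactly one leaf $y$, one prunes \emph{both} $x$ and $y$, so the set $\{x,y\}$ carrying the committed unit is removed wholesale and cannot be double-counted; if $x$ is adjacent to two or more leaves, attacking two of them in succession forces weight at least $2$ in $N[x]$, and one prunes those leaves. In each case the pruned tree $T'$ satisfies $\med(T')=\med(T)-1$, and the induction closes. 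To repair your write-up, replace the min-cut step by these observations: the lower bound for trees comes from closed-neighbourhood constraints that sit inside the pruned part, not from any limit on how much weight can cross an edge.
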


\begin{proof}
	We prove this by induction on $n$, the order of the tree. The cases $n=1, 2$ are clear. Let $T$ be a tree of order $n>2$ and suppose the proposition is true for all trees of order strictly less than $n$. If $T$ is a star, then the result follows. Suppose $T$ is not a star. From Proposition \ref{prop:basicbound} we see that it is sufficient to show $\fed(T) \geq \gamma_{\rm m}^\infty(T)$. Recall that the eccentricity of a vertex is the maximum distance from that vertex to all others.  Since $T$ is not a star, $T$ contains a vertex of eccentricity at least $2$ which is adjacent to at least one leaf.  Let $x$ be such a vertex with the largest eccentricity. If $x$ is adjacent to exactly one leaf $y$, then a weight summing to at least $1$ must be maintained on $x$ and $y$ at all time and the tree $T'$ obtained by deleting $x$ and $y$ satisfies $\gamma_f^\infty(T')=\gamma_{\rm m}^\infty(T')=\gamma_{\rm m}^\infty(T)-1$. If $x$ is adjacent to at least two leaves, then a weight summing to at least $2$ must be maintained in the closed neighbourhood of $x$ at all time and the tree $T'$ obtained by deleting these leaves satisfies $\gamma_f^\infty(T')=\gamma_{\rm m}^\infty(T')=\gamma_{\rm m}^\infty(T)-1$. In both cases, the result follows.
\end{proof}

\subsection{Split Graphs}

Recall that a graph is called a \textit{split graph} if its vertex set can be partitioned into a clique and an independent set. 

\begin{thm}
If $G$ is a split graph, then $\gamma_f(G) \leq \fed(G) \leq 1 + \gamma_f(G)$.
\end{thm}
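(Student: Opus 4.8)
The statement to prove is: if $G$ is a split graph, then $\gamma_f(G) \leq \fed(G) \leq 1 + \gamma_f(G)$.

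The lower bound $\gamma_f(G) \leq \fed(G)$ is already established (Proposition on basic bounds). So I only need the upper bound $\fed(G) \leq 1 + \gamma_f(G)$.

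Split graph: $V = C \cup I$ where $C$ is a clique, $I$ is an independent set.

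**Key structural observations about split graphs.**

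1. The clique $C$ is very "well-connected" — any two vertices in $C$ are adjacent, so weight can move freely within $C$.

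2. For vertices in $I$: each $y \in I$ has $N[y] = \{y\} \cup (\text{some subset of } C)$.

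3. In the example $\mathcal{G}_{t,d}$, the key insight was: vertices in $I$ need never hold weight (their closed neighborhoods are dominated by weight on $C$), and there's an "extra" weight of 1 floating around.

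**The strategy — the "+1" idea.**

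The bound $1 + \gamma_f(G)$ strongly suggests: take a minimum fractional dominating function $w$ of weight $\gamma_f(G)$, and add an extra weight of 1 to respond to attacks.

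Let me think about how responses work.

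**Proposed guarding strategy.**

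Let $w$ be an optimal fractional dominating function, weight $\gamma_f(G)$.

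Key claim: We may assume WLOG that $w$ assigns weight only to $C$ (all weight on the clique).

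Why? Because for any $y \in I$, $N[y] = \{y\} \cup (N(y) \cap C)$. If $y$ had weight, we could... hmm, need to be careful. Actually the standard fact: in a split graph, there's an optimal fractional dominating function supported on $C$.

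Reasoning: Suppose $y \in I$ has weight $w(y) > 0$. Consider moving this weight to a neighbor in $C$. For this to preserve domination, I need every vertex whose closed neighborhood contained $y$ to still be dominated. The vertices dominated by $y$ are $N[y] = \{y\} \cup (N(y) \cap C)$. If $N(y) \cap C \neq \emptyset$, pick $c \in N(y) \cap C$ and move $w(y)$ to $c$. Then:
- $y$ itself: was getting weight partly from itself; now $y$ gets weight from $c$ (since $c \in N(y)$, so $c \in N[y]$). Still covered.
- Any $c' \in N(y) \cap C$ that relied on $y$: now relies on $c$ instead, and $c \in C$ so $c$ is adjacent to $c'$ (or $c = c'$), hence $c \in N[c']$. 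Still covered.

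So yes, moving weight from $I$ to $C$ preserves domination (assuming isolated vertices in $I$ don't occur; if $y$ is isolated, $N[y]=\{y\}$ and $w(y) \geq 1$ is forced — handle separately, or note such $y$ must be handled but this is a degenerate case).

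Let me assume (after this reduction) that $w$ is supported on $C$, with total weight $\gamma_f(G)$.

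**The two-function / flow approach.**

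Following the style of the paper (which sets up the reconfiguration network and the LP), I'd want to exhibit a collection of fractional dominating functions of common weight $1 + \gamma_f(G)$, each reconfigurable to any other, and such that each vertex can be "raised to 1."

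Concretely: Define the base $w$ supported on $C$, weight $\gamma_f(G)$. Now I add a movable "bump" of weight 1.

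For each vertex $v$, define $w_v = w + \mathbf{1}_v$ (add 1 to vertex $v$), but I need each $w_v$ to be a valid $S$-fractional dominating function with $S = 1 + \gamma_f(G)$ and $w_v(v) \geq 1$. Since $w$ already dominates, adding weight anywhere keeps it dominating, and $w_v(v) = w(v) + 1 \geq 1$. Good, each has weight exactly $1 + \gamma_f(G)$.

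**Reconfigurability — the main obstacle.**

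The crux is: can $w_u$ be reconfigured to $w_v$ for any $u, v$? I.e., move the "bump" from $u$ to $v$ while keeping everything a valid fd-function of the same total weight.

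Case A: $u, v \in C$. Move the bump of 1 directly from $u$ to $v$ (they're adjacent). Trivial. Intermediate/resulting function is still $w$ + bump at $v$. Fine.

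Case B: $v \in I$ (attack on an independent-set vertex). I need $w_v(v) \geq 1$, i.e. vertex $v$ ends with weight $\geq 1$. Starting from $w_u$ (bump somewhere), move weight to put 1 on $v$. Since $v \in I$, $v$'s weight can only come from $N[v] = \{v\} \cup (N(v) \cap C)$. I want to route the bump (currently on clique, say) to $v$.

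Here's the delicate part: after placing 1 on $v$, is the remaining configuration still dominating? If I pull weight off $C$ to feed $v$, I may break domination of other $I$-vertices.

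This is exactly the concern flagged in the paper's commented-out remark ("I am also concerned if we would have flow to send weight from neighbourhood of $z$ to neighbourhood of $x$ or $y$..."). So this IS the main obstacle.

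**How I'd resolve the obstacle.**

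The clean resolution: keep the base $w$ FIXED on $C$ at all times, and only move the extra weight-1 bump. The bump lives on $C$ (in the "resting" states $w_c$, $c \in C$) or on a single vertex of $I$ (in states $w_v$, $v \in I$).

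The invariant: at all times, every vertex $c \in C$ retains its base weight $w(c)$, and there is an additional free unit either sitting on some clique vertex or pushed out to one $I$-vertex. Because the base $w$ (supported on $C$) alone already dominates $G$ with weight $\gamma_f(G)$, domination is maintained no matter where the extra unit sits — we never need to remove base weight from $C$. The extra unit only ever needs to travel: clique $\to$ clique (free, clique is complete), or clique $\to$ attacked $I$-vertex (one edge, since the unit can first be gathered on any single clique vertex adjacent to $v$...

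Wait — subtlety: to push the unit to $v \in I$, I need the unit on a clique vertex adjacent to $v$. If the unit sits on $c' \in C$ with $c' \not\in N(v)$, first slide it within $C$ to some $c \in N(v) \cap C$ (exists iff $v$ is non-isolated), then across edge $cv$. And after the attack, to return to a resting state, slide it back from $v$ to a clique neighbor.)

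So the plan:

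First, reduce to $w$ supported on $C$ (handle isolated $I$-vertices as forced-weight leaves separately, or observe they force the trivial bound).

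Second, define the $n$ functions $w_v = w + \mathbf{1}_v$; each has weight $1+\gamma_f(G)$ and satisfies $w_v(v)\ge 1$.

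Third, verify reconfigurability by describing the flow/weight-movement: keep base fixed, route the single extra unit through the clique and out one edge to the target. Invoke the earlier Lemma (reconfiguration $\Leftrightarrow$ existence of an $s$–$t$ flow of the right value) to make this rigorous, OR just describe the movement directly since it's transparent.

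Fourth, conclude: this collection witnesses that $G$ can be eternally $(1+\gamma_f(G))$-fractionally dominated, so $\fed(G) \leq 1 + \gamma_f(G)$.

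**Expected main obstacle.**

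The genuine difficulty is making sure that routing weight to an independent-set vertex does not destroy domination of the other independent-set vertices. The trick that makes it work is decoupling the fixed base (on $C$) from the mobile unit, so that the base alone always dominates and the unit is free to roam. Verifying that the base can be taken to live entirely on $C$ (the reduction in Step 1) is the one place requiring an actual argument; isolated vertices of $I$ are the only edge case and must be acknowledged.
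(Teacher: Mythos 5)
Your overall architecture (trivial lower bound, plus a base fractional dominating function supported on the clique together with a mobile extra unit of weight $1$) is the same as the paper's, and your explicit reduction showing the base weight may be assumed to live entirely on the clique is a point the paper only states implicitly. However, there is a genuine gap in your reconfiguration step: the invariant ``the base weight on $C$ never moves; only the extra unit travels'' cannot be maintained legally. In this model a parcel of weight moves at most one edge per round, and the defender gets no free moves between attacks. So consider the state reached after defending an attack on $y \in I$ (the extra unit now sits on $y$, and it must remain there until the next attack --- your ``slide it back to a clique neighbour'' afterwards is not a legal move), followed by an attack on a vertex $u \notin N[y]$, for instance another independent vertex or a clique vertex not adjacent to $y$. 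The unit on $y$ can only reach $N[y]$ in one round, so it cannot supply $u$; the only weight adjacent to $u$ is base weight, which your invariant forbids touching. The same defect already appears in your Case B when the unit sits on $c' \in C$ with $c' \notin N(v)$: your fix --- slide the unit within $C$ to some $c \in N(v)$ and then across the edge $cv$ --- is a two-edge move of a single parcel inside one response, which the model does not permit.

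The repair is precisely what the paper's proof does: the base weight \emph{must} move, in a simultaneous one-round relay. When $u$ is attacked, $N(u)$ sends base weight totalling $1$ (available by the domination constraint at $u$) to $u$, and the extra unit replenishes $N(u)$ --- directly if the unit is on a clique vertex, since that vertex is adjacent to all of $N(u)$, and via a longer relay if the unit sits on an independent vertex $z$: simultaneously $z$ refills $N(z)$, $N(z)$ ships total weight $1$ across the clique to refill $N(u)$, and $N(u)$ feeds $u$. Each parcel moves exactly one edge, and the result is again ``base $w$ on the clique plus one unit on the attacked vertex.'' Equivalently, you could carry out your suggestion of invoking the flow lemma by checking that the maximum $s$--$t$ flow in $N_{w_u,w_v}$ attains the full total weight for every pair $u,v$; but the witnessing flow is this relay, not the bump-only transfer your write-up describes, so as it stands the key step of your argument fails.
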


\begin{proof}
The lower bound is trivial (Proposition \ref{prop:basicbound}), and so we need only to prove the upper bound.  Let $X \cup Y$ be a partition of $V(G)$ for which $X$ is a clique and $Y$ is an independent set.  Begin with a fractional dominating function on $G$, say $w$; note that this is precisely a solution to $\left\{\sum_{u \in N(y)} w(u) \geq 1 \,\mid\, y \in Y\right\}$. Assign an additional weight $1$ to some arbitrary vertex.  We show that, after any attack, we can maintain the fractional dominating function $w$ with some arbitrary vertex receiving an additional weight of $1$; $z$ denotes this special vertex throughout.  First, suppose the attack occurs at $x \in X$.  If $z \in X$, then $z$ sends one to $x$.  If $z \in Y$, then $N(z)$ sends a total weight of $1$ to $x$, and $z$ redistributes its weight of $1$ to $N(z)$ so that $w$ is restored.  Suppose, then, that the attack occurs in $Y$.  If $z \in X$, then $N(y)$ sends its total weight of $1$ to $y$, and $z$ sends its weight of $1$ to $N(y)$ to restore $w$.  If $z \in Y$, then $N(y)$ sends a total weight of $1$ to $y$, $N(z)$ sends a total weight of $1$ to $N(y)$ to restore $w$ on those vertices (note that if $N(y) \cap N(z) \neq \emptyset$, then some vertex may send weight to itself), and $z$ sends its weight of $1$ to $N(z)$ to restore $w$ on those vertices.
\end{proof}

Recall the definition of $F(G)$ from the Section \ref{sec:behaviour}:
$F(G) = \max\{f(v): v \in V\}$ where, 
for each vertex $v \in V(G)$, the quantity $f(v)$ is the least total weight of a fractional dominating function in which the
weight assigned to vertex $v$ is at least 1.

\begin{prop}
Let $G$ be a split graph.  Then $\gamma_f^\infty(G) = F(G)$.
\label{dominate_split}
\end{prop}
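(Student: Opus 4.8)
The lower bound $\fed(G) \ge F(G)$ is precisely the earlier proposition bounding $\fed$ below by $F$, so the entire content is the reverse inequality $\fed(G) \le F(G)$, which I would prove by exhibiting a collection of $n$ fractional dominating functions $w_1,\dots,w_n$, each of total weight $W := F(G)$, that defends $G$. My first step is to record a Hall-type reformulation of reconfigurability. Combining the reconfiguration lemma (the flow characterization via $N_{w_1,w_2}$) with max-flow/min-cut, a finite cut corresponds to a choice of vertex set $A \subseteq V$ on the source side --- each infinite-capacity arc $ij'$ with $j \in N_G[i]$ forces $N[A]$ to the source side --- and its capacity is $\big(W - w_1(A)\big) + w_2(N[A])$, writing $w(A) = \sum_{u \in A} w(u)$. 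Hence $w_1$ can be reconfigured to $w_2$ if and only if $w_2(N[A]) \ge w_1(A)$ for every $A \subseteq V$. This inequality is the workhorse of the argument.

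Next I would exploit the split structure. Fix a partition $V = X \cup Y$ with $X$ a clique and $Y$ independent, and --- treating any isolated vertex separately, as it merely carries a fixed unit of weight forever --- assume every $y \in Y$ has a neighbour in $X$. The key structural observation is that weight sitting on $y \in Y$ may be pushed to any neighbour $x \in N(y) \subseteq X$ without changing the total weight or violating domination: the only closed neighbourhoods containing $y$ are $N[y]$ and the $N[u]$ with $u \in N(y)\subseteq X$, each of which also contains $x$, so every affected constraint sum is unchanged while all others can only increase. Iterating, for each vertex $v$ I may choose an optimal loaded function $D_v$ (of weight $f(v) \le W$ with $D_v(v) \ge 1$) supported on $X$ when $v \in X$, and of the form $D_v = \psi_v + \mathbf{1}_v$ with $\psi_v$ supported on $X$ and $D_v(v)=1$ when $v \in Y$. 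Padding the $X$-part up to total weight exactly $W$ (which only helps domination) yields the functions $w_v$: each is a fractional dominating function of weight $W$ with $w_v(v)\ge 1$, whose only weight on $Y$ is a single unit at $v$, and only when $v \in Y$.

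Finally I would verify all-pairs reconfigurability, i.e.\ $w_j(N[A]) \ge w_i(A)$ for all $i,j,A$. Since $w_i(A) \le w_i(V) = W$ always, the cases with $A \cap X \neq \emptyset$ are immediate once one notes that $A \cap X \neq \emptyset$ forces $N[A] \supseteq X$ (clique), so $w_j(N[A]) \ge w_j(X)$. The only delicate subcase is when $w_j$ has ``spent'' its unit off the clique (so $j \in Y$) and $j \notin N[A]$, making $w_j(N[A])$ as small as $W-1$. Here $A \subseteq V \setminus N[j]$, so $A \cap X \subseteq X \setminus N(j)$; since $w_i$ places no weight on $j$ yet still dominates it, $w_i(N(j)) \ge 1$, which forces $w_i(X \setminus N(j)) \le W-1$ and hence $w_i(A) \le W-1$, exactly closing the gap. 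The remaining case $A \subseteq Y$ follows directly, since $w_i(A)\le 1$ with equality only if $i \in A$, whence $N[A] \supseteq N[i]$ and $w_j(N[A]) \ge w_j(N[i]) \ge 1$. With reconfigurability established, the collection $\{w_v\}$ defends $G$ at total weight $W = F(G)$, giving $\fed(G) \le F(G)$ and, with the lower bound, equality. I expect this last verification --- in particular the ``wasted unit'' subcase --- to be the main obstacle, as it is exactly the point where the precise budget $F(G)$, rather than something larger, must be shown to suffice.
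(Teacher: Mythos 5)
Your proof is correct, and it builds exactly the same defending family as the paper does: for each vertex $v$, a minimum-weight fractional dominating function loaded at $v$, normalized (by pushing weight from the independent set into the clique) so that all weight lies on $X$ except possibly a single unit at $v$ itself when $v \in Y$, and implicitly brought to the common total weight $F(G)$. Where you genuinely diverge is in how reconfigurability of this family is certified. The paper argues directly: to pass from $D_u$ to $D_w$ it has $u$ send its weight to its neighbours, has $w$ pull weight $1$ from its neighbours, and redistributes the remaining clique weight ``per $D_w$ (which is possible)'' --- that parenthetical being precisely the step that is asserted rather than proved. You instead combine the paper's flow lemma with max-flow/min-cut to extract a Hall-type criterion --- $w_1$ can be reconfigured to $w_2$ if and only if $w_2(N[A]) \ge w_1(A)$ for every $A \subseteq V$ --- and then verify this by cases; your ``wasted unit'' subcase ($j \in Y$, $j \notin N[A]$) is exactly the content hidden in the paper's parenthetical, and you close it cleanly from the fact that $w_i$ dominates $j$ without placing weight on it. Your route costs more machinery but buys rigor, and it also makes explicit two details the paper glosses over: padding all functions to the same total weight $F(G)$ (required before any reconfiguration statement makes sense), and isolated vertices of $Y$, for which the paper's ``send weight to neighbours'' step is vacuous. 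One microscopic touch-up: in the delicate subcase, your sentence ``$w_i$ places no weight on $j$'' presumes $i \neq j$; the case $i = j$ is immediate, since then $j \notin A$ forces $w_j(A) \le w_j(X) = F(G) - 1$, so nothing is lost.
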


\begin{proof}
Let $G$ be a split graph.
Let $X \cup Y$ be a partition of $V$ for which $X$ is a clique and $Y$ is an independent set.

We will show that $G$  can be eternally fractionally dominated by $n$ f.d.-functions.
For each  $v \in V$, let $D_v$ be a  fractional dominating set of $G$ of least total weight in which the
weight assigned to vertex $v$ is at least 1.
Since any excess weight assigned to $v$ can be arbitrarily redistributed to its neighbours in $X$, 
without loss of generality every vertex in $Y$, except $v$ if $v \in Y$,  has weight 0 in $D_v$.  

Consider the fractional dominating functions $D_u$ and $D_w$.
To reconfigure $D_u$ to $D_w$, simultaneously: 
(i) the vertex $u$ sends its weight to its neighbours; 
(ii)  the vertex $w$ receives weight 1 from its neighbours;
(iii) the weights assigned to vertices in $X$ and neither used in (ii) nor assigned in (i) are redistributed per $D_w$ (which is possible).

Thus, the set $D_u$ can be reconfigured to $D_w$  for all $u, w \in V$.
Hence $G$ can be fractionally eternally dominated by $n$ f.d.-functions, each of total weight $F(G)$.
Therefore $\gamma_f^\infty = F(G)$, and the proof is complete.
\end{proof}

\begin{cor}
If $G$ is a split graph, then $\gamma_f^\infty$
and a strategy for eternally fractionally dominating $G$ can be computed in polynomial time.
Further, $\gamma_f^\infty$ is a rational number.
\label{comp_efd_split}
\end{cor}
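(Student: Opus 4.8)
The plan is to observe that this corollary is essentially a direct consequence of the two preceding results, and that the only substantive point is to notice that it is the \emph{proof} of Proposition \ref{dominate_split}, rather than merely its statement, which supplies the hypothesis required to invoke Proposition \ref{nsets}. So the first thing I would do is recall the construction carried out in the proof of Proposition \ref{dominate_split}: for each vertex $v \in V(G)$ we chose a fractional dominating function $D_v$ of least total weight in which $v$ carries weight at least $1$, and we verified that any $D_u$ can be reconfigured to any $D_w$ via the explicit three-step weight redistribution (send $u$'s weight to its neighbours, pull weight $1$ into $w$, and rebalance the remaining weight on the clique $X$ according to $D_w$). Since every $D_v$ has the same total weight $F(G) = \gamma_f^\infty(G)$, this is precisely the statement that $G$ can be eternally fractionally dominated by $n$ f.d.-functions, with $n = |V(G)|$.

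Having isolated that fact, I would then apply Proposition \ref{nsets} verbatim. That proposition asserts that whenever $G$ can be eternally fractionally dominated by $n$ f.d.-functions, both $\gamma_f^\infty(G)$ and an eternal fractional dominating strategy can be computed in polynomial time (by solving the linear program $A$, whose constraint count is polynomial in $n$ and which therefore is solvable in polynomial time), and moreover that $\gamma_f^\infty(G)$ is rational since it is the optimum of a linear program with rational data. Because split graphs satisfy the hypothesis of Proposition \ref{nsets}, all three conclusions of the corollary follow immediately.

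The only point requiring care—and the place where I expect a reader to get tripped up rather than the place of genuine difficulty—is the bookkeeping observation that Proposition \ref{dominate_split} gives more than the numerical equality $\gamma_f^\infty(G) = F(G)$: its argument certifies the stronger structural property that $G$ is eternally fractionally dominated by exactly $n$ functions that are pairwise mutually reconfigurable. Once this is made explicit, there is no further computation to perform; the proof is simply the composition ``Proposition \ref{dominate_split} (proof) $\Rightarrow$ hypothesis of Proposition \ref{nsets} $\Rightarrow$ Proposition \ref{nsets} (conclusion).'' I would therefore write the proof in two short sentences, the first pointing to the collection $\{D_v : v \in V(G)\}$ produced above and the second invoking Proposition \ref{nsets}.
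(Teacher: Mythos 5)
Your proposal is correct and is exactly the paper's intended argument: the paper states the corollary without proof, relying on the fact that the proof of Proposition \ref{dominate_split} explicitly establishes that a split graph can be eternally fractionally dominated by $n$ f.d.-functions (via the mutually reconfigurable collection $\{D_v : v \in V(G)\}$), after which Proposition \ref{nsets} yields all three conclusions. Your observation that the hypothesis of Proposition \ref{nsets} comes from the \emph{proof} of Proposition \ref{dominate_split} rather than its statement is precisely the right bookkeeping point.
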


By contrast, the problem of deciding whether a given Hamiltonian split graph has eternal domination number at most a given integer $k$ is NP-complete \cite{BDEMY17}.

We say that a vertex $v$ in a graph $G$ is \textit{fractionally domination-critical} (or f.d.-critical for short) if $\gamma_f(G - v) < \gamma_f(G)$.  We define a vertex $v$ to be \textit{fully f.d.-critical} if $\gamma_f(G)- \gamma_f(G - v) = 1$.  Note that ``fully'' is not a vacuous addition; if $G=\mathcal{G}_{t,d}$ for $t,d$, then $G$ is an example of a graph containing at least one f.d.-critical vertex $v$ for which $\gamma_f(G) - \gamma_f(G - v) < 1$.


\begin{prop}
Let $G$ be a split graph which is not complete.
Let $X \cup Y$ be a partition of $V$ such that $X$ is a clique and $Y$ is an independent set.
Then $\gamma_f^\infty = \gamma_f$ if and only if 
every vertex in $X$ is adjacent to a vertex in $Y$ and
every vertex in $Y$ is fully f.d.-critical.
\end{prop}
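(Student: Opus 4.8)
The plan is to reduce the biconditional to a vertex-by-vertex computation of the quantities $f(v)$. By Proposition~\ref{dominate_split} we have $\gamma_f^\infty(G) = F(G) = \max_{v \in V} f(v)$, and since any function witnessing $f(v)$ is in particular a fractional dominating function, $f(v) \ge \gamma_f(G)$ for every $v$. Hence $\gamma_f^\infty(G) = \gamma_f(G)$ if and only if $f(v) = \gamma_f(G)$ for \emph{every} $v \in V$, and I would prove the statement by characterizing exactly when this holds, treating $v \in Y$ and $v \in X$ separately. Throughout I would use the standard split-graph reduction from the proof of Proposition~\ref{dominate_split}: weight sitting on a vertex $y \in Y$ can be pushed to any neighbour of $y$ in $X$ without changing the total weight or destroying domination, so in any optimum we may assume all weight lies on $X$, apart from a forced unit.

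First I would handle $y \in Y$ by proving the identity $f(y) = 1 + \gamma_f(G-y)$. The inequality $f(y) \le 1 + \gamma_f(G-y)$ is immediate: extend an optimum of $G-y$ by placing weight $1$ on $y$, which self-dominates $y$ and leaves every other closed neighbourhood only larger. For the reverse inequality I would take an optimum for $f(y)$, use the reduction to assume $w(y)=1$ with the remaining weight on $X$, and restrict to $G-y$; the only delicate point is that the $X$-neighbours of $y$ lose the unit they were receiving from $y$, but the covering constraints of the other independent-set vertices (or, if $Y=\{y\}$, a vertex of $X$ non-adjacent to $y$, which exists since $G$ is not complete) force $w(X) \ge 1$, so those neighbours remain dominated. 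This identity gives $f(y)=\gamma_f(G)$ exactly when $\gamma_f(G-y)=\gamma_f(G)-1$, i.e. when $y$ is fully f.d.-critical (recall $\gamma_f(G-y) \ge \gamma_f(G)-1$ always, by extending an optimum of $G-y$ with a unit on $y$).

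Next I would analyze $x \in X$. If $x$ has no neighbour in $Y$, then $x$ lies in no covering constraint of an independent-set vertex, so the constraint $w(x)\ge 1$ simply wastes a unit and $f(x) = 1 + \gamma_f(G) > \gamma_f(G)$; this shows that $\gamma_f^\infty(G)=\gamma_f(G)$ forces every vertex of $X$ to have a neighbour in $Y$. Conversely, if $x$ has a neighbour $y_1 \in Y$ that is fully f.d.-critical, then by the previous paragraph there is a minimum fractional dominating function of weight $\gamma_f(G)$ with $w(y_1)=1$ and the rest on $X$; pushing this unit from $y_1$ to $x$ along the edge $xy_1$ (a redistribution that preserves domination, exactly as above) produces a minimum function with $w(x)\ge 1$, so $f(x)=\gamma_f(G)$.

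Finally I would assemble the two directions. For sufficiency, if every $y\in Y$ is fully f.d.-critical and every $x\in X$ has a neighbour in $Y$, the $Y$-analysis gives $f(y)=\gamma_f(G)$ for all $y$, and the $X$-analysis (each $x$ has a fully f.d.-critical $Y$-neighbour) gives $f(x)=\gamma_f(G)$ for all $x$, so $F(G)=\gamma_f(G)$. For necessity, $f(v)=\gamma_f(G)$ for all $v$ forces every $y\in Y$ to be fully f.d.-critical and, by the no-$Y$-neighbour computation, every $x\in X$ to have a neighbour in $Y$. I expect the main obstacle to be the lower bound in the identity $f(y)=1+\gamma_f(G-y)$---specifically, guaranteeing that deleting $y$ together with its forced unit of weight still leaves the former neighbours of $y$ dominated---along with the bookkeeping of degenerate cases (isolated vertices of $Y$, or $|Y|=1$), which is precisely where the hypothesis that $G$ is not complete enters.
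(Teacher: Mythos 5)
Your proposal is correct and follows essentially the same route as the paper: both reduce the problem via Proposition~\ref{dominate_split} to deciding when $f(v)=\gamma_f(G)$ for every vertex $v$, and both use the same weight manipulations --- extending/restricting an optimal function across deletion of a vertex $y \in Y$ (your identity $f(y)=1+\gamma_f(G-y)$), sliding a unit from a fully f.d.-critical $y$ to an $X$-neighbour, and observing that a unit on a vertex of $X$ with no $Y$-neighbour is wasted. The ``isolated vertices of $Y$'' bookkeeping you leave open is glossed over in exactly the same way by the paper's own proof (its ``without loss of generality every vertex in $Y$ has weight $0$'' and its claim that $D_y-\{y\}$ dominates $G-y$ both require a non-isolated vertex of $Y\setminus\{y\}$, or a vertex of $X$ not adjacent to $y$, to force weight onto $X$), so your attempt is no less complete than the published argument.
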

\begin{proof}
Note that, since $G$ is not complete, the set $Y \neq \emptyset$.

Suppose $\gamma_f^\infty = \gamma_f \geq 2$.  
We will show that every vertex in $X$ is adjacent to a vertex in $Y$ and
every vertex in $Y$ is fully f.d.-critical.

First suppose there exists a vertex $x \in X$ such that $N(x) \cap Y = \emptyset$. 
Let $D_x$ be a fractional eternal dominating set of $G$ in which the weight assigned to $x$ is at least 1.
Without loss of generality, all vertices in $Y$ are assigned weight 0 in $D_x$.
But then the restriction of $D_x$ to $G-x$ is a fractional dominating set of $G-x$, and hence of $G$ (since $x \in X$), 
contrary to the hypothesis that $\gamma_f^\infty = \gamma_f$.
Therefore, every vertex in $X$ is adjacent to a vertex in $Y$. 

Let $y \in Y$.
Then, for any vertex $y \in Y$ there exists a fractional (eternal) dominating set $D_y$ in which the weight 
assigned to $y$ is 1 (any excess weight assigned to $y$ can be arbitrarily redistributed to its neighbours).
Since $N(y) \subseteq X$, the set $D_y - \{y\}$ is a fractional dominating set of $G - y$, it follows that $y$ is fully f.d.-critical.

We now prove the converse.
Now suppose every vertex in $X$ is adjacent to a vertex in $Y$ and every vertex in $Y$ is fully f.d.-critical.
Then, for any $y \in Y$ there exists a fractional dominating set in which the weight assigned to $y$ equals $1$.  To see this, take a fractional dominating set of $G - y$ and extend it to a fractional dominating set of $G$ by assigning weight $1$ to $y$.  This is a fractional dominating set of $G$ with total weight $\gamma_f$ since $y$ is fully f.d.-critical.
For any $x \in X$ there exists a fractional dominating set in which the weight assigned to $x$ is at least 1 -- let $y$ in $N(x) \cap Y$ and the fractional dominating set as above with weight 1 assigned to $y$, add the weight assigned to $y$ to the weight assigned to $x$, and assign $y$ weight 0.  
Thus $\gamma_f(G) = F(G)$, and the result follows from Proposition \ref{dominate_split}.
\end{proof}



\subsection{Packings and Strongly Chordal Graphs}

A \textit{distance-$2$ vertex packing} (henceforth referred to as a \textit{$2$-packing}) of a graph $G$ is a set $P \subseteq V(G)$ such that the distance from $x$ to $y$ is at least $3$ for every distinct $x,y \in P$.  Clearly, $\fed(G) \geq \max\{|P| \,:\, P \textup{ is a $2$-packing in $G$}\}$.  

\begin{prop} \label{prop:packing}
If $P$ is a $2$-packing in a graph $G$ such that $N[P] \subsetneq V(G)$, then $\fed(G) \geq 1 + |P|$.
\end{prop}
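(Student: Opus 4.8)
The plan is to reduce the statement to the earlier lower bound $\fed(G) \geq F(G)$ and then to produce a single vertex $v$ for which the quantity $f(v)$ is already at least $1 + |P|$. Since $N[P] \subsetneq V(G)$, I would first fix a vertex $v \in V(G) \setminus N[P]$; the whole argument hinges on the fact that such a $v$ lies outside \emph{every} closed neighbourhood $N[p]$ with $p \in P$. It then suffices to show that any fractional dominating function $w$ with $w(v) \geq 1$ has total weight at least $1 + |P|$, since this gives $f(v) \geq 1 + |P|$ and hence $\fed(G) \geq F(G) \geq f(v) \geq 1 + |P|$.

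The core is a disjointness computation. Writing $P = \{p_1, \ldots, p_k\}$, I would consider the $k+1$ sets $\{v\}, N[p_1], \ldots, N[p_k]$ and argue they are pairwise disjoint: the sets $N[p_i]$ are pairwise disjoint because $P$ is a $2$-packing (if some vertex lay in both $N[p_i]$ and $N[p_j]$ then $p_i$ and $p_j$ would be at distance at most $2$, contradicting the defining distance-$3$ condition), and $v$ lies in none of them precisely because $v \notin N[P] = \bigcup_j N[p_j]$. Now $w$ being a fractional dominating function forces $\sum_{x \in N[p_j]} w(x) \geq 1$ for each $j$, while the attack-type constraint gives $w(v) \geq 1$. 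Summing these $k+1$ inequalities over the disjoint sets bounds the total weight from below by $1 + k = 1 + |P|$.

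I do not expect a genuine obstacle here; the only thing requiring care is conceptual rather than technical, namely isolating exactly where the extra ``$+1$'' over the trivial bound $\fed(G) \geq |P|$ comes from. The point is that a vertex $v \notin N[P]$ forces a unit of weight that cannot be ``shared'' with any of the packing neighbourhoods, so its contribution is genuinely additive. I would double-check that the hypothesis $N[P] \subsetneq V(G)$ is used in full (it is exactly what guarantees the existence of $v$) and that I invoke the correct constraint on $w$, i.e.\ both the domination inequalities at the $p_j$ and the requirement $w(v)\geq 1$. As an alternative to citing $\fed(G) \geq F(G)$, one can phrase the same computation dynamically: an attack at $v$ must leave a response configuration with $w(v) \geq 1$, and the disjointness shows no total weight below $1 + |P|$ can meet this while maintaining fractional domination.
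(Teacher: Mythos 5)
Your proof is correct and is essentially the paper's own argument: the paper likewise considers an attack on a vertex $v \in V(G) \setminus N[P]$ and observes that the resulting fractional dominating function must carry weight at least $1$ on $v$ and weight at least $1$ on each of the pairwise disjoint closed neighbourhoods $N[p]$, $p \in P$, forcing total weight at least $1 + |P|$. Your packaging through $f(v)$ and the bound $\fed(G) \geq F(G)$, together with the explicit verification that the sets $\{v\}, N[p_1], \ldots, N[p_k]$ are pairwise disjoint, simply spells out what the paper's two-sentence proof leaves implicit.
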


\begin{proof}
    Let $v \in V(G) \setminus N[P]$.  Suppose $v$ is attacked.  Then the resulting fractional dominating function
    must have weight at least one on $v$, and weight at least one in the neighbourhood of each vertex in the 2-packing.
\end{proof}

A dominating set $D$ is called \textit{efficient} if $|N[v] \cap D| = 1$ for every $v \in V(G)$.  A dominating set $D$ is \textit{near-efficient} if there exists a vertex $v \in V(G) \setminus D$ such that $D$ is an efficient dominating set in $G-v$.

\begin{cor}\label{packcor1}
If $G$ has an efficient dominating set, then $\fed(G) \geq \gamma(G)$.
\end{cor}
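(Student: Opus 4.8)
The plan is to show that any efficient dominating set is itself a $2$-packing, so that the already-recorded lower bound $\fed(G) \geq \max\{|P| : P \text{ is a } 2\text{-packing in } G\}$ applies with $P$ taken to be this set. First I would fix an efficient dominating set $D$ of $G$. The defining property $|N[v] \cap D| = 1$ for every $v \in V(G)$ says, in particular, that every vertex lies in exactly one closed neighbourhood $N[d]$ with $d \in D$; equivalently, the family $\{N[d] : d \in D\}$ partitions $V(G)$.

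The key observation is that this partition property forces the closed neighbourhoods of distinct elements of $D$ to be pairwise disjoint. Indeed, if $d_1, d_2 \in D$ were distinct with a common vertex $u \in N[d_1] \cap N[d_2]$, then $u$ would be dominated by two elements of $D$, giving $|N[u] \cap D| \geq 2$ and contradicting efficiency. Disjointness of $N[d_1]$ and $N[d_2]$ is exactly the statement that $d_1$ and $d_2$ are at distance at least $3$, so $D$ is a $2$-packing.

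It then remains only to combine two trivial inequalities. Since $D$ is a $2$-packing, the $2$-packing lower bound gives $\fed(G) \geq |D|$. Since $D$ is a dominating set, $|D| \geq \gamma(G)$ directly from the definition of the domination number. Chaining these yields $\fed(G) \geq |D| \geq \gamma(G)$, as required.

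I do not anticipate a genuine obstacle here: the only non-bookkeeping step is recognising that efficiency of a dominating set automatically makes it a $2$-packing, after which the conclusion is a one-line application of the $2$-packing bound already established in the text. One could alternatively invoke the standard fact that efficient dominating sets are minimum, so $|D| = \gamma(G)$; but the weaker inequality $|D| \geq \gamma(G)$, coming for free from $D$ being a dominating set, is all that the argument actually needs.
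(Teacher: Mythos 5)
Your proposal is correct and follows what is essentially the paper's intended argument: the paper states this corollary without proof, immediately after observing that $\fed(G) \geq \max\{|P| : P \text{ is a $2$-packing in } G\}$, and the expected derivation is exactly yours --- efficiency forces the closed neighbourhoods of members of $D$ to be pairwise disjoint, so $D$ is a $2$-packing, whence $\fed(G) \geq |D| \geq \gamma(G)$. Your remark that the trivial bound $|D| \geq \gamma(G)$ (rather than the standard equality $|D| = \gamma(G)$ for efficient dominating sets) suffices is a nice simplification, and you correctly invoke the basic $2$-packing bound rather than Proposition~\ref{prop:packing}, whose hypothesis $N[P] \subsetneq V(G)$ an efficient dominating set never satisfies.
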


\begin{cor}\label{packcor2}
If the maximum possible value of $|P|$ is $\gamma(G)$, then $\fed(G) \geq \gamma(G)$.
\end{cor}

Since all strongly chordal graphs satisfy the conditions of Corollary \ref{packcor2} (see \cite{farber1984domination}), we obtain:

\begin{cor}
If $G$ is a strongly chordal graph, then $\fed(G) \geq \gamma(G)$.
\end{cor}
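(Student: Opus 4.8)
The plan is to reduce the statement directly to Corollary \ref{packcor2} by invoking the packing--domination duality that holds in strongly chordal graphs. Recall that Corollary \ref{packcor2} asserts $\fed(G) \geq \gamma(G)$ whenever the maximum size of a $2$-packing in $G$ equals $\gamma(G)$. Thus the only thing that needs to be established is that strongly chordal graphs have this packing property.

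First I would recall the elementary inequality $\rho(G) \leq \gamma(G)$, valid for every graph, where $\rho(G)$ denotes the maximum cardinality of a $2$-packing. This holds because, for a $2$-packing $P$, the closed neighbourhoods $\{N_G[x] : x \in P\}$ are pairwise disjoint (any two vertices of $P$ are at distance at least $3$), and any dominating set must place a vertex in each $N_G[x]$ in order to dominate $x$; disjointness then forces the dominating set to have size at least $|P|$.

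Next I would supply the reverse inequality as the key external input: Farber \cite{farber1984domination} showed that strongly chordal graphs satisfy $\gamma(G) = \rho(G)$ (indeed this domination--packing equality, holding hereditarily, is one of the characterizations of strong chordality). Combining this with the general bound above gives that the maximum value of $|P|$ over all $2$-packings is exactly $\gamma(G)$, which is precisely the hypothesis of Corollary \ref{packcor2}. Applying that corollary yields $\fed(G) \geq \gamma(G)$, completing the argument.

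The proof is essentially a one-line deduction once the packing characterization is in hand, so there is no genuine combinatorial obstacle internal to this corollary. The only point requiring care is the correct invocation of Farber's result: one must cite the equality $\gamma = \rho$ for strongly chordal graphs rather than re-derive it, since establishing that equality from scratch requires the strong elimination ordering machinery and is well outside the scope of what this corollary intends to prove.
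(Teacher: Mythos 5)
Your proof is correct and follows exactly the route the paper takes: cite Farber's result that strongly chordal graphs satisfy the packing--domination equality $\rho(G) = \gamma(G)$, and then apply Corollary \ref{packcor2} to conclude $\fed(G) \geq \gamma(G)$. The only difference is that you spell out the elementary direction $\rho(G) \leq \gamma(G)$, which the paper leaves implicit.
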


We will revisit the notion of $2$-packings in Sections \ref{ssec:Cprism} and \ref{ssec:Mprism}.

\subsection{Kneser Graphs}

Finally, we consider Kneser graphs.  The Kneser graph $KG_{n,k}$ is the graph whose vertex set consists of all $k$-subsets of an $n$-set and where vertices are adjacent if and only if they are disjoint.  We determine the exact value of $\fed(KG_{n,k})$ for the case when $k=2$; the value of $\fed(G)$ for the Petersen graph, which is $KG_{5,2}$,  follows as a special case.  We start with the following theorem.

\begin{thm}\label{thm:petersen}
For $\fed(KG_{5,2}) = 3$.
\end{thm}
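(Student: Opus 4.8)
The plan is to prove $\fed(KG_{5,2}) \ge 3$ and $\fed(KG_{5,2}) \le 3$ separately, exploiting throughout that $KG_{5,2}$ is the Petersen graph: it is $3$-regular, strongly regular with girth $5$ (adjacent vertices have no common neighbour, so every neighbourhood is independent) and with each pair of non-adjacent vertices sharing a unique common neighbour, it has diameter $2$, and it is distance-transitive. I would fix a vertex $v$, call its neighbours $a,b,c$, and set $W = V \setminus N[v]$, the six vertices at distance $2$ from $v$.

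For the lower bound I would invoke the bound $\fed(G) \ge F(G)$ and compute $F(KG_{5,2})$. Suppose $v$ is attacked, so the resulting fractional dominating function $w$ has $w(v) \ge 1$. The structural facts I need are that each vertex of $W$ has exactly two neighbours inside $W$ and exactly one among $\{a,b,c\}$ (using $\mu = 1$), while each of $a,b,c$ has exactly two neighbours in $W$ (using that $a,b,c$ are pairwise non-adjacent, since otherwise there would be a triangle through $v$). Summing the six constraints $\sum_{x \in N[p]} w(x) \ge 1$ over $p \in W$ then counts each vertex of $W$ three times and each of $a,b,c$ twice, giving $3P + 2Q \ge 6$, where $P$ and $Q$ are the total weights on $W$ and on $\{a,b,c\}$; minimizing $P + Q$ subject to this (and nonnegativity) yields $P + Q \ge 2$. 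Since $W \cup \{a,b,c\}$ is disjoint from $v$, the total weight is at least $w(v) + P + Q \ge 3$, so $F(KG_{5,2}) = 3$ and hence $\fed(KG_{5,2}) \ge 3$. (The same incidence count shows $W$ induces a $6$-cycle, which is the clean way to see the extremal function realizing $F = 3$.)

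For the upper bound I would exhibit an explicit strategy of weight $3$. For each vertex $v$ let $D_v$ assign weight $1$ to $v$, weight $\tfrac13$ to each vertex of $W = V \setminus N[v]$, and $0$ to the three neighbours of $v$. Each $D_v$ is a fractional dominating function of total weight $3$ with $D_v(v) = 1$, so it remains to show that from any $D_u$ one can respond to an attack at any $v$ by reconfiguring to $D_v$. By the reconfiguration lemma this is a flow/Hall feasibility check, and since the Petersen graph is distance-transitive it suffices to verify two representative ordered pairs: $u,v$ adjacent, and $u,v$ at distance $2$. In each case I would tabulate the surplus and deficit $D_u(x) - D_v(x)$ and route surplus to deficit along single edges; for adjacent $u = \{1,2\}$, $v = \{3,4\}$ one sends the unit off $u$ to its three neighbours and the two leftover thirds from $\{1,5\}, \{2,5\}$ into $v$, and the distance-$2$ case is entirely analogous. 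This gives $\fed(KG_{5,2}) \le 3$ and completes the proof.

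The step I expect to be the main obstacle is the reconfiguration check for the upper bound: one must confirm that the intended redistribution can actually be realized by flow along edges for every attacked vertex, not merely that the source and sink totals agree. Distance-transitivity is exactly what makes this manageable, collapsing all ordered pairs $(u,v)$ into the two distance classes; without that symmetry the verification would be unwieldy. A secondary point to state carefully is that the lower bound genuinely relies on $W \cup \{a,b,c\}$ being disjoint from the attacked vertex $v$, which is why the contribution $w(v) \ge 1$ adds cleanly on top of the bound $P + Q \ge 2$.
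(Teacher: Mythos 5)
Your proposal is correct and takes essentially the same route as the paper: the lower bound is the identical double count of the six closed-neighbourhood constraints of the non-neighbours of the attacked vertex (each non-neighbour counted three times, each neighbour twice, giving total weight at least $3$), and the upper bound uses the identical family of configurations with weight $1$ on the attacked vertex and $\tfrac{1}{3}$ on each of its six non-neighbours, verified case by case on the two distance classes. The only cosmetic difference is your routing in the adjacent case, where you split the unit from $u$ among its three neighbours and push the two leftover thirds directly into $v$ by single-edge moves, while the paper slides the full unit from $u$ to $v$ and shifts the leftover thirds along two disjoint paths of length two; both redistributions are valid one-round moves.
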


\begin{proof}
	Suppose the $n$-set is $\{1, 2, 3, 4, 5\}$ and without loss of generality $w(\{1, 2\}) \geq 1$. Let $N_1$ and $N_2$ be respectively the set of all neighbours and non-neighbours of $\{1, 2\}$. Since the sum of the weights in the closed neighbourhood of each vertex of $N_2$ is greater than or equal to $1$, we obtain $\sum_{v \in N_2} \sum_{u \in N[v]} w(u) \geq 6$. In the preceding sum, the weight on each vertex of $N_2$ is repeated $3$ times and the weight on each vertex of $N_1$ is repeated $2$ times. So we have: $3 \times \sum_{v \in N_1 \cup N_2} w(v) + 3 \times w(\{1, 2\}) \geq 6 + 3 \implies 3 \times \sum_{v \in V} w(v) \geq 9 \implies \sum_{v \in V} w(v) \geq 3$. Now, it remains to prove $\fed(KG_{5,2}) \leq 3$. To this end, we first place a weight of $1$ on the vertex $\{1, 2\}$ and a weight of $\frac{1}{3}$ on each vertex of $N_2$. To respond to an attack on a vertex of $N_1$ (say $\{3, 4\}$ without loss of generality), we move the weight of $1$ from $\{1, 2\}$ to $\{3, 4\}$ and the weights from $N(\{3, 4\}) \backslash N(\{1, 2\})$ to $N(\{1, 2\}) \backslash N(\{3, 4\})$ along the following disjoint paths: $\{1, 5\}-\{2, 4\}-\{3, 5\}$ and $\{2, 5\}-\{1, 3\}-\{4, 5\}$. If a vertex of $N_2$ is attacked (say $\{1, 3\}$ without loss of generality), we move the weights from $\{2, 4\}$ and $\{2, 5\}$ to it and we share the weight of $1$ on the vertex $\{1, 2\}$ equally among the vertices $\{1,2\}, \{3,4\}$ and $\{3,5\}$.
\end{proof}

\begin{thm}\label{thm:kneser}
For every integer $n \geq 6$, $\fed(KG_{n,2}) = 1+\frac{n-2}{n-4}=2+\frac{2}{n-4}=\frac{2n-6}{n-4}$.
\end{thm}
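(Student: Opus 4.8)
The plan is to establish both a lower bound and a matching upper bound on $\fed(KG_{n,2})$, mirroring the structure of the proof of Theorem~\ref{thm:petersen} but handling the general $n \geq 6$ case. First I would set up notation: fix the ground set $[n]$, assume without loss of generality that $w(\{1,2\}) \geq 1$, and partition the remaining vertices into $N_1$, the neighbours of $\{1,2\}$ (the $2$-subsets disjoint from $\{1,2\}$, i.e. contained in $\{3,\ldots,n\}$), and $N_2$, the non-neighbours other than $\{1,2\}$ itself (the $2$-subsets meeting $\{1,2\}$ in exactly one element). A routine count gives $|N_1| = \binom{n-2}{2}$ and $|N_2| = 2(n-2)$, and I would record the degree of each vertex, namely $\binom{n-2}{2}$, since two disjoint pairs have $\binom{n-2}{2}$ common-disjoint completions.

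For the lower bound I would run the same double-counting argument as in the Petersen case. Summing the domination constraint $\sum_{u \in N[v]} w(u) \geq 1$ over all $v \in N_2$ gives a lower bound of $|N_2| = 2(n-2)$ on the left-hand side. The key is to compute, for a fixed ground vertex $u$, how many times $w(u)$ is counted, i.e.\ $|N[u] \cap N_2|$; this multiplicity will differ according to whether $u \in \{1,2\}$-type, lies in $N_1$, or lies in $N_2$, and I expect these multiplicities to be controllable linear-in-$n$ quantities. Combining the resulting inequality with $w(\{1,2\}) \geq 1$ and the total-weight term should yield $\sum_{v \in V} w(v) \geq \frac{2n-6}{n-4}$ after clearing denominators. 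The delicate point is that the multiplicities over $N_1$ and over $N_2$ need not be equal (unlike the symmetric $n=5$ case where everything collapsed), so I would either (a) choose the summation set cleverly so that each weight is counted a uniform number of times, or (b) carry the two different multiplicities through the algebra and verify the bound still closes out exactly at $\frac{2n-6}{n-4}$; this bookkeeping is where I expect the main effort to go.

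For the upper bound I would exhibit an explicit family of fractional eternal dominating functions of total weight $\frac{2n-6}{n-4}$ and a reconfiguration strategy, generalizing the weighting in Theorem~\ref{thm:petersen}. The natural candidate is to place weight $1$ on one distinguished vertex together with a uniform weight $c = \frac{1}{n-4}$ (or a similarly chosen constant) on an appropriate symmetric set of vertices so that the total is $\frac{2n-6}{n-4}$ and every closed-neighbourhood constraint is met with equality or slack. I would then describe how to respond to an attack in $N_1$ versus an attack in $N_2$: an attack on a neighbour is handled by sliding the unit weight along the edge and rebalancing the uniform weights between $N(\{1,2\}) \setminus N(\text{attacked})$ and its complement along disjoint augmenting paths, while an attack on a non-neighbour is absorbed by collecting weight from the attacked vertex's neighbours and redistributing the displaced unit. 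Rather than verify flow feasibility path-by-path, I would invoke Lemma~3.2 (the flow characterization of reconfiguration) and argue that the required flows exist by a symmetry/Hall-type argument on the Kneser graph's regular structure.

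The hard part will be the reconfiguration argument for the upper bound when $n$ is large: unlike the Petersen graph, I cannot simply list disjoint paths, so I will need a clean structural lemma guaranteeing that the necessary weight transfers are realizable as $s$--$t$ flows in the reconfiguration network $N_{w_1,w_2}$, most likely leveraging the vertex-transitivity of $KG_{n,2}$ and the uniformity of the proposed weighting so that only a constant number of distinct transfer patterns need to be checked. A secondary obstacle is confirming that the same double-counting lower bound is genuinely tight, i.e.\ that the inequalities used are all simultaneously saturated by the proposed weighting, which serves as a consistency check that the constant $c$ was chosen correctly.
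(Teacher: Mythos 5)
Your skeleton --- a double-counting lower bound over the non-neighbours of $\{1,2\}$, then an explicit symmetric weighting defended by a Hall-type matching argument --- is exactly the structure of the paper's proof, and your option~(b) for the lower bound together with your regularity/Hall idea for reconfiguration is what the paper actually does. However, the one detail you left open in the upper bound conceals a genuine trap. You propose ``generalizing the weighting in Theorem~\ref{thm:petersen}''; in that theorem the uniform weight sits on the \emph{non-neighbours} $N_2$, and if you generalize that choice you are forced to take the uniform weight to be at least $\frac{1}{n-2}$ (each vertex of $N_2$ has exactly $n-3$ neighbours inside $N_2$, so its closed neighbourhood within $N_2$ captures $n-2$ copies of the uniform weight), giving total weight $1 + 2(n-2)\cdot\frac{1}{n-2} = 3$, which strictly exceeds $\frac{2n-6}{n-4}$ for every $n \geq 7$. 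The paper instead places the uniform weight on the \emph{neighbours} $N_1$, with value $\frac{1}{\binom{n-3}{2}}$ --- not your guessed $\frac{1}{n-4}$, which matches neither placement. With that choice every vertex of $N_2$ has exactly $\binom{n-3}{2}$ weighted neighbours, so all constraints are tight, and the total is $1 + \binom{n-2}{2}/\binom{n-3}{2} = 1+\frac{n-2}{n-4}$. Once the weighting is right, your reconfiguration plan goes through as in the paper: the bipartite graph between $N(\{1,2\})\setminus N(v)$ and $N(v)\setminus N(\{1,2\})$ is regular, hence has a perfect matching by Hall's Theorem, and the displaced uniform weights slide along it.

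For the lower bound, the bookkeeping you deferred does close out, and it is worth recording the numbers so you can see where $n \geq 6$ enters. Summing the constraints over $N_2$ counts each $N_2$-weight exactly $n-2$ times (itself plus its $n-3$ neighbours in $N_2$), each $N_1$-weight exactly $2(n-4)$ times, and $w(\{1,2\})$ zero times, so $(n-2)\sum_{v\in N_2}w(v) + 2(n-4)\sum_{v\in N_1}w(v) \geq |N_2| = 2(n-2)$. Since $n-2 \leq 2(n-4)$ precisely when $n \geq 6$, both multiplicities can be bounded above by $2(n-4)$, giving $\sum_{v \neq \{1,2\}} w(v) \geq \frac{n-2}{n-4}$ and hence total weight at least $1+\frac{n-2}{n-4} = \frac{2n-6}{n-4}$. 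So the asymmetry of multiplicities is absorbed by a one-line inequality rather than by any cleverer choice of summation set; your option~(a) is unnecessary.
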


\begin{proof}
    Suppose the $n$-set is $\{1, 2, ..., n\}$ and without loss of generality $w(\{1, 2\}) \geq 1$. Let $N_1$ and $N_2$ be respectively the set of all neighbours and non-neighbours of $\{1, 2\}$. Since the sum of the weight in the closed neighbourhood of each vertex of $N_2$ is greater than or equal to $1$, we obtain $\sum_{v \in N_2} \sum_{u \in N[v]} w(u) \geq 2(n-2)$.  Consider a vertex $\{a,b\}$ in $N_2$.  Note that $\{1,2\}$ and $\{a,b\}$ share precisely one element; without loss of generality suppose $a=1$.  There are $n-3$ sets which are disjoint from $\{1,b\}$ and intersect $\{1,2\}$.  Thus the weight of $\{1,b\}$ (and similarly for every vertex in $N_2$) is counted $n-2$ times in the preceding sum.  Now consider a vertex $\{a,b\} \in N_1$.  The number of sets which intersect $\{1,2\}$ but do not intersect $\{a,b\}$ is $2(n-4)$, and thus the weight on each vertex of $N_1$ is repeated $2(n-4)$ times. So we have: $2(n-4)(\sum_{v \in N_1} w(v) + \sum_{v \in N_2} w(v)) \geq 2(n-2) \implies \sum_{v \in N_1} w(v) + \sum_{v \in N_2} w(v) \geq \frac{n-2}{n-4} \implies \fed(KG_{n,2}) \geq 1+\frac{n-2}{n-4}$. 
    
    Now, it remains to prove $\fed(KG_{n,2}) \leq 1+\frac{n-2}{n-4}$. To this end, we first place a weight of $1$ on the vertex $\{1, 2\}$ and a weight of $\frac{1}{{n-3 \choose 2}}$ on each vertex of $N_1$.  This fractionally dominates the graph since, for every set $S$ which intersects $\{1,2\}$ (aside from $\{1,2\}$ itself), there are ${n-3 \choose 2}$ ways to construct a set which is disjoint from both that set and $\{1,2\}$.  We will show that, after any attack, we may reconfigure the fractional dominating set to have weight $1$ on the attacked vertex, weight $\frac{1}{{n-3 \choose 2}}$ on each neighbour of the attacked vertex, and weight $0$ on all other vertices.
    
    To respond to an attack on a vertex of $N_1$ (say $\{3, 4\}$ without loss of generality), we first move a weight of $\frac{{n-3 \choose 2}-1}{{n-3 \choose 2}}$ from $\{1, 2\}$ to $\{3, 4\}$.  The subgraph induced by the edges with one end in $N(\{1, 2\}) \backslash N(\{3, 4\})$ and the other in $N(\{3, 4\}) \backslash N(\{1, 2\})$ is a regular bipartite graph.  By Hall's Theorem there is a perfect matching between $N(\{1, 2\}) \backslash N(\{3, 4\})$ and $N(\{3, 4\}) \backslash N(\{1, 2\})$, and so the weights from $N(\{1, 2\}) \backslash N(\{3, 4\})$ can be moved to $N(\{3, 4\}) \backslash N(\{1, 2\})$.  Suppose now that a vertex of $N_2$ is attacked (say $\{1, 3\}$ without loss of generality).  Move the total weight of $1$ in the neighbourhood of that vertex to it, and share the weight of $1$ on the vertex $\{1, 2\}$ equally among its ${n-3 \choose 2}$ common neighbours with $\{1, 3\}$.  Now, the vertices of $N(\{1, 2\}) \backslash N(\{1, 3\})$ are the sets of the form $\{3,x\}$ where $x \in \{4,\ldots,n\}$.  Similarly, the vertices of $N(\{1, 3\}) \backslash N(\{1, 2\})$ are the sets of the form $\{2,x\}$ where $x \in \{4,\ldots,n\}$.  The edges with one end in each set induce a regular bipartite graph so, as above, the vertices in $N(\{1, 2\}) \backslash N(\{1, 3\})$ can pass their weights to the vertices in $N(\{1, 3\}) \backslash N(\{1, 2\})$ along a perfect matching.  In either case, we finish with a fractional dominating set with weight $1$ on the attacked vertex, weight $\frac{1}{{n-3 \choose 2}}$ on each neighbour of the attacked vertex, and weight $0$ on all other vertices.
\end{proof}

\section{Connectivity and Cayley graphs}\label{sec:Cayley}

The main results in the section are focused on $\fed(G)$ for Cayley graphs.  To begin, we examine the relationship between the vertex connectivity of a graph $G$ (denoted $\kappa(G)$), the degrees of its vertices, and $\fed(G)$.  Recall that $\delta(G)$ and $\Delta(G)$ denote the minimum and maximum degree of $G$, respectively.

The following lemma follows from results in \cite{domke1988fractional} and \cite{grinstead1990fractional}.

\begin{lem}\label{degreebounds}
If $G$ is an $n$-vertex graph with $\delta(G) = \delta$ and $\Delta(G) = \Delta$, then $\frac{n}{\Delta+1} \leq \gamma_f(G) \leq \frac{n}{\delta+1}$.
\end{lem}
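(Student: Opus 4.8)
The plan is to prove each inequality by exhibiting an appropriate feasible solution to the relevant linear program, since $\gamma_f(G)$ is the optimum of a covering LP and its dual is a packing LP.

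For the upper bound $\gamma_f(G) \leq \frac{n}{\delta+1}$, the natural approach is to write down an explicit feasible fractional dominating function and bound its total weight. The cleanest choice is the uniform weighting $w(v) = \frac{1}{\delta+1}$ for every $v \in V(G)$. To check feasibility, observe that for any vertex $v$ we have $\sum_{x \in N[v]} w(x) = \frac{|N[v]|}{\delta+1} = \frac{\deg(v)+1}{\delta+1} \geq \frac{\delta+1}{\delta+1} = 1$, since every closed neighbourhood has size at least $\delta+1$. Hence $w$ is a fractional dominating function, and its total weight is $\sum_{v} w(v) = \frac{n}{\delta+1}$, which gives $\gamma_f(G) \leq \frac{n}{\delta+1}$ immediately.

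For the lower bound $\frac{n}{\Delta+1} \leq \gamma_f(G)$, the plan is to use LP duality: the dual of the fractional domination LP is the \emph{fractional domatic}/\emph{fractional $2$-packing} problem, which asks to maximize $\sum_v y_v$ subject to $\sum_{x \in N[v]} y_x \leq 1$ for all $v$ and $y_v \geq 0$. By weak LP duality, any feasible dual solution lower-bounds $\gamma_f(G)$. I would take the uniform dual weighting $y_v = \frac{1}{\Delta+1}$: for each $v$, the constraint reads $\sum_{x \in N[v]} y_x = \frac{|N[v]|}{\Delta+1} = \frac{\deg(v)+1}{\Delta+1} \leq \frac{\Delta+1}{\Delta+1} = 1$, so $y$ is dual-feasible, and its value is $\frac{n}{\Delta+1} \leq \gamma_f(G)$. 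Alternatively, one can avoid invoking duality explicitly: for any fractional dominating function $w$, summing the $n$ domination constraints gives $n \leq \sum_v \sum_{x \in N[v]} w(x) = \sum_x w(x)\,|N[x]| \leq (\Delta+1)\sum_x w(x)$, since each $w(x)$ is counted once for every vertex in its closed neighbourhood, i.e. $|N[x]| = \deg(x)+1 \leq \Delta+1$ times; rearranging yields $\sum_x w(x) \geq \frac{n}{\Delta+1}$, and taking the infimum over all such $w$ gives the bound.

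Neither direction presents a genuine obstacle — both reduce to the elementary observation that $\delta+1 \leq |N[v]| \leq \Delta+1$ for every vertex. The only point requiring a little care is the double-counting identity $\sum_v \sum_{x \in N[v]} w(x) = \sum_x w(x)\,|N[x]|$ used in the lower bound, which holds because $x \in N[v] \iff v \in N[x]$ (closed neighbourhood is symmetric). Since the statement attributes the result to \cite{domke1988fractional} and \cite{grinstead1990fractional}, I would present the short self-contained argument above rather than merely citing, as both inequalities follow in a couple of lines from the uniform weightings and the degree bounds on closed neighbourhood sizes.
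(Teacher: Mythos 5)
Your proposal is correct. There is no in-paper proof to compare against: the paper simply states that the lemma ``follows from results in \cite{domke1988fractional} and \cite{grinstead1990fractional}'' and moves on, so your self-contained two-line argument --- the uniform weighting $w(v) = \frac{1}{\delta+1}$ for the upper bound, and either weak LP duality with the uniform dual solution $y_v = \frac{1}{\Delta+1}$ or the equivalent double-counting of the $n$ domination constraints for the lower bound --- is precisely the standard derivation that those references contain, and it is a reasonable choice to include it rather than defer to a citation. Both feasibility checks are right, and the counting identity $\sum_v \sum_{x \in N[v]} w(x) = \sum_x w(x)\,|N[x]|$ is correctly justified by the symmetry $x \in N[v] \iff v \in N[x]$. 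One pedantic remark: the step $\sum_x w(x)\,|N[x]| \leq (\Delta+1)\sum_x w(x)$ requires $w(x) \geq 0$, which you use implicitly; the paper's displayed definition of a fractional dominating function only says $w \colon V(G) \to \R$, but non-negativity is clearly intended (it appears explicitly in the LP of Section~\ref{sec:linprog}), so your argument is consistent with the paper's conventions --- it would just be worth stating the non-negativity hypothesis when you invoke it.
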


This lemma also inspires the following result on $\fed(G)$, which gives a general bound on $\fed(G)$ depending on the order of $G$ and its connectivity.

\begin{thm}\label{conn}
If $G$ has connectivity $\kappa$, then $\fed(G) \leq \frac{n + \kappa}{\kappa+1}$.
\end{thm}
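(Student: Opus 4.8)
The plan is to exhibit an explicit eternal strategy of total weight $\frac{n+\kappa}{\kappa+1}$ built around a single maintained invariant. I would assign to every vertex a \emph{base} weight $\frac{1}{\kappa+1}$ and, on one distinguished vertex $z$, an additional weight $\frac{\kappa}{\kappa+1}$, so that $z$ carries weight exactly $1$. The total weight is then $1 + (n-1)\frac{1}{\kappa+1} = \frac{n+\kappa}{\kappa+1}$. Since $\delta(G)\ge\kappa(G)=\kappa$, for every vertex $u$ we have $\sum_{x\in N[u]}\frac{1}{\kappa+1} = \frac{\deg(u)+1}{\kappa+1}\ge\frac{\kappa+1}{\kappa+1}=1$, so the base alone is a fractional dominating function, and hence so is the whole configuration. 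The goal is to show that this family of configurations is closed under responding to attacks, which immediately yields the bound.

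First I would reduce a single attack to one reconfiguration. When vertex $v$ is attacked, the aim is to pass from the current configuration $w_1$ (weight $1$ on the old distinguished vertex $z$, base elsewhere) to the configuration $w_2$ (weight $1$ on $v$, base elsewhere); both have the same total weight, $w_2(v)=1$, and $w_2$ fractionally dominates $G$. By the flow characterization of reconfiguration (the first lemma of Section~\ref{sec:linprog}), it suffices to produce an $s$–$t$ flow of value $\frac{n+\kappa}{\kappa+1}$ in $N_{w_1,w_2}$; equivalently, by max-flow--min-cut (the deficiency form of Hall's theorem), it suffices to verify $w_2(S)\le w_1(N[S])$ for every $S\subseteq V$, where $w(S)$ denotes the total weight on $S$.

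Next I would check this cut condition. Because $w_1$ and $w_2$ coincide off $\{z,v\}$ and $S\subseteq N[S]$, the inequality $w_2(S)\le w_1(N[S])$ is immediate unless $v\in S$ while $z\notin N[S]$; in that case it reduces to $|N[S]\setminus S|\ge\kappa$. This is exactly where connectivity enters: since $z\in V\setminus N[S]$, the set $V\setminus N[S]$ is nonempty and $S\ne\emptyset$, so $N(S)\setminus S$ is a vertex cut separating $S$ from $V\setminus N[S]$, and $\kappa$-connectivity forces $|N(S)\setminus S|\ge\kappa$. Equivalently, one may exhibit the flow directly: by Whitney's theorem there are $\kappa$ internally disjoint $z$–$v$ paths, and pushing weight $\frac{1}{\kappa+1}$ along each path as a conveyor belt — every internal vertex forwarding its base weight while being replenished from its predecessor — is a legal one-hop move that delivers $\frac{\kappa}{\kappa+1}$ to $v$ and returns $z$ to its base weight.

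The main obstacle is precisely this reconfiguration step, and its subtlety is worth flagging: weight moves only one hop per round, so a single $z$–$v$ route has bandwidth capped at the $\frac{1}{\kappa+1}$ held by its internal vertices, which is why $\kappa$ disjoint routes — and hence $\kappa$-connectivity — are needed to relocate the surplus $\frac{\kappa}{\kappa+1}$. Once feasibility of one reconfiguration is established, a routine induction on the sequence of attacks maintains the invariant forever, so the strategy defends $G$ with total weight $\frac{n+\kappa}{\kappa+1}$ and therefore $\fed(G)\le\frac{n+\kappa}{\kappa+1}$.
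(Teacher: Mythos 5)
Your proof is correct and takes essentially the same approach as the paper: the paper also maintains the invariant of weight $1$ on a distinguished vertex and $\frac{1}{\kappa+1}$ on every other vertex, and answers an attack with exactly your conveyor-belt shift of $\frac{1}{\kappa+1}$ along $\kappa$ internally disjoint paths guaranteed by $\kappa$-connectivity. Your additional verification of the cut condition $w_2(S) \le w_1(N[S])$ via max-flow--min-cut is a sound alternative justification of the reconfiguration step, but the direct path-pushing argument you also give is precisely the paper's proof.
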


\begin{proof}
Let $w(x)$ denote the weight of a vertex $x$.  Begin by weighting an arbitrary vertex $y$ with $1$ and all other vertices $\frac{1}{\kappa+1}$.  Clearly $\sum_{u \in N[v]}w(u) \geq (\delta+1)\frac{1}{\kappa+1} \geq 1$ for every vertex $v \in V(G)$.  We give a simple strategy to show that we can maintain a weighting where the attacked vertex receives weight $1$ while all others have $\frac{1}{\kappa+1}$.  If a vertex other than $y$ is attacked, say $z$, choose $\kappa$ internally disjoint $yz$-paths $P_1, \ldots, P_\kappa$.  For each $P_i = yv_1v_2\cdots v_tz$, each $v_i$ will send weight $\frac{1}{\kappa+1}$ to $v_{i+1}$ (considering $y$ as $v_0$ and $z$ as $v_{t+1}$).  In this way, $z$ receives weight $1$ and every other vertex has weight $\frac{1}{\kappa+1}$.
\end{proof}

Following immediately from Proposition \ref{prop:basicbound}, Lemma \ref{degreebounds}, and Theorem \ref{conn}, we obtain the following corollary, which motivates our study of Cayley graphs.

\begin{cor}\label{regconn}
If $G$ is a $d$-regular, $d$-connected graph, then $\frac{n}{d+1} \leq \fed(G) \leq \frac{n + d}{d+1}$.
\end{cor}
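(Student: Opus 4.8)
The plan is to obtain both bounds by substituting the specific parameters of a $d$-regular, $d$-connected graph into the three results just established; the only real content is identifying which parameter each cited result requires and confirming that regularity and $d$-connectivity pin these parameters down exactly. No new estimate or construction is needed beyond what has already been proved, so the proof will be essentially a chain of substitutions.

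For the lower bound, I would first note that $d$-regularity forces $\delta(G) = \Delta(G) = d$. Substituting $\Delta = d$ into the lower bound of Lemma \ref{degreebounds} yields $\gamma_f(G) \geq \frac{n}{d+1}$; in fact, since $\delta = \Delta = d$ the two bounds of that lemma coincide and give $\gamma_f(G) = \frac{n}{d+1}$, although only the lower inequality is needed here. Chaining this with the first inequality of Proposition \ref{prop:basicbound}, namely $\gamma_f(G) \leq \fed(G)$, gives $\fed(G) \geq \frac{n}{d+1}$.

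For the upper bound, I would apply Theorem \ref{conn}. The one point requiring care is that Theorem \ref{conn} is phrased in terms of the exact connectivity $\kappa$, whereas the hypothesis ``$d$-connected'' a priori only guarantees $\kappa(G) \geq d$. However, by Whitney's inequality every graph satisfies $\kappa(G) \leq \delta(G)$, and here $\delta(G) = d$, so in fact $\kappa(G) = d$ exactly. Substituting $\kappa = d$ into Theorem \ref{conn} then gives $\fed(G) \leq \frac{n+d}{d+1}$, completing the argument.

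The main (and essentially the only) obstacle is this bookkeeping: ensuring the hypotheses of each cited result are genuinely met, and in particular recognizing that regularity together with $d$-connectivity forces $\delta = \Delta = \kappa = d$, so that all three results can be invoked with the single parameter $d$. It is worth flagging that the upper bound $\frac{n+\kappa}{\kappa+1}$ of Theorem \ref{conn} is decreasing in $\kappa$ when $n > 1$, so the identification $\kappa = d$ (rather than merely $\kappa \geq d$) is what guarantees the stated value is the one obtained, rather than an even smaller quantity.
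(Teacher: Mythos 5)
Your proposal is correct and follows exactly the paper's route: the paper derives the corollary immediately from Proposition \ref{prop:basicbound}, Lemma \ref{degreebounds}, and Theorem \ref{conn}, which are precisely the three results you chain together with the substitutions $\delta = \Delta = \kappa = d$. Your extra remark about Whitney's inequality pinning down $\kappa = d$ is sound added rigor (and in any case harmless, since the bound $\frac{n+\kappa}{\kappa+1}$ is decreasing in $\kappa$, so $\kappa \geq d$ alone would already suffice), but it does not change the argument.
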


\begin{cor}
If $G$ is $d$-regular and $d$-connected, then there exists a polynomial time approximation algorithm for
$\gamma_f^\infty$ that has error at most 1.
\end{cor}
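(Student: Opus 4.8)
The plan is to observe that this corollary is an immediate consequence of Corollary \ref{regconn}, whose two bounds already confine $\fed(G)$ to an interval of width strictly less than $1$. First I would recall that for a $d$-regular, $d$-connected graph $G$ on $n$ vertices we have
$$\frac{n}{d+1} \leq \fed(G) \leq \frac{n+d}{d+1}.$$
The key arithmetic observation is that the upper bound exceeds the lower bound by exactly $\frac{n+d}{d+1} - \frac{n}{d+1} = \frac{d}{d+1}$, which is strictly less than $1$.

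Next I would exhibit the algorithm explicitly. On input $G$, read off the number of vertices $n$ and the common degree $d$ (equivalently the connectivity, since $G$ is both $d$-regular and $d$-connected), and output the value $L = \frac{n}{d+1}$ as the estimate of $\fed(G)$. Reading $n$ and $d$ and performing this single division is clearly polynomial in the size of $G$, so the algorithm runs in polynomial time.

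Finally I would bound the error. Since $L \leq \fed(G) \leq L + \frac{d}{d+1}$, the output differs from the true value by at most $\frac{d}{d+1} < 1$, so the algorithm has error at most $1$, as required. (One could instead return the midpoint $\frac{2n+d}{2(d+1)}$ to guarantee error at most $\frac{1}{2}$, but error at most $1$ already suffices for the statement.)

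I do not anticipate a genuine obstacle here, as the content is entirely carried by Corollary \ref{regconn}. The only points needing care are confirming that both bounds are computable in polynomial time --- immediate, since they depend only on the integers $n$ and $d$ --- and verifying that their gap $\frac{d}{d+1}$ is strictly below the target error of $1$, which it is for every $d \geq 1$.
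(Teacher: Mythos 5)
Your proposal is correct: Corollary \ref{regconn} pins $\fed(G)$ inside the interval $\left[\frac{n}{d+1},\frac{n+d}{d+1}\right]$ of width $\frac{d}{d+1}<1$, so simply outputting the lower endpoint (or the explicit number $\frac{n+d}{d+1}$, realized by the strategy from Theorem \ref{conn}) is a polynomial-time estimate with error strictly less than $1$. However, this is a genuinely different route from the paper's. The paper instead invokes the linear-programming machinery of Section \ref{sec:linprog}: the explicit strategy in the proof of Theorem \ref{conn} (weight $1$ on the attacked vertex, $\frac{1}{d+1}$ everywhere else, reconfigured along $d$ internally disjoint paths) shows that the polynomial-size linear program $A$ has optimum at most $\frac{n+d}{d+1} < \gamma_f(G)+1 \leq \fed(G)+1$, while that optimum is always an upper bound on $\fed(G)$; solving $A$, which Proposition \ref{nsets} notes can be done in polynomial time, therefore yields a value within $1$ of $\fed(G)$. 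The trade-off is this: your argument is more elementary --- it requires no LP solve at all, just reading off $n$ and $d$ --- whereas the paper's algorithm returns not merely a number but an actual collection of $n$ fractional dominating functions and reconfiguration flows, i.e.\ a certified guarding strategy whose total weight is within $1$ of optimal, and its output can be strictly better than $\frac{n+d}{d+1}$ (indeed it equals $\fed(G)$ exactly whenever $G$ can be eternally fractionally dominated by $n$ f.d.-functions). If the phrase ``approximation algorithm'' is read as requiring a feasible solution rather than a numerical estimate, you can repair your version at no cost by outputting the Theorem \ref{conn} strategy itself, whose weight $\frac{n+d}{d+1}$ exceeds $\fed(G)$ by less than $1$.
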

\begin{proof}
The proof of Theorem \ref{conn} 
shows that $G$ can be fractionally eternally dominated by $n$ f.d.-functions, each of
total weight less that $\gamma_f + 1$.  The result then follows from Proposition \ref{nsets}.
\end{proof}

Every abelian Cayley graph is regular and has connectivity equal to its degree, and thus we obtain the following:

\begin{cor}
If $G$ is an abelian Cayley graph, then $\fed(G) - \gamma_f(G) < 1$.
\end{cor}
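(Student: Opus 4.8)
The plan is to derive the corollary purely from the bounds already assembled for $d$-regular, $d$-connected graphs, using the two structural facts about abelian Cayley graphs quoted immediately above: such a graph $G$ is $d$-regular (being vertex-transitive) and has connectivity $\kappa = d$ equal to its degree. So I would fix $d$ to be the common degree and set $\kappa = d$, $\delta = \Delta = d$ throughout.

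First I would pin down $\gamma_f(G)$ exactly. Since $G$ is $d$-regular we have $\delta(G) = \Delta(G) = d$, so the lower and upper bounds in Lemma~\ref{degreebounds} coincide and force
$$\gamma_f(G) = \frac{n}{d+1}.$$
Next I would invoke the upper bound on $\fed(G)$. Taking $\kappa = d$ in Theorem~\ref{conn} (equivalently, reading off Corollary~\ref{regconn}) gives
$$\fed(G) \leq \frac{n+d}{d+1}.$$
Subtracting and simplifying then yields
$$\fed(G) - \gamma_f(G) \leq \frac{n+d}{d+1} - \frac{n}{d+1} = \frac{d}{d+1} < 1,$$
which is exactly the claim.

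The main point to be careful about is not any computation — the arithmetic is immediate — but the justification that an abelian Cayley graph has connectivity equal to its degree, since it is precisely this equality that lets one substitute $\kappa = d$ into Theorem~\ref{conn}. This is the only nontrivial input, and here it is supplied by the sentence preceding the corollary (together with the cited literature); everything else follows by plugging $\delta = \Delta = \kappa = d$ into results already in hand. In short, the corollary is a two-line consequence of Lemma~\ref{degreebounds} and Theorem~\ref{conn} once regularity and the connectivity-equals-degree property are granted.
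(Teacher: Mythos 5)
Your proposal is correct and is essentially the paper's own (implicit) argument: the paper derives this corollary directly from Corollary \ref{regconn} (itself a combination of Lemma \ref{degreebounds} and Theorem \ref{conn}) together with the stated facts that abelian Cayley graphs are regular with connectivity equal to degree. The only cosmetic difference is that you pin down $\gamma_f(G) = \frac{n}{d+1}$ exactly, when the lower bound $\gamma_f(G) \geq \frac{n}{d+1}$ already suffices.
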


In light of this, it is reasonable to ask whether or not $\fed(G)$ can be exactly determined for abelian Cayley graphs.  The rest of this section is devoted to showing that this is a difficult task, even under the strong assumption that the graph is cubic.

\subsection{Cubic abelian Cayley graphs}

At the end of Section \ref{sec:basic}, we gave a number of conditions under which $\gamma(G)$ is a lower bound on $\fed(G)$.  In the case of cubic abelian Cayley graphs, however, it turns out that $\gamma(G) = \med(G)$, and thus $\gamma(G)$ is an upper bound on $\fed(G)$.  We characterize precisely those cubic abelian Cayley graphs for which the upper bound is strict.

\begin{thm} \label{thm:Cayleyfrac}
If $G$ is a cubic abelian Cayley graph, then $\fed(G) \leq \gamma(G) = \med(G)$.  Furthermore, $\fed(G) < \gamma(G)$ if and only if $G$ is isomorphic either to $C_{4k+2} \Box K_2$ or to $\Cay(\Z_{8k},\{\pm 1,4k\})$ for some integer $k \geq 1$.
\end{thm}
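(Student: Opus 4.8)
The plan is to pass to the known classification of connected cubic abelian Cayley graphs and then argue arithmetically. A connected cubic Cayley graph $\Cay(\Gamma,S)$ on an abelian group must have $S=\{a,-a,t\}$ with $t$ an involution (the only way a $3$-element set can be closed under negation), so $\Gamma=\langle a,t\rangle$ is either cyclic or $\langle a\rangle\times\langle t\rangle$; hence $G$ is a prism $C_m\Box K_2$ or a M\"obius ladder $\Cay(\Z_{2m},\{\pm1,m\})$, and these are non-isomorphic once $m\ge3$ (one is planar, the other is not). Write $n=|V(G)|=2m$. As $G$ is $3$-regular, Lemma~\ref{degreebounds} gives $\gamma_f(G)=n/4=m/2$, and since abelian Cayley graphs have connectivity equal to their degree (here $3$), Corollary~\ref{regconn} gives $m/2\le\fed(G)\le(2m+3)/4$. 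For the first assertion I would combine $\fed\le\med$ (Proposition~\ref{prop:basicbound}) with $\gamma=\med$; the inequality $\gamma\le\med$ is automatic, while $\med\le\gamma$ I would obtain for each family by taking a minimum dominating set and giving a response strategy that slides guards along the cyclic edges (or by citing the known values of $\med$ for prisms and M\"obius ladders).

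The core observation is that $\fed(G)\le\min\{\gamma(G),(2m+3)/4\}$, so $\fed(G)<\gamma(G)$ whenever $(2m+3)/4<\gamma(G)$. Because $G$ is cubic, a dominating set of size $\gamma_f=m/2$ would have to partition $V$ into closed neighbourhoods, so $\gamma=\gamma_f$ if and only if $G$ has an efficient dominating set; since $\gamma$ is an integer with $\gamma\ge m/2$, the inequality $(2m+3)/4<\gamma$ holds exactly when $m$ is even and $G$ has no efficient dominating set. The key computation is therefore a characterization of efficient dominating sets (perfect codes): I would show that $C_m\Box K_2$ has one iff $m\equiv0\pmod4$, and that $\Cay(\Z_{2m},\{\pm1,m\})$ has one iff $m\equiv2\pmod4$. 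For the prism this is a clean parity count (placing $p$ code vertices in one $C_m$-layer forces $2p+m/2=m$, i.e.\ $p=m/4$, which is integral iff $4\mid m$); for the M\"obius ladder the balls $\{v-1,v,v+1,v+m\}$ tile $\Z_{2m}$ only when the code is the coset $c+4\Z_{2m}$, which closes up iff $m\equiv2\pmod4$. Consequently the even-$m$ graphs without a perfect code are exactly $C_{4k+2}\Box K_2$ and $\Cay(\Z_{8k},\{\pm1,4k\})$, and for these Corollary~\ref{regconn} yields $\fed(G)\le(2m+3)/4<m/2+1\le\gamma(G)$, proving the ``if'' direction.

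For the converse I would show $\fed(G)=\gamma(G)$ for every remaining graph. When $G$ has a perfect code ($m\equiv0\pmod4$ prisms and $m\equiv2\pmod4$ M\"obius ladders) we have $\gamma=\gamma_f$, so Proposition~\ref{prop:basicbound} squeezes $\fed=\gamma_f=\gamma$ (equivalently Corollary~\ref{packcor1} applies). The remaining case is $m$ odd, where no perfect code exists and $\gamma=\lceil m/2\rceil=(m+1)/2$; here I would exhibit a $2$-packing $P$ of size $(m-1)/2$ together with a dominating set of size $(m+1)/2$. For the prism one takes the columns $0,2,4,\dots,m-3$ with alternating $K_2$-coordinate, and for the M\"obius ladder a spread set such as $\{0,5,10,\dots\}$ chosen so that the chord $+m$ creates no shortcut of length below $3$. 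Since a cubic $2$-packing satisfies $|N[P]|=4|P|=2m-2<n$, we have $N[P]\subsetneq V$, so Proposition~\ref{prop:packing} gives $\fed(G)\ge1+(m-1)/2=(m+1)/2=\gamma(G)$, and with $\fed\le\gamma$ equality follows.

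I expect the main obstacle to be the exact perfect-code characterization, on which the whole dichotomy rests: everything hinges on the congruence class of $m$ that decides whether the closed balls tile $\Z_{2m}$, and the M\"obius-ladder case is the more delicate tiling problem because the chord $+m$ interacts with the cyclic spacing (so the non-existence direction for $m\equiv0\pmod4$ needs a genuine counting argument rather than the clean layer-parity available for prisms). The second technical point is building $2$-packings of the exact size $(m-1)/2$ in the odd M\"obius ladders, where the small diameter forced by the chords makes large $2$-packings nonobvious and each candidate must be checked against chord-shortcuts. Finally, verifying $\med=\gamma$ (needed only to state $\fed\le\gamma=\med$) is comparatively routine but still requires a per-family guard-sliding strategy.
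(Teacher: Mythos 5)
Your proposal follows essentially the same route as the paper's proof: the classification of cubic abelian Cayley graphs into prisms and M\"obius ladders (the paper's Theorem~\ref{thm:Cayleyclassification}), $\med=\gamma$ via a guard-sliding strategy (Lemmas~\ref{lem:allguardsprism} and~\ref{lem:allguardsladder}), the upper bound $\fed(G)\le(2m+3)/4$ from Corollary~\ref{regconn}, lower bounds $\fed(G)\ge\gamma(G)$ from efficient dominating sets (Corollary~\ref{packcor1}) or from $2$-packings of size $(m-1)/2$ (Proposition~\ref{prop:packing}), and non-existence of efficient dominating sets in exactly the two exceptional families. Your packaging is tidier in two respects: you organize everything around a single perfect-code dichotomy ($\fed<\gamma$ if and only if $m$ is even and $G$ has no efficient dominating set), where the paper runs a separate mod-$4$ case analysis for each family; and your counting observation that a $2$-packing of size $(m-1)/2$ in a cubic graph on $2m$ vertices automatically satisfies $N[P]\subsetneq V$ lets you invoke Proposition~\ref{prop:packing} without naming an undominated vertex, as the paper does case by case.

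Two details in your write-up are wrong and need repair, though both are fixable within your plan. First, the parenthetical justification of the classification is false: a $3$-element set closed under negation may also consist of three involutions, which is precisely how $K_4$ and $Q_3$ arise as Cayley graphs of $\Z_2^2$ and $\Z_2^3$ (they appear as separate items in the paper's Theorem~\ref{thm:Cayleyclassification}). Your conclusion survives only because these two graphs happen to coincide with the M\"obius ladder $\Cay(\Z_4,\{\pm1,2\})$ and with $C_4\Box K_2$, so your subsequent case analysis still covers them; but as a derivation of the classification the argument is incomplete, and you should cite the classification (as the paper does) rather than rely on it. Second, your M\"obius-ladder $2$-packing candidate $\{0,5,10,\ldots\}$ cannot work: to get $\fed\ge 1+(m-1)/2$ from Proposition~\ref{prop:packing} you need a $2$-packing of size exactly $(m-1)/2$, and since closed neighbourhoods have size $4$ this forces spacing essentially $4$; a spacing-$5$ set has only about $2m/5$ elements, and when $5\mid m$ it is not even a $2$-packing, since it contains the adjacent pair $\{0,m\}$. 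The construction that works, as in the paper's Lemma~\ref{lem:ladderfrac1}, is a union of two arithmetic progressions of common difference $4$, the second offset from the first by $m+2$ so that the chords create no conflicts. With those two corrections your plan is complete and is, in substance, the paper's proof.
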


To prove this theorem we rely on the following characterization of cubic abelian Cayley graphs found in \cite{wai19953}.  Recall that $Q_d$ denotes the hypercube of dimension $d$.

\begin{thm}\label{thm:Cayleyclassification}
A graph $G$ is a cubic abelian Cayley graph if and only if it is one of the following:
	\begin{compactenum}
	\item $K_4$,
    \item $Q_3$,
    \item $C_n \Box K_2$ ($n \geq 3$),
    \item $\Cay(\Z_{2n}, \{\pm 1,n\})$ ($n \geq 3$).
	\end{compactenum}
\end{thm}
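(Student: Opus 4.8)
The plan is to prove the classification directly from the definition, by classifying the admissible connection sets of size $3$ in an abelian group. Recall that a Cayley graph $\Cay(A,S)$ of an abelian group $A$ with connection set $S$ is cubic precisely when $|S|=3$; since $S$ must satisfy $S=-S$ and $0 \notin S$, and since for a connected Cayley graph $\langle S \rangle = A$, the whole problem reduces to understanding the possible structures of a generating set $S=-S$ of size $3$. The first observation I would record is a parity argument: each inverse pair $\{s,-s\}$ with $s \neq -s$ contributes $2$ to $|S|$, so the number of involutions (elements of order $2$) in $S$ must have the same parity as $|S|=3$; hence $S$ contains either exactly one or exactly three involutions. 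This splits the argument into two main cases.

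In the first case, $S = \{s,-s,t\}$ where $t$ is an involution and $s$ is not (so $s$ has order $m \geq 3$). I would then distinguish whether or not $t \in \langle s \rangle$. If $t \in \langle s \rangle$, then $A = \langle s \rangle$ is cyclic and must have even order; writing $m = 2n$, the unique involution is $s^{n}$, so after choosing $s$ as a generator we obtain $A \cong \Z_{2n}$ and $S = \{\pm 1, n\}$, which is family (4). If $t \notin \langle s \rangle$, then since $A$ is abelian and $t$ has order $2$ we have $A = \langle s \rangle \times \langle t \rangle \cong \Z_m \times \Z_2$, and the generators $\pm s$ produce an $m$-cycle within each $\langle t\rangle$-coset while $t$ matches the two cosets along a perfect matching; this is exactly the prism $C_m \Box K_2$, family (3).

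In the second case, $S = \{t_1,t_2,t_3\}$ consists of three distinct involutions, so $\langle S\rangle = A$ is an elementary abelian $2$-group $(\Z_2)^k$, and I would split on linear independence of the $t_i$ over $\Z_2$. If they are independent, then $A \cong (\Z_2)^3$ and $\Cay(A,S) \cong Q_3$, family (2). If they are dependent, then because they are distinct and nonzero the only possible relation is $t_1+t_2+t_3 = 0$, forcing $A \cong (\Z_2)^2$ with $S$ equal to all three nonzero elements, so $\Cay(A,S) \cong K_4$, family (1). Finally I would verify the (easy) converse, that each of the four listed graphs is indeed a connected cubic abelian Cayley graph, and check the stated ranges, noting in particular that the small overlaps $\Cay(\Z_4,\{\pm 1,2\}) \cong K_4$ and $C_2 \Box K_2$ are exactly why families (3) and (4) are restricted to $n \geq 3$.

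I expect the main obstacle to be the bookkeeping needed to recognize each resulting Cayley graph as the named graph and to dispose of the degenerate small cases cleanly; the genuinely nontrivial content is the structural step in the first case, where one must combine commutativity with $t^2=0$ to conclude that $A$ splits as $\langle s\rangle \times \langle t\rangle$ whenever $t \notin \langle s\rangle$, and then correctly identify the resulting graph as a prism rather than as a cyclic Cayley graph. Everything else reduces to a finite check or a direct isomorphism.
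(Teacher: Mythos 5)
You should first note that the paper does not prove Theorem \ref{thm:Cayleyclassification} at all: it is imported as a known classification, cited to \cite{wai19953}, and used as a black box in the proof of Theorem \ref{thm:Cayleyfrac}. Your argument therefore supplies a proof where the paper has none, and it is correct. The case split is exhaustive: since $S=-S$, $0\notin S$, and $|S|=3$ is odd, $S$ contains exactly one or exactly three involutions. With one involution, the dichotomy $t\in\langle s\rangle$ versus $t\notin\langle s\rangle$ gives, respectively, the cyclic case $A\cong\Z_{2n}$ with $S=\{\pm 1,n\}$ (family (4), the subcase $n=2$ being absorbed by $K_4$) and the internal direct product $A=\langle s\rangle\times\langle t\rangle\cong\Z_m\times\Z_2$ (valid because $\langle s\rangle\cap\langle t\rangle$ must be trivial when $t\notin\langle s\rangle$), whose Cayley graph is the prism $C_m\Box K_2$. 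With three involutions, $A$ is elementary abelian, and the only possible $\mathbb{F}_2$-dependence among three distinct nonzero elements is $t_1+t_2+t_3=0$, giving $K_4$, while independence gives $Q_3$. Two small blemishes, neither fatal: the $m$-cycles generated by $\pm s$ lie in the two cosets of $\langle s\rangle$, not in the $\langle t\rangle$-cosets (those are the $K_2$ fibres of the matching), so that sentence swaps the roles of the two subgroups; and the restriction $n\geq 3$ in family (3) is not there to avoid an overlap with another family but simply because $C_n$ is only defined as a simple cycle for $n\geq 3$ --- the case $m=2$ cannot occur in your analysis anyway, since $s$ was assumed not to be an involution. With those wording fixes, your argument is a clean, self-contained replacement for the citation.
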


We settle the first two cases quickly.

\begin{lem}\label{lem:Cayleysimple}
If $G \in \{K_4,Q_3\}$, then $\fed(G) = \gamma(G)$
\end{lem}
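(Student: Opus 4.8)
The plan is to verify the claim for each of the two tiny graphs directly, using the parameter bounds already established. Recall $K_4$ is complete, so by Proposition~\ref{prop:basicgraphs}(1) we have $\fed(K_4)=1$, and since $\gamma(K_4)=1$ the equality $\fed(K_4)=\gamma(K_4)$ holds immediately. The only real content is the hypercube $Q_3$, and for this I would first pin down the relevant domination parameters. It is standard that $\gamma(Q_3)=2$: two antipodal vertices form a (perfect, in fact efficient) dominating set, and a single vertex cannot dominate all $8$ vertices since $Q_3$ is $3$-regular. Thus I expect $\gamma(Q_3)=\med(Q_3)=2$, consistent with Theorem~\ref{thm:Cayleyfrac}, which asserts $\fed\le\gamma=\med$ here.

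The heart of the argument is therefore the two-sided bound $\fed(Q_3)=2$. The upper bound $\fed(Q_3)\le 2$ comes for free from Proposition~\ref{prop:basicbound}, since $\fed(G)\le\med(G)=2$. For the lower bound, I would invoke the packing machinery from the previous subsection rather than reason about attack sequences by hand. Since $Q_3$ admits an efficient dominating set (a pair of antipodal vertices, each vertex lying in exactly one closed neighbourhood), Corollary~\ref{packcor1} gives $\fed(Q_3)\ge\gamma(Q_3)=2$ directly. Combining the two bounds yields $\fed(Q_3)=2=\gamma(Q_3)$, completing this case. An equally clean alternative for the lower bound is Proposition~\ref{prop:noncomplete}: $Q_3$ is not complete, so $\fed(Q_3)\ge 2$; I would likely cite this as it is the most elementary route and avoids even needing the efficient dominating set.

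Putting the pieces together, the proof is essentially a two-line case check: $\fed(K_4)=1=\gamma(K_4)$ from Proposition~\ref{prop:basicgraphs}(1), and $2=\fed(Q_3)=\gamma(Q_3)$ by sandwiching $\fed(Q_3)$ between the lower bound of Proposition~\ref{prop:noncomplete} (or Corollary~\ref{packcor1}) and the upper bound $\med(Q_3)=2$ of Proposition~\ref{prop:basicbound}. I do not anticipate a genuine obstacle here; the only thing requiring a moment's care is confirming $\gamma(Q_3)=\med(Q_3)=2$, which the surrounding discussion (cubic abelian Cayley graphs satisfy $\gamma=\med$) already licenses, so the value $2$ is the correct common value to aim for. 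The lemma is deliberately a warm-up that disposes of the two sporadic graphs in Theorem~\ref{thm:Cayleyclassification} before the substantive analysis of the two infinite families $C_n\Box K_2$ and $\Cay(\Z_{2n},\{\pm 1,n\})$.
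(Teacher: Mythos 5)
Your proof is correct and follows essentially the same route as the paper's: for $K_4$ both parameters are clearly $1$, and for $Q_3$ the paper likewise obtains the lower bound $\fed(Q_3)\ge\gamma(Q_3)=2$ from the efficient dominating set via Corollary~\ref{packcor1} and the upper bound from $\fed(Q_3)\le\med(Q_3)=2$. Your aside that Proposition~\ref{prop:noncomplete} would also supply the lower bound is a valid, slightly more elementary alternative, but it does not change the structure of the argument.
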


\begin{proof}
If $G = K_4$, then each parameter is clearly equal to $1$.  Suppose $G = Q_3$, which has $\gamma(G) = 2$.  On one hand, $G$ has an efficient dominating set of size $2$, and so by Corollary \ref{packcor1} we have that $\fed(G) \geq \gamma(G) = 2$. On the other hand, $\fed(G) \leq \gamma_{\rm m}^\infty(G) = 2 = \gamma(G)$, therefore equality holds.
\end{proof}

To finish the proof of Theorem \ref{thm:Cayleyfrac}, the final two cases require a more in-depth analysis.

\subsection{Cyclic prisms}\label{ssec:Cprism}

Let us now turn to $C_n \Box K_2$, where $n \geq 3$.  We first show that the all-guards move model of eternal domination requires a number of guards equal to the domination number, and then consider $\fed(G)$ by cases based on the value of $n \pmod 4$.

\begin{lem}\label{lem:allguardsprism}
For each integer $n \geq 3$, $\med(C_n \Box K_2) = \gamma(C_n \Box K_2)$.
\end{lem}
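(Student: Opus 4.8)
The plan is to establish the equality $\med(C_n \Box K_2) = \gamma(C_n \Box K_2)$ by proving both inequalities. The inequality $\gamma(C_n \Box K_2) \leq \med(C_n \Box K_2)$ is immediate from the fundamental chain $\gamma(G) \leq \med(G)$ stated in the introduction, so the real content is the reverse inequality $\med(C_n \Box K_2) \leq \gamma(C_n \Box K_2)$. To prove this, I would exhibit a minimum dominating set $D$ of $C_n \Box K_2$ with $|D| = \gamma(C_n \Box K_2)$ and demonstrate an explicit all-guards-move strategy showing that $D$ is an m-eternal dominating set. Thus the core task is constructive.

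First I would pin down the domination number and a canonical minimum dominating set. Writing the prism as two cycles $u_0 u_1 \cdots u_{n-1}$ and $v_0 v_1 \cdots v_{n-1}$ with rungs $u_i v_i$, each vertex dominates its two cycle-neighbours, its rung-partner, and itself (degree $3$). The known value is $\gamma(C_n \Box K_2) = \lceil (n+1)/2 \rceil$ for most residues (with a small correction depending on $n \bmod 4$), and I would record an efficient periodic placement: for instance, placing guards on a pattern that alternates between the two cycles as one proceeds around, covering a block of columns with each guard. The precise placement should be chosen so that the guards form a ``nearly efficient'' cover with a predictable overlap structure, because that structure is exactly what makes the eternal strategy work.

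Next I would describe the defence strategy. The key idea for all-guards-move eternal domination on such highly symmetric graphs is to exploit the rotational symmetry of $C_n$: when a vertex is attacked, I shift the entire guard configuration by one step around the cycle (a cyclic rotation of the dominating set), possibly combined with a rung-swap, so that after the shift the attacked vertex is occupied and the configuration is again a (rotated) copy of the original dominating set. Because each guard moves only to an adjacent vertex under such a unit rotation, and because the rotated set is again a minimum dominating set, this gives a valid response that can be repeated indefinitely. I would verify that from any configuration in the orbit of $D$ under these rotation/swap moves, every possible attacked vertex can be brought under a guard by a single legal all-guards move into an adjacent configuration in the same orbit; this is a finite check made tractable by symmetry.

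The main obstacle I anticipate is the case analysis forced by the value of $n \bmod 4$: the structure of minimum dominating sets of $C_n \Box K_2$, and in particular whether a clean rotationally-symmetric pattern exists with exactly $\gamma$ guards, depends on the residue of $n$ modulo $4$ (this dependence is already foreshadowed by the appearance of $C_{4k+2}\Box K_2$ in Theorem \ref{thm:Cayleyfrac}). In the residues where $n$ is not compatible with a perfectly periodic pattern of period $2$ around the cycle, the dominating set will have one or two ``defect'' columns, and I will need to handle those defects separately -- showing that a guard can still be routed through or around the defect to answer an attack while preserving a dominating configuration. I expect the bulk of the work, and the only genuinely delicate part, to be checking that these defect configurations remain eternally defensible rather than the generic periodic region, where the rotation argument is clean.
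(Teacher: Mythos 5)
Your central move---answering an attack by shifting the \emph{entire} guard configuration one step around the cycle, or by the rung swap---is exactly the mechanism of the paper's proof, so your core approach is correct and essentially the same. Where you differ is in where you locate the difficulty, and the difference matters. The paper's proof requires no explicit minimum dominating set, no formula for $\gamma(C_n \Box K_2)$, and no case analysis on $n \bmod 4$: it takes an \emph{arbitrary} minimum dominating set $D$, observes that the maps $(i,j)\mapsto(i\pm 1,j)$ and $(i,j)\mapsto(i,j+1)$ are automorphisms of $C_n \Box K_2$ each of which moves every vertex to a neighbour, and notes that when a vertex $v\notin D$ is attacked, some guard $u\in D$ is adjacent to $v$, so $v$ is the image of $u$ under one of these three shifts. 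Applying that single shift to all guards simultaneously is a legal all-guards move, places a guard on $v$, and produces the image of $D$ under an automorphism---hence again a minimum dominating set---so the strategy repeats indefinitely. Consequently, what you forecast as ``the bulk of the work'' (defect columns for bad residues, routing guards through or around defects, a finite symmetry-reduced check) is vacuous: whatever irregular structure $D$ has simply rotates along with everything else, and domination is preserved automatically because automorphisms preserve it. Your plan is not wrong, but carried out literally it would bury a two-line symmetry argument under structural analysis that the argument never uses; worse, your intention to ``route a guard around the defect'' suggests deviating from the pure shift near defects, which is precisely where an ad hoc strategy could break.

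Two smaller cautions. First, your stated value $\gamma(C_n \Box K_2) = \lceil (n+1)/2 \rceil$ fails for $n \equiv 0 \pmod 4$ (there the value is $n/2$, witnessed by an efficient dominating set; it is $n/2+1$ only when $n \equiv 2 \pmod 4$, and $\lceil n/2 \rceil$ otherwise)---though, as above, the proof never needs this. Second, a rotation ``combined with a rung-swap'' is not a legal response: the composed map sends each guard to a vertex at distance $2$ (a diagonal move). It is also never needed, since exactly one of the three basic shifts always suffices to cover the attacked vertex.
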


\begin{proof}
Suppose the guards are on the vertices of a dominating set of minimum size which contains the vertex $(0,0)$ and without loss of generality the guard on $(0,0)$ is the one who responds to the first attack. If he moves to $(0,1)$, then all guards on $(i,j)$ can move to $(i,j+1)$ in order to maintain a similar configuration (after relabelling the vertices of the graph, and where operations in the first and second coordinates are taken mod $n$ and mod $2$, respectively). If he moves to $(1,0)$, then all guards on $(i,j)$ can move to $(i+1,j)$ to maintain a similar configuration. If he moves to $(n-1,0)$, then all guards on $(i,j)$ can move to $(i-1,j)$.
\end{proof}

\begin{lem}\label{lem:prismfrac1}
If $n \equiv 0,1,3 \pmod 4$, then $\fed(C_n \Box K_2) = \gamma(C_n \Box K_2) = \lceil \frac{n}{2} \rceil$.
\end{lem}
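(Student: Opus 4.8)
The plan is to establish the two bounds separately, with the upper bound essentially free and the lower bound carrying all the work. For the upper bound, Proposition \ref{prop:basicbound} gives $\fed(C_n \Box K_2) \leq \med(C_n \Box K_2)$, and Lemma \ref{lem:allguardsprism} gives $\med(C_n \Box K_2) = \gamma(C_n \Box K_2)$; together with a dominating set of size $\lceil n/2 \rceil$ (exhibited below) this yields $\fed(C_n \Box K_2) \leq \gamma(C_n \Box K_2) \leq \lceil n/2 \rceil$. The matching lower bound $\fed(C_n \Box K_2) \geq \lceil n/2 \rceil$ will come from the $2$-packing machinery of Proposition \ref{prop:packing} and Corollary \ref{packcor1}, and then $\lceil n/2 \rceil \leq \fed \leq \gamma \leq \lceil n/2 \rceil$ forces all three quantities to coincide.

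First I would record the distance function of the prism. Writing vertices as $(i,j)$ with $i \in \Z_n$ and $j \in \{0,1\}$, and letting $d_{C_n}$ be cycle distance, one checks directly that $d\bigl((i,j),(i',j)\bigr) = d_{C_n}(i,i')$ and $d\bigl((i,j),(i',1-j)\bigr) = d_{C_n}(i,i') + 1$. Consequently a set $P$ is a $2$-packing precisely when any two of its vertices in the same copy of $C_n$ have cycle distance at least $3$, and any two in opposite copies have cycle distance at least $2$. Since the graph is cubic, each closed neighbourhood has size $4$, and disjointness of closed neighbourhoods gives $4|P| \leq 2n$, so $\lceil n/2 \rceil$ is exactly the target at the boundary of what packings can certify.

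The construction then splits by parity. For $n \equiv 0 \pmod 4$ I would take the ``diagonal'' set $\{(2k,\, k \bmod 2) : 0 \leq k \leq n/2 - 1\}$; the spacing-$2$, row-alternating pattern makes every consecutive pair have prism distance exactly $3$, and because $n/2$ is even the alternation closes up correctly across the seam between column $n-2$ and column $0$. This is a $2$-packing of size $n/2$ with $4 \cdot (n/2) = 2n$, hence an efficient dominating set, so Corollary \ref{packcor1} gives $\fed \geq \gamma = n/2 = \lceil n/2 \rceil$. For odd $n$ (the cases $n \equiv 1,3 \pmod 4$) the same diagonal pattern cannot wrap around, so I would truncate it to $\{(2k,\, k \bmod 2) : 0 \leq k \leq (n-3)/2\}$, a $2$-packing of size $(n-1)/2$; here $4|P| = 2n-2 < 2n$, so $N[P] \subsetneq V$ and Proposition \ref{prop:packing} yields $\fed \geq 1 + (n-1)/2 = (n+1)/2 = \lceil n/2 \rceil$. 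To also certify $\gamma \leq \lceil n/2 \rceil$ in the odd case, I would note that the (at most two) uncovered vertices lie in the gap columns $n-2,n-1$ and can be dominated by a single additional vertex, giving a dominating set of size $(n-1)/2 + 1 = \lceil n/2 \rceil$.

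The main obstacle is purely bookkeeping around the seam: verifying that the row-alternating pattern meets the $2$-packing distance constraints where the chain wraps from column $n-3$ (or $n-2$) back to column $0$, including the small-$n$ anomaly ($n=5$) where the wrap-around cycle distance is $2$ rather than $3$ and one must check that the two seam vertices sit in opposite copies. It is worth emphasising that this seam check is exactly where the excluded residue $n \equiv 2 \pmod 4$ breaks: there $n/2$ is odd, the alternation fails to close, no size-$n/2$ efficient dominating set exists, and $\gamma$ (and the analysis of $\fed$) genuinely differs, which is why that case is handled elsewhere.
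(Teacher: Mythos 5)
Your proposal is correct and follows essentially the same route as the paper: the upper bound via Proposition \ref{prop:basicbound} and Lemma \ref{lem:allguardsprism}, and the lower bound via an efficient dominating set (Corollary \ref{packcor1}) when $n \equiv 0 \pmod 4$ and a non-dominating $2$-packing (Proposition \ref{prop:packing}) when $n$ is odd, using the same alternating diagonal construction up to relabelling. The only cosmetic differences are that you merge the residues $1$ and $3 \pmod 4$ into one truncated construction where the paper treats them separately, and you verify $\gamma \leq \lceil n/2\rceil$ explicitly where the paper takes the domination number of the prism as known.
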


\begin{proof}
By Lemma \ref{lem:allguardsprism}, it suffices to show that $\gamma(C_n \Box K_2) = \lceil \frac{n}{2} \rceil$ is a lower bound on $\fed(C_n \Box K_2)$.  We proceed by cases:
\begin{compactenum}
\item $n \equiv 0 \pmod 4$: The set $S_0=\{(4i,0): i \in [\frac{n}{4}]\} \cup \{(4i+2,1): i \in [\frac{n}{4}]\}$ (where $[\frac{n}{4}]=\{1, \ldots, \frac{n}{4}\}$) is an efficient dominating set of cardinality $\frac{n}{2}$. Then, by Corollary \ref{packcor1} we have $\fed(C_n \Box K_2) \geq \lceil \frac{n}{2} \rceil$.

\item $n \equiv 1 \pmod 4$: The set $S_1=\{(4i,0): i \in [\frac{n-1}{4}]\} \cup \{(4i+2,1): i \in [\frac{n-1}{4}]\}$ is a $2$-packing of cardinality $\frac{n-1}{2}$ which does not dominate the vertex $(n-1,1)$. Hence, by Proposition \ref{prop:packing} we have $\fed(C_n \Box K_2) \geq 1+\frac{n-1}{2}=\lceil \frac{n}{2} \rceil$.

\item $n \equiv 3 \pmod 4$: The set $S_3=\{(4i,0): i \in [\frac{n+1}{4}]\} \cup \{(4i+2,1): i \in [\frac{n-3}{4}]\}$ is a $2$-packing of cardinality $\frac{n-1}{2}$ which does not dominate the vertex $(n-1,1)$. As a result, by Proposition \ref{prop:packing} we have $\fed(C_n \Box K_2) \geq 1+\frac{n-1}{2}=\lceil \frac{n}{2} \rceil$. \qedhere
\end{compactenum}
\end{proof}

\begin{lem}\label{lem:prismfrac2}
If $n \equiv 2 \pmod 4$, then $\fed(C_n \Box K_2) < \gamma(C_n \Box K_2)$.
\end{lem}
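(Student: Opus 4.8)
The plan is to bound $\fed(C_n \Box K_2)$ from above using the connectivity bound of Corollary \ref{regconn}, and to compare this against the domination number, which I will show equals $\tfrac{n}{2}+1$ in this case. Since $C_n \Box K_2$ is a cubic abelian Cayley graph, it is $3$-regular and $3$-connected on $2n$ vertices, so Corollary \ref{regconn} yields $\fed(C_n \Box K_2) \le \tfrac{2n+3}{4} = \tfrac{n}{2}+\tfrac34$. It therefore suffices to prove that $\gamma(C_n \Box K_2) \ge \tfrac{n}{2}+1$ when $n \equiv 2 \pmod 4$, since then $\fed(C_n \Box K_2) \le \tfrac{n}{2}+\tfrac34 < \tfrac{n}{2}+1 \le \gamma(C_n \Box K_2)$, which is exactly the desired strict inequality.

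The heart of the argument is the lower bound on $\gamma$. Because $C_n \Box K_2$ is $3$-regular on $2n$ vertices, every closed neighbourhood has exactly four vertices, so a counting bound shows any dominating set has size at least $\tfrac{n}{2}$, with equality only if the set is an efficient (perfect) dominating set. I would thus show that $C_n \Box K_2$ admits no efficient dominating set when $n \equiv 2 \pmod 4$, which forces $\gamma \ge \tfrac{n}{2}+1$; matching this against the explicit dominating set of size $\tfrac{n}{2}+1$ obtained by perturbing the efficient pattern $S_0$ from Lemma \ref{lem:prismfrac1} then gives $\gamma = \tfrac{n}{2}+1$. To rule out a perfect code, write the vertices as $(i,j)$ with $i \in \Z_n$ and $j \in \{0,1\}$, assume $(0,0)$ lies in the code, and trace the forced membership column by column: the vertex $(1,1)$ cannot be dominated by a code vertex in its own column without double-covering a vertex already dominated by $(0,0)$, which forces $(2,1)$ into the code; iterating, the code must consist of the vertices in the even columns $0,2,4,\dots,n-2$ with rows alternating $0,1,0,1,\dots$. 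This pattern has period $4$ in the column index and is consistent with the cyclic identification $i \equiv i+n$ only when $4 \mid n$.

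I expect the wrap-around (parity) step to be the main obstacle: when $n \equiv 2 \pmod 4$ the forced alternating pattern fails to close up around the cycle, producing either a vertex dominated twice or one left undominated (for instance, when $n=6$ the vertex $(n-1,0)$ is adjacent to both $(0,0)$ and $(n-2,0)$ and so would be dominated twice). This contradiction shows no perfect code exists, hence $\gamma(C_n \Box K_2) \ge \tfrac{n}{2}+1$, and the strict inequality $\fed(C_n \Box K_2) < \gamma(C_n \Box K_2)$ follows. Alternatively, one may simply invoke the known value $\gamma(C_n \Box K_2) = \tfrac{n}{2}+1$ for $n \equiv 2 \pmod 4$ and combine it with the connectivity bound exactly as above, which keeps the proof short at the cost of citing the domination number externally.
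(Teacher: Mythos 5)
Your proposal is correct and takes essentially the same route as the paper: the upper bound $\fed(C_n \Box K_2) \le \frac{2n+3}{4}$ from the connectivity bound, combined with ruling out an efficient dominating set when $n \equiv 2 \pmod 4$ via the same forced pattern $\{(4i,0)\} \cup \{(4i+2,1)\}$ and the same wrap-around contradiction at $(n-2,0)$ and $(0,0)$. The only difference is cosmetic: you spell out the counting step (any dominating set has size at least $\tfrac{n}{2}$, with equality only for an efficient one), which the paper leaves implicit.
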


\begin{proof}
Observe that $\fed(C_n \Box K_2) \leq \frac{2n+3}{4}$ follows from Theorem \ref{conn}. So, it suffices to prove that $\gamma(C_n \Box K_2) > \frac{2n+3}{4}$. To this end, we prove that $C_n \Box K_2$ does not contain any efficient dominating set. Suppose this is not true and let $S_2$ be an efficient dominating set of $C_n \Box K_2$. Without loss of generality, let $(0,0)$ be a vertex of $S_2$. Since $S_2$ is a $2$-packing, $S_2$ must contain the vertices $(2,1), (4,0)$ and all the vertices $\{(4i,0): i \in [\frac{n+2}{4}]\} \cup \{(4i+2,1): i \in [\frac{n-2}{4}]\}$. In this case the vertices $(0,0)$ and $(n-2,0)$ would be two vertices of $S_2$ with non disjoint neighbourhood (contradiction).
\end{proof}

\subsection{M\"obius prisms}\label{ssec:Mprism}

Lastly, we consider the so-called M\"obius prisms $\Cay(\Z_{2n}, \{\pm 1,n\})$ for $n \geq 3$.

\begin{lem}\label{lem:allguardsladder}
For each integer $n \geq 3$, $\med(\Cay(\Z_{2n}, \{\pm 1,n\})) = \gamma(\Cay(\Z_{2n}, \{\pm 1,n\}))$.
\end{lem}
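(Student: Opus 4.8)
The plan is to mirror the argument for the cyclic prism in Lemma \ref{lem:allguardsprism}, exploiting the fact that $\Cay(\Z_{2n},\{\pm 1,n\})$ is a Cayley graph on the cyclic group $\Z_{2n}$ and hence admits the rotation $\rho\colon i \mapsto i+1$ as a graph automorphism. Since $\gamma(G) \leq \med(G)$ holds for every graph, it suffices to prove the reverse inequality by exhibiting a minimum dominating set that doubles as an m-eternal dominating set.

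First I would fix a minimum dominating set $D$ and, using vertex-transitivity, assume $0 \in D$ and that the guard on $0$ is the one who answers the first attack. The three neighbours of $0$ in $\Cay(\Z_{2n},\{\pm 1,n\})$ are $1$, $2n-1$, and $n$, so the attacked vertex is one of these. The key observation is that each of these three moves is realised by a single automorphism that is simultaneously a rotation: an attack at $1$ is answered by $\rho$ (every guard on $i$ moves to $i+1$), an attack at $2n-1$ by $\rho^{-1}$ (every guard on $i$ moves to $i-1$), and an attack at $n$ by $\rho^{n}$ (every guard on $i$ moves to $i+n$). Because the connection set $\{\pm 1, n\}$ is invariant under translation, each of $\rho,\rho^{-1},\rho^{n}$ is an automorphism; consequently each moves every guard along an edge of the graph and carries $D$ to another minimum dominating set, namely $D+1$, $D-1$, or $D+n$ respectively.

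The point is that each resulting configuration is merely a rotated copy of $D$, so the same analysis applies verbatim to the next attack after relabelling the responding guard as the origin. Iterating shows that $D$ defends against any infinite sequence of attacks, giving $\med(G) \leq |D| = \gamma(G)$ and hence the claimed equality.

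I do not expect a genuine obstacle, since the structure closely parallels Lemma \ref{lem:allguardsprism}. The only point requiring care is confirming that all three neighbour-moves can be effected by rotation automorphisms, rather than needing a reflection or coordinate-swap as in the cyclic prism: this holds precisely because the diagonal move $i \mapsto i+n$ is itself the rotation $\rho^{n}$. One should also record that rotations are bijections, so distinct guards remain on distinct vertices throughout, keeping each configuration a legal placement of $\gamma(G)$ guards.
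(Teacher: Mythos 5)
Your proposal is correct and follows essentially the same strategy as the paper's proof: translate the entire guard configuration by $+1$, $-1$, or $+n$ according to which neighbour of $0$ is attacked, so that the guards always occupy a rotated copy of the original minimum dominating set. Your additional remarks (that these translations are Cayley-graph automorphisms moving each guard along an edge, and that they are bijections) merely make explicit what the paper leaves implicit.
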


\begin{proof}
Suppose the guards are on the vertices of a dominating set of minimum size which contains the vertex $0$ and, without loss of generality, the guard on $0$ is the one who responds to the first attack. If he moves to vertex $1$, then any guard on a vertex $i$ can move to the vertex $i+1$ in order to maintain a similar configuration (after relabelling the vertices of the graph). If he moves to vertex $2n-1$, then any guard on a vertex $i$ can move to the vertex $i-1$ to maintain a similar configuration. If he moves to vertex $n$, then any guard on a vertex $i$ can move to vertex $i+n$.
\end{proof}

\begin{lem}\label{lem:ladderfrac1}
If $n \not\equiv 0 \pmod 4$, then $\fed(\Cay(\Z_{2n}, \{\pm 1,n\})) = \gamma(\Cay(\Z_{2n}, \{\pm 1,n\})) = \lceil \frac{n}{2}  \rceil$.
\end{lem}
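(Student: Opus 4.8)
The plan is to mirror the structure of the proof of Lemma~\ref{lem:prismfrac1}, reducing everything to a lower bound argument and then splitting into residue classes of $n$ modulo $4$. Write $G = \Cay(\Z_{2n},\{\pm 1, n\})$. Since $G$ is cubic on $2n$ vertices, the standard counting bound gives $\gamma(G) \ge \lceil 2n/4 \rceil = \lceil n/2 \rceil$. By Lemma~\ref{lem:allguardsladder} we have $\med(G) = \gamma(G)$, and Proposition~\ref{prop:basicbound} then yields $\fed(G) \le \med(G) = \gamma(G)$. Consequently it suffices to exhibit, in each case, a dominating set of size $\lceil n/2 \rceil$ (forcing $\gamma(G) = \lceil n/2 \rceil$) together with a matching lower bound $\fed(G) \ge \lceil n/2 \rceil$; the conclusion $\lceil n/2 \rceil \le \fed(G) \le \gamma(G) = \lceil n/2 \rceil$ then follows by squeezing.

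First I would record the local geometry of $G$. From $N(i) = \{i-1, i+1, i+n\}$ one checks that two vertices are at distance at most $2$ precisely when their difference in $\Z_{2n}$ lies in $\{\pm 1, \pm 2, \pm(n-1), \pm n, \pm(n+1)\}$; equivalently, a set $P \subseteq \Z_{2n}$ is a $2$-packing exactly when every pairwise cyclic distance lies in $[3, n-2]$. The three forbidden ``near-antipodal'' differences $n-1, n, n+1$ are the effect of the chords $i \sim i+n$, and are what distinguish this analysis from the untwisted prism $C_n \Box K_2$.

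For $n \equiv 2 \pmod 4$ I would take the set of multiples of four, $\{0, 4, 8, \ldots, 2n-4\}$, of size $n/2$. Because $n \equiv 2 \pmod 4$, each chord image $4k + n$ is $\equiv 2 \pmod 4$, so the closed neighbourhoods $\{4k-1, 4k, 4k+1, 4k+n\}$ tile $\Z_{2n}$; this is an efficient dominating set of size $n/2 = \lceil n/2 \rceil$. Corollary~\ref{packcor1} then gives $\fed(G) \ge \gamma(G)$, completing this case. (Note that the Möbius twist moves the efficiently-dominated class from $n \equiv 0$, as in Lemma~\ref{lem:prismfrac1}, to $n \equiv 2 \pmod 4$.) For the odd cases $n \equiv 1, 3 \pmod 4$, where $\lceil n/2 \rceil = (n+1)/2$, I would construct an explicit $2$-packing $P$ of size $(n-1)/2$ whose closed neighbourhood omits at least one vertex, placing the chosen vertices so that all pairwise cyclic distances fall in $[3, n-2]$ and, in particular, no two are near-antipodal; Proposition~\ref{prop:packing} then gives $\fed(G) \ge 1 + (n-1)/2 = \lceil n/2 \rceil$. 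Adjoining to $P$ a single vertex covering the few vertices of $V \setminus N[P]$ produces a dominating set of size $(n+1)/2 = \lceil n/2 \rceil$, which settles $\gamma(G)$.

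The main obstacle is this last step: producing, for every odd $n$, an explicit $2$-packing of the exact size $(n-1)/2$ that both avoids the near-antipodal differences $n-1, n, n+1$ introduced by the chords and fails to dominate by only enough to be repaired with a single extra vertex. Unlike the evenly spaced pattern that suffices for the cyclic prism, here the placement must ``bend'' around the cycle (for instance $\{0,4,8,13,17\}$ when $n = 11$) to keep every pair out of the antipodal band, and the precise formula will depend on $n \bmod 4$; verifying the distance conditions and the near-efficiency of the resulting set is the technical heart of the argument.
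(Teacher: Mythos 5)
Your skeleton is exactly the paper's: get the upper bound from $\fed(G) \leq \med(G) = \gamma(G)$ (Proposition \ref{prop:basicbound} plus Lemma \ref{lem:allguardsladder}), then prove the matching lower bound $\fed(G) \geq \lceil n/2 \rceil$ by Corollary \ref{packcor1} via an efficient dominating set when $n \equiv 2 \pmod 4$, and by Proposition \ref{prop:packing} via a non-dominating $2$-packing when $n$ is odd. Your $n \equiv 2 \pmod 4$ case is complete and correct: the multiples of $4$ do form an efficient dominating set of size $n/2$ (this is, up to relabelling, the paper's set $S_2$), and your characterization of $2$-packings in $\Cay(\Z_{2n},\{\pm 1,n\})$ as sets with all pairwise cyclic differences in $[3,n-2]$ is right. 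But there is a genuine gap: for $n \equiv 1, 3 \pmod 4$ you never produce the required $2$-packing. You give one ad hoc example ($n=11$) and then explicitly defer the general construction as ``the technical heart of the argument.'' That is two of the three cases, and the construction is precisely what a proof must supply; as written, the proposal proves the lemma only for $n \equiv 2 \pmod 4$.

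The gap is fillable, and more simply than your remark about needing to ``bend'' the pattern suggests. The paper's sets are two arithmetic runs of step $4$, one shifted by $n+2$: for $n \equiv 1 \pmod 4$, $S_1=\{4i: i \in [\frac{n-1}{4}]\} \cup \{4i+n+2: i \in [\frac{n-1}{4}]\}$, and for $n \equiv 3 \pmod 4$, $S_3=\{4i: i \in [\frac{n+1}{4}]\} \cup \{4i+n+2: i \in [\frac{n-3}{4}]\}$. Within each run all differences are nonzero multiples of $4$ of absolute value at most $n-5$ (hence avoid $\pm 1, \pm 2, \pm(n-1), \pm n, \pm(n+1)$), while cross-run differences are odd and bounded away from $\pm 1$ and $\pm n$ because the shift $n+2$ interleaves the runs; each $S_i$ has cardinality $\frac{n-1}{2}$ and misses two vertices, so Proposition \ref{prop:packing} applies. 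A second, smaller gap: unlike the paper, which takes $\gamma(G) = \lceil n/2 \rceil$ as known, your plan derives $\gamma(G) \leq \lceil n/2 \rceil$ in the odd cases by adjoining one vertex to $P$. A $2$-packing of size $\frac{n-1}{2}$ in a cubic graph on $2n$ vertices misses exactly two vertices, but repairing it with a single vertex requires those two vertices to lie in a common closed neighbourhood; this is true for the paper's sets (the two missed vertices are adjacent) but is another property of the explicit construction that you would have to verify, and it cannot be dispensed with, since your counting bound gives only $\gamma(G) \geq \lceil n/2 \rceil$ and the sandwich $\lceil n/2 \rceil \leq \fed(G) \leq \gamma(G)$ does not by itself pin down $\gamma(G)$.
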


\begin{proof}
By Lemma \ref{lem:allguardsladder}, it suffices to show that $\gamma(\Cay(\Z_{2n}, \{\pm 1,n\})) = \lceil\frac{n}{2}  \rceil$ is a lower bound on $\fed(\Cay(\Z_{2n}, \{\pm 1,n\}))$.  We proceed by cases:

\begin{compactenum}
\item $n \equiv 1 \pmod 4$: The set $S_1=\{4i: i \in [\frac{n-1}{4}]\} \cup \{4i+n+2: i \in [\frac{n-1}{4}]\}$ is a $2$-packing of cardinality $\frac{n-1}{2}$ which does not dominate the vertex $n-1$. Then, by Proposition \ref{prop:packing} we have $\fed(\Cay(\Z_{2n}, \{\pm 1,n\})) \geq 1+\frac{n-1}{2}=\lceil \frac{n}{2} \rceil$.

\item $n \equiv 2 \pmod 4$: Now, the set $S_2=\{4i: i \in [\frac{n+2}{4}]\} \cup \{4i+n+2: i \in [\frac{n-2}{4}]\}$ is an efficient dominating set of cardinality $\frac{n}{2}$. Hence, by Corollary \ref{packcor1} we have $\fed(\Cay(\Z_{2n}, \{\pm 1,n\})) \geq \frac{n}{2}$.

\item $n \equiv 3 \pmod 4$: Finally, the set $S_3=\{4i: i \in [\frac{n+1}{4}]\} \cup \{4i+n+2: i \in [\frac{n-3}{4}]\}$ is a $2$-packing of cardinality $\frac{n-1}{2}$ which does not dominate the vertex $n-1$. As a result, by Proposition \ref{prop:packing} we have $\fed(C_n \Box K_2) \geq 1+\frac{n-1}{2}=\lceil \frac{n}{2} \rceil$. \qedhere
\end{compactenum}
\end{proof}

\begin{lem}\label{lem:ladderfrac2}
If $n \equiv 0 \pmod 4$, then $\fed(\Cay(\Z_{2n}, \{\pm 1,n\})) < \gamma(\Cay(\Z_{2n}, \{\pm 1,n\}))$.
\end{lem}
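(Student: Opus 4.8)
The plan is to mirror the proof of Lemma~\ref{lem:prismfrac2}: bound $\fed$ from above via the connectivity bound of Theorem~\ref{conn}, and bound $\gamma$ from below by ruling out an efficient dominating set. Write $G = \Cay(\Z_{2n}, \{\pm 1, n\})$. Since $G$ is a cubic abelian Cayley graph it has $2n$ vertices and connectivity $\kappa = 3$, so Theorem~\ref{conn} gives $\fed(G) \le \frac{2n+3}{4}$. On the other hand, $G$ is $3$-regular, so every closed neighbourhood has size $4$ and $\gamma(G) \ge \lceil 2n/4 \rceil = \frac{n}{2}$, with equality precisely when $G$ admits an efficient (perfect) dominating set. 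Because $n \equiv 0 \pmod 4$ makes $\frac{n}{2}$ an even integer, it suffices to prove that $G$ has no efficient dominating set: this yields $\gamma(G) \ge \frac{n}{2} + 1 > \frac{n}{2} + \frac{3}{4} = \frac{2n+3}{4} \ge \fed(G)$, the desired strict inequality.

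To show no efficient dominating set exists, I would suppose $S$ is one and, after a rotation, assume $0 \in S$. Since $S$ is a $2$-packing, consecutive elements of $S$ along the cycle $0,1,\ldots,2n-1$ are at cyclic distance at least $3$; write the gaps as $g_1, \ldots, g_{n/2}$, so $\sum_j g_j = 2n$ with each $g_j \ge 3$. In a gap of length $g_j$ the two positions adjacent to code vertices are dominated along the cycle, while each of the remaining $g_j - 3$ \emph{interior} positions $q$ has neither cycle-neighbour in $S$ and so must be dominated through its chord, forcing $q + n \in S$. A count shows there are exactly $\sum_j (g_j - 3) = \frac{n}{2}$ interior positions and exactly $\frac{n}{2}$ chord-dominations (one per code vertex), so efficiency forces the set $I$ of interior positions to coincide with the chord image $S + n$. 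As $S+n$ is again a $2$-packing it contains no two positions at cyclic distance at most $2$, whereas $I$ is a disjoint union of runs of consecutive positions; hence every run has length at most $1$, i.e. $g_j \in \{3,4\}$ for all $j$. A short count ($\sum_j g_j = 2n$ over $\frac{n}{2}$ gaps) then forces every gap to equal $4$, so $S = \{0, 4, 8, \ldots, 2n-4\}$ and $I = \{2, 6, 10, \ldots, 2n-2\}$.

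The contradiction is then a residue computation modulo $4$: every element of $I$ is congruent to $2 \pmod 4$, whereas every element of $S + n = \{n, n+4, \ldots\}$ is congruent to $n \pmod 4$. When $n \equiv 0 \pmod 4$ these classes are disjoint, so $I \neq S+n$, contradicting the condition derived above; hence $G$ has no efficient dominating set. (Reassuringly, when $n \equiv 2 \pmod 4$ the two residue classes agree and $S = \{0,4,\ldots\}$ is genuinely a perfect code, matching the case $n \equiv 2 \pmod 4$ of Lemma~\ref{lem:ladderfrac1}.) I expect the main obstacle to be the bookkeeping in the middle step---carefully justifying that each interior position must be chord-dominated and that this, together with the $2$-packing property of $S+n$, forces uniform gap $4$. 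An alternative is the explicit ``follow the forced code vertices around the cycle'' propagation used in Lemma~\ref{lem:prismfrac2}, but the gap-and-residue formulation seems cleaner and makes the dependence on $n \bmod 4$ transparent.
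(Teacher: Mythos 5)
Your proposal is correct, and its outer skeleton is exactly the paper's: bound $\fed$ from above by $\frac{2n+3}{4}$ via Theorem \ref{conn}, and bound $\gamma$ from below by showing the graph admits no efficient dominating set, so that $\gamma \geq \frac{n}{2}+1 > \frac{2n+3}{4} \geq \fed$. (The paper leaves the step ``no efficient dominating set $\Rightarrow \gamma \geq \frac{n}{2}+1$'' implicit; you spell it out, which is a genuine improvement in rigor.) Where you diverge is in proving that no efficient dominating set exists. The paper argues by forced propagation: with $0 \in S_0$, the packing and domination conditions force $n+2 \in S_0$, then $4 \in S_0$, and so on around the graph, until the wrap-around produces two forced code vertices whose closed neighbourhoods intersect. (As written, the paper's concluding pair $0$ and $n-2$ appears to be a slip, since those vertices are at distance $3$; the forced set in fact contains, e.g., both $0$ and $n$, which are adjacent through a chord, or both $0$ and $2$.) You instead argue globally: count the gaps along the cycle, show the set $I$ of interior positions must coincide with $S+n$, force every gap to equal $4$, and derive the contradiction from residues modulo $4$ --- which is well defined on $\Z_{2n}$ precisely because $4 \mid n$. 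Both arguments are valid; the paper's propagation is more local and constructive, while your gap-and-residue formulation has cleaner bookkeeping, makes the dependence on $n \bmod 4$ transparent, and explains in passing why $n \equiv 2 \pmod 4$ genuinely yields a perfect code, consistent with case 2 of Lemma \ref{lem:ladderfrac1}.
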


\begin{proof}
Observe that $\fed(\Cay(\Z_{2n}, \{\pm 1,n\})) \leq \frac{2n+3}{4}$ follows from Theorem \ref{conn}. So, it suffices to prove that $\gamma(C_n \Box K_2) > \frac{2n+3}{4}$. To this end, we prove that $\Cay(\Z_{2n}, \{\pm 1,n\})$ does not contain any efficient dominating set. Suppose this is not true and let $S_0$ be an efficient dominating set of $\Cay(\Z_{2n}, \{\pm 1,n\})$. Without loss of generality, let $0$ be a vertex of $S_0$. Since $S_0$ is a $2$-packing, $S_0$ must contain the vertices $n+2$ and all the vertices $\{4i: i \in [\frac{n}{4}]\} \cup \{4i+n+2: i \in [\frac{n}{4}]\}$, in which case the vertices $0$ and $n-2$ would be two vertices of $S_0$ with non disjoint neighbourhood.
\end{proof}

We may now complete the proof of Theorem \ref{thm:Cayleyfrac}.

\begin{proof}[Proof of Theorem \ref{thm:Cayleyfrac}]
The result now follows immediately as a consequence of Theorem \ref{thm:Cayleyclassification} and Lemmas \ref{lem:Cayleysimple}, \ref{lem:prismfrac1}, \ref{lem:prismfrac2}, \ref{lem:ladderfrac1}, and \ref{lem:ladderfrac2}.
\end{proof}

\subsection{Further results on cyclic and M\"obius prisms}

Determining the exact values for the exceptional cases of Theorem \ref{thm:Cayleyfrac} appears to be surprisingly difficult, and we leave this as an open problem for future research. We conclude with a look at three particular cases of cyclic and M\"obius prisms -- $C_{6} \Box K_2$, $C_{10} \Box K_2$, and $Cay(\mathbb{Z}_{8}, \{\pm 1, 4\})$ -- whose exact values of $\fed$ are not given by Theorem \ref{thm:Cayleyfrac}, and obtain lower bounds for general graphs from some of these exceptional cases.

\begin{figure}[h!]
    \centering
\scalebox{0.7}{
\begin{tikzpicture}

\tikzset{vertex/.style = {shape=circle,draw,minimum size=1.2em}}
\tikzset{edge/.style = {> = latex'}}

\node[vertex,label=above:{$1$}] (a) at (7,5) {$a$};
\node[vertex,label=left:{$0$}] (b) at  (2,3) {$b$};
\node[vertex,label=left:{$x$}] (c) at  (2,-3) {$c$};
\node[vertex,label=below:{$1/2 - x$}] (d) at (7,-5) {$d$};
\node[vertex,label=right:{$x$}] (e) at (12,-3) {$e$};
\node[vertex,label=right:{$0$}] (f) at (12,3) {$f$};

\node[vertex,label=below:{$1/2$}] (g) at (7,3.5) {$g$};
\node[vertex,label=right:{$0$}] (h) at  (4,2.25) {$h$};
\node[vertex,label=right:{$1/2$}] (i) at (4,-2.25) {$i$};
\node[vertex,label=above:{$1/2 - x$}] (j) at (7,-3.5) {$j$};
\node[vertex,label=left:{$1/2$}] (k) at (10,-2.25) {$k$};
\node[vertex,label=left:{$0$}] (l) at  (10,2.25) {$l$}; 


\draw[edge] (a) to (b);
\draw[edge] (b) to (c);
\draw[edge] (c) to (d);
\draw[edge] (d) to (e);
\draw[edge] (e) to (f);
\draw[edge] (f) to (a);

\draw[edge] (g) to (h);
\draw[edge] (h) to (i);
\draw[edge] (i) to (j);
\draw[edge] (j) to (k);
\draw[edge] (k) to (l);
\draw[edge] (l) to (g);

\draw[edge] (a) to (g);
\draw[edge] (f) to (l);
\draw[edge] (e) to (k);
\draw[edge] (d) to (j);
\draw[edge] (c) to (i);
\draw[edge] (b) to (h);

\end{tikzpicture}
}
    \caption{An initial weighting of $C_6 \Box K_2$}
    \label{fig:C6K2}
\end{figure}

It can be checked using an LP solver (or tedious manual calculations), that a solution to the initial configuration LP requires a total weight of at least $7/2$.  We now prove that this is insufficient.


\begin{thm}
$\frac{7}{2} < \fed(C_6 \Box K_2) \leq 4$
\end{thm}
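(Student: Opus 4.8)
The plan is to prove the two bounds separately: the upper bound is routine, while the strict lower bound requires the linear-programming and flow machinery of Section \ref{sec:linprog}.

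For the upper bound I would invoke Proposition \ref{prop:basicbound} together with Lemma \ref{lem:allguardsprism}, which give $\fed(C_6 \Box K_2) \le \med(C_6 \Box K_2) = \gamma(C_6 \Box K_2)$. It then suffices to show $\gamma(C_6 \Box K_2) = 4$. The lower bound $\gamma \ge 4$ is already contained in the computation of Lemma \ref{lem:prismfrac2} (there is no efficient dominating set and $\gamma > \frac{2n+3}{4} = \frac{15}{4}$), while the set $\{(0,0),(3,0),(2,1),(5,1)\}$ is a dominating set of size $4$, giving $\gamma \le 4$. Hence $\fed(C_6 \Box K_2) \le 4$.

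For the lower bound, the text already notes that the initial-configuration LP has optimum $\tfrac72$, so $\fed \ge F(G) \ge \tfrac72$; the work is to rule out equality. I would argue by contradiction: suppose $\fed(C_6 \Box K_2) = \tfrac72$. Then $C_6 \Box K_2$ admits an eternal strategy of total weight exactly $\tfrac72$, and every configuration arising in it is an optimal solution of the initial-configuration LP, namely a fractional dominating function of total weight $\tfrac72$ placing weight at least $1$ on the just-attacked vertex. The first step is to characterize these optimal configurations: using complementary slackness (or a direct argument on which domination constraints must be tight) together with the vertex-transitivity of $C_6 \Box K_2$, I would show that, up to automorphism, every such configuration is exactly the weighting of Figure \ref{fig:C6K2} for some $x \in [0,\tfrac12]$, with the attacked vertex normalized to $a$. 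With this normal form in hand, the contradiction comes from the flow characterization of reconfiguration (the lemma preceding Proposition \ref{nsets}). I would attack the outer antipode $d$, whose closed neighbourhood carries total weight exactly $w(N[d]) = (\tfrac12 - x) + x + x + (\tfrac12 - x) = 1$. Since the only weight that can reach $d$ in a single reconfiguration step is the weight currently in $N[d]$, responding forces every vertex of $N[d]$ to send all of its weight to $d$, emptying the neighbours $c,e,j$. I would then show that no target configuration $w'$ with $w'(d) \ge 1$, total weight $\tfrac72$, and the required normal form can be reached: the emptying of $c,e,j$ destroys the tight domination constraints they supported, and repairing them within budget would require a feasible $s$–$t$ flow of value $\tfrac72$ in $N_{w,w'}$ that does not exist, certified by an explicit cut separating the copies of $d$ and the starved vertices from $t$ whose capacity is strictly below $\tfrac72$.

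The main obstacle is concentrated in these last two steps. First, proving that the family in Figure \ref{fig:C6K2} is the \emph{complete} set of optimal configurations (rather than merely one example) demands a careful complementary-slackness analysis over the orbits of the automorphism group, since an incomplete characterization would let the defender dodge the fatal attack by choosing a different optimal state. Second, exhibiting the cut that certifies the reconfiguration flow has value below $\tfrac72$ is the delicate combinatorial core: one must track precisely which vertices are forced to empty, which tight constraints they were responsible for, and verify that no redistribution along the edges of the prism can simultaneously feed the attacked vertex and restore domination without exceeding the weight budget. Once both are established, the resulting contradiction yields $\fed(C_6 \Box K_2) > \tfrac72$, completing the proof.
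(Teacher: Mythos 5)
Your upper bound is fine, and your overall plan for the lower bound --- characterize every total-weight-$\frac{7}{2}$ configuration as the one-parameter family of Figure \ref{fig:C6K2}, then exhibit an attack the defender cannot answer --- is exactly the paper's strategy (its Claims 1--5 are your ``normal form'' step). The gap is in your choice of attack: an attack on the outer antipode $d$ \emph{can} be defended within total weight $\frac{7}{2}$, so the cut you hope to exhibit does not exist. Concretely, from the configuration of Figure \ref{fig:C6K2} the defender responds as follows: $c$, $e$, $j$ send all their weight ($x$, $x$, $\frac{1}{2}-x$) to $d$, which keeps its own $\frac{1}{2}-x$; $i$ sends $\frac{1}{4}$ to $j$ and $\frac{1}{4}$ to $h$; $k$ sends $\frac{1}{4}$ to $j$ and $\frac{1}{4}$ to $l$; $g$ sends $\frac{1}{4}$ to each of $h$ and $l$; and $a$ sends $\frac{1}{2}$ to $g$. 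The result places weight $1$ on $d$, weight $\frac{1}{2}$ on each of $a,g,h,j,l$, and $0$ elsewhere; one checks directly that this is a fractional dominating function of total weight $\frac{7}{2}$, and it is precisely your normal form centred at $d$ (with parameter $0$). Emptying $c,e,j$ therefore destroys nothing that cannot be repaired: the forced-empty spoke partner of $d$ is $j$, which needs only $\frac{1}{2}$ afterwards and can be refilled from $i$ and $k$, which hold $\frac{1}{2}$ each and are required to empty anyway.

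The paper's attack exploits the opposite asymmetry. For $x \in (0,\frac{1}{2}]$ it attacks $c$: since $w(N[c]) = x + 0 + (\frac{1}{2}-x) + \frac{1}{2} = 1$, all of $N[c]$'s weight is forced into $c$, emptying the spoke partner $i$; but the normal form centred at $c$ requires $i$ to end with weight $\frac{1}{2}$, and the only weight that can now reach $i$ sits on $j$, which holds $\frac{1}{2}-x < \frac{1}{2}$ --- a genuine contradiction. The remaining case $x=0$ needs a separate argument, which the paper gives by attacking $j$; your single attack on $d$ handles neither case, since the response above works for every $x \in [0,\frac{1}{2}]$. So the proposal fails at its core step; what is needed is to replace the attack on $d$ by the attack on $c$ for $x>0$, together with a separate treatment of $x=0$.
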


\begin{proof}
Let $G = C_6 \Box K_2$, with the vertices labelled as in Figure \ref{fig:C6K2}.  The upper bound follows from Theorem \ref{conn}, as $G$ is $3$-regular and $3$-connected.  the fact that $\med(G) = 4$.  Suppose that $\fed(C_6 \Box K_2) = \frac{7}{2}$ and let $w$ be a feasible initial weight function.  For a set $S \subseteq V(G)$, we let $w(S) = \sum_{v \in S} w(v)$.  Suppose, without loss of generality, that $a$ is the vertex to initially receive weight $1$.  The next five claims show that the only possible initial weightings are those given in Figure \ref{fig:C6K2}.\\

\noindent \textit{Claim 1:} $w(g) \geq \frac{1}{2}$. \\
\begin{proofclaim}
The total weight assigned to $V(G) \setminus \{a\}$ is $\frac{5}{2}$.  Furthermore, $w(N[d]) + w(N[h]) + w(N[l]) \geq 3$, and so $w(g) \geq \frac{1}{2}$.
\end{proofclaim}

\noindent \textit{Claim 2:} $w(b) = w(f) = w(h) = w(l) = 0$. \\
\begin{proofclaim}
 First note that $w(N[c]) + w(N[e]) + w(N[i]) + w(N[k]) \geq 4$, and that the sum on the left counts the weight of every vertex in $\{c,d,e,i,j,k\}$ twice.  It follows that
\begin{align*}
   & w(N[c]) + w(N[e]) + w(N[i]) + w(N[k]) + 2w(g) \geq 5 \\
    \implies& 2w(V(G) \setminus \{a\}) - [w(b) + w(f) + w(h) + w(l)]\geq 5 \\
    \implies& 5 - [w(b) + w(f) + w(h) + w(l)]\geq 5 \\
    \implies& w(b) = w(f) = w(h) = w(l) = 0
\end{align*}
as desired.
\end{proofclaim}

\noindent \textit{Claim 3:} $w(g) = \frac{1}{2}$. \\
\begin{proofclaim}
Now, we have that
$$ w(c) + w(d) + w(e) + w(g) + w(i) + w(j) + w(k) = \frac{5}{2}$$
or, equivalently,
$$w(N[i]) + w(N[e]) + w(g) = \frac{5}{2}.$$
However, since $w(N[i])$ and $w(N[e])$ must each be at least $1$, and $N[i]$ and $N[e]$ are disjoint, we have that $w(g) \leq \frac{1}{2}$ and so $w(g) = \frac{1}{2}$.
\end{proofclaim}

\noindent \textit{Claim 4:} $w(i) = w(k) = \frac{1}{2}$. \\
\begin{proofclaim}
By considering $N(h)$ and $N(l)$, we see that $w(i) \geq \frac{1}{2}$ and $w(k) \geq \frac{1}{2}$.  However, since $w(N[d]) \geq 1$ and $w(N[d]) + w(i) + w(k) = 2$, we have that $w(i) = w(k) = \frac{1}{2}$.
\end{proofclaim}

\noindent \textit{Claim 5:} For some $x \in [0,\frac{1}{2}]$, $w(c) = w(e) = x$ and $w(d) = w(j) = \frac{1}{2} - x$. \\
\begin{proofclaim}
By considering $N(c), N(e), N(i),$ and $N(k)$, we get that each of the $w(c) + w(d)$, $w(d) + w(e)$, $w(c) + w(j)$, $w(e) + w(j)$ is at least $1/2$.  
\end{proofclaim}

Now, we show that, for any value of $x \in [0,\frac{1}{2}]$, there is a vertex in $G$ which can only be responded to in such a way as to no longer have a fractional dominating function on $V(G)$.


Suppose $x \in (0, \frac{1}{2}]$ and consider an attack on vertex $c$. We need to move the weight of $\frac{1}{2}$ from vertex $i$ and the weight of $\frac{1}{2}-x$ from vertex $d$ to vertex $c$ in order to respond to the attack. Since there is only a weight of $\frac{1}{2}-x$ on $j$, it is impossible to maintain a weight of $\frac{1}{2}$ on vertex $i$ after the response to the attack (which contradicts Claim $1$).

Suppose now that $x=0$ and consider an attack on vertex $j$. According to Claims $1-5$, there must be a weight of $1$ on $j$, $\frac{1}{2}$ on $d$ and $0$ on each of the vertices $c, e, h, i, k, l$ after a response to the attack. However, since vertex $j$ has a total weight of $2$ in its closed neighbourhood, the total weight on the set of vertices at distance at most $2$ from vertex $j$ will be at least $2$. Therefore, it is impossible to maintain a weight of $1$ on $j$, $\frac{1}{2}$ on $d$ and $0$ on each of the vertices $c, e, h, i, k, l$ after any response to the attack.
\end{proof}

\begin{thm}
If $n \equiv 10 \pmod {12}$, then $\fed(C_n \Box K_2) \geq \frac{(n+2)(n+4)}{2(n+5)}$.
\end{thm}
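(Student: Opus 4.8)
The plan is to leverage the inequality $\fed(G) \ge F(G)$ proved earlier, together with the fact that $C_n \Box K_2$ is vertex-transitive (it is a Cayley graph), so that $f(v)$ is the same for every vertex and $F(G)=f(z)$ for any fixed $z$. It therefore suffices to show that every fractional dominating function $w$ with $w(z) \ge 1$ — where $z=(0,0)$ is the first vertex attacked, chosen without loss of generality — has total weight at least $\frac{(n+2)(n+4)}{2(n+5)}$. I would establish this by taking a suitable nonnegative combination of the domination constraints $w(N[u]) \ge 1$ and the constraint $w(z)\ge 1$; equivalently, by exhibiting a feasible solution to the dual of the initial-configuration LP, namely a fractional $2$-packing $y \colon V \to \R_{\ge 0}$ and a scalar $\mu \ge 0$ with $\sum_{v \in N[u]} y(v) + \mu\,[u=z] \le 1$ for every $u$, whose objective $\sum_v y(v) + \mu$ equals the claimed value.

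The base point of the construction is the uniform assignment $y \equiv \tfrac14$, which is feasible (each closed neighbourhood has four vertices) and has value $\tfrac{2n}{4}=\tfrac n2 = \gamma_f(G)$; by itself it forces $\mu=0$ and yields nothing beyond $\gamma_f$. The improvement comes from a perturbation $y(v) = \tfrac14 + \varepsilon(v)$ that is negative on a neighbourhood of $z$ — opening slack in the constraint at $u=z$ so that $\mu = -\sum_{v \in N[z]}\varepsilon(v) > 0$ — and positive elsewhere. Writing out the objective, the net gain over $\gamma_f$ equals $\sum_{v \notin N[z]}\varepsilon(v)$, to be maximized subject to the discrete ``subaveraging'' constraints $\sum_{v \in N[u]} \varepsilon(v) \le 0$ for all $u\neq z$ and $\varepsilon(v) \ge -\tfrac14$. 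I would seek the optimal $\varepsilon$ among functions respecting the reflection symmetries of the prism that fix $z$, so that $\varepsilon$ depends essentially on the cyclic coordinate alone and satisfies a linear recurrence in $i$, with $\varepsilon \ge -\tfrac14$ driven to equality on $N[z]$. The hypothesis $n \equiv 10 \pmod{12}$ is exactly what makes the period-$4$ packing pattern and the period-$3$ domination pattern close up consistently after wrapping around a cycle of length $n$; imposing this closure (a boundary condition at the antipode of $z$) pins down the free parameters, and solving the recurrence produces the gain $\frac{n+8}{2(n+5)}$, so that the total is $\frac n2 + \frac{n+8}{2(n+5)} = \frac{(n+2)(n+4)}{2(n+5)}$.

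Once such a pair $(y,\mu)$ is in hand, the bound follows cleanly: for any admissible $w$,
$$\sum_{v} w(v) \ \ge\ \sum_u y(u)\,w(N[u]) + \mu\,w(z) \ \ge\ \sum_u y(u) + \mu \ =\ \frac{(n+2)(n+4)}{2(n+5)},$$
where the first inequality uses $\sum_{u \in N[v]} y(u) + \mu[v=z] \le 1$ together with $w \ge 0$, and the second uses $w(N[u]) \ge 1$ and $w(z)\ge 1$. As this holds for the optimal initial configuration, $F(G) \ge \frac{(n+2)(n+4)}{2(n+5)}$, and hence so is $\fed(G)$.

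I expect the main obstacle to be the modular bookkeeping in the second paragraph: writing down the recurrence governing $\varepsilon$, identifying which constraints are tight (the ``binding ramp''), and checking that the closure condition is satisfiable precisely when $n \equiv 10 \pmod{12}$ while producing the exact rational gain $\frac{n+8}{2(n+5)}$ rather than something off by a lower-order term. A secondary risk is that the static quantity $F(G)$ might fall short of the true value — as happens for $C_6 \Box K_2$, where an additional reconfiguration argument across several attacks is required — in which case the same ramp would have to be upgraded to a two-configuration relaxation using the flow characterization of reconfiguration from Section~\ref{sec:linprog}; however, for the residue class $n \equiv 10 \pmod{12}$ I expect the single-configuration dual above to already certify the stated inequality.
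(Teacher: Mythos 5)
Your dual/LP framework is set up correctly, and the appeal to vertex-transitivity is fine; the fatal problem is that the quantity you reduce to, $F(G)$, is strictly smaller than the bound to be proved, so the certificate $(y,\mu)$ you plan to construct does not exist. This is already visible at $n=10$, a case the theorem covers, where the claimed bound is $\tfrac{28}{5}$. Label the two cycles $v_0,\dots,v_9$ and $u_0,\dots,u_9$ with rungs $v_ju_j$, and consider the weight function with $w(v_0)=1$, $w=\tfrac12$ on the nine vertices $u_0,u_2,u_3,u_5,u_7,u_8,v_3,v_5,v_7$, and $w=0$ elsewhere. Every closed neighbourhood then has weight at least $1$ (for instance $w(N[v_1])=w(v_0)=1$, $w(N[v_2])=w(v_3)+w(u_2)=1$, $w(N[v_4])=w(v_3)+w(v_5)=1$, $w(N[u_1])=w(u_0)+w(u_2)=1$, $w(N[u_4])=w(u_3)+w(u_5)=1$, $w(N[u_9])=w(u_8)+w(u_0)=1$, and the remaining checks are symmetric), so this is a fractional dominating function with weight $1$ on the attacked vertex and total weight $\tfrac{11}{2}$. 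Hence $F(C_{10}\Box K_2)\le\tfrac{11}{2}<\tfrac{28}{5}$, and by weak duality every feasible pair $(y,\mu)$ in your dual has value at most $\tfrac{11}{2}$; no choice of ``ramp'' $\varepsilon(v)$, however cleverly tuned to the congruence, can reach $\tfrac{28}{5}$. Your closing paragraph flagged exactly this risk but guessed the wrong side: the gain you need, $\frac{n+8}{2(n+5)}$, is strictly larger than $\tfrac12$ for every $n$, while the static single-configuration LP caps out at a gain of $\tfrac12$ here (indeed $\tfrac{11}{2}$ is the exact optimum of your LP for $n=10$, and the same half-integral pattern generalizes), so the shortfall is structural rather than an artifact of small cases.

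The paper's proof gets past this by using the dynamics, in two ways your static relaxation cannot. First, it parametrizes the total weight as $\frac{2n+3}{4}-\epsilon$ using the upper bound of Theorem \ref{conn}, and a packing count (valid since $n\equiv 2\pmod 4$) shows that in any configuration answering an attack at $v_j$, the partner $u_j$ must carry weight at least $\tfrac14+\epsilon$ --- a lower bound that gets \emph{stronger} as the assumed total weight gets smaller, which no fixed dual certificate can express. Second, it looks one attack ahead: since the defender must be able to answer a next attack at $v_{3i-1}$, and all weight that ends up on $\{v_{3i-1},u_{3i-1}\}$ in that response must come from $N[\{v_{3i-1},u_{3i-1}\}]$, the \emph{current} configuration must already carry at least $\tfrac54+\epsilon$ on each such closed neighbourhood. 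Because $n\equiv 1\pmod 3$, these $\frac{n-1}{3}$ neighbourhoods together with $\{v_0,u_0\}$ partition $V$, and summing gives $\epsilon\le\frac{n-1}{4(n+5)}$, i.e.\ exactly the stated bound. To salvage your approach you would have to incorporate precisely this coupling between two successive configurations (the ``two-configuration relaxation'' you mention only as a fallback); the single-configuration dual alone provably certifies no more than $F(G)$, which falls short.
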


\begin{proof}
It is known from Corollary \ref{regconn} that $\frac{2n+3}{4}$ is an upper bound on the fractional eternal domination number of $C_n \Box K_2$. Let $\epsilon \geq 0$ be a real number such that a total weight of $\frac{2n+3}{4}-\epsilon$ can dominate $C_n \Box K_2$. Let us consider an initial feasible weight function $w$. We may assume without loss of generality that vertex $v_0$ receives a weight of $1$. Let $S=\{v_{4i-1}: i \in [\frac{n}{4}]\} \cup \{u_{4i-3}: i \in [\frac{n}{4}]\}$, then $\sum_{v \in S} w(N[v])=(\sum_{v \in V} w(v))-w(v_0)-w(u_0)$. Since $\sum_{v \in S} w(N[v]) \geq \frac{n}{2}$, we have $(\sum_{v \in V} w(v))-w(v_0)-w(u_0) \geq \frac{n}{2} \implies w(u_0) \geq \frac{1}{4}+\epsilon$. This means that for any integer $i \in [\frac{n-1}{3}]$, the sum of the weight in the neighbourhood of the set $\{v_{3i-1}, u_{3i-1}\}$ is at least $\frac{5}{4}+\epsilon$. Hence, $(\frac{n-1}{3})(1+\frac{1}{4}+\epsilon) \leq \frac{2n+3}{4}-\epsilon-1-\frac{1}{4}-\epsilon \implies \epsilon \leq \frac{n-1}{4(n+5)}$. As a result, $\fed(C_n \Box K_2) \geq (\frac{n+2}{3})(1+\frac{1}{4}+\frac{(n-1)}{4(n+5)})=\frac{(n+2)(n+4)}{2(n+5)}$. 
\end{proof}

For the specific $n=10$ case, an exact value can be computed.

\begin{thm}
$\fed(C_{10} \Box K_2) = \frac{28}{5}$.
\end{thm}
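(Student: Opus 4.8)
The plan is to prove $\fed(C_{10} \Box K_2) = \frac{28}{5}$ by combining a lower bound from the previous theorem with a matching upper bound via an explicit guarding strategy. For the lower bound, I would apply the theorem immediately preceding this one with $n = 10$, noting that $10 \equiv 10 \pmod{12}$. Substituting into $\frac{(n+2)(n+4)}{2(n+5)}$ gives $\frac{12 \cdot 14}{2 \cdot 15} = \frac{168}{30} = \frac{28}{5}$, so $\fed(C_{10} \Box K_2) \geq \frac{28}{5}$ is free. The entire content of the proof therefore lies in establishing the upper bound $\fed(C_{10} \Box K_2) \leq \frac{28}{5}$.

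For the upper bound I would exhibit an explicit finite collection of fractional dominating functions, each of total weight $\frac{28}{5}$, and show that the defender can reconfigure among them in response to any attack. First I would fix a labelling of the $20$ vertices of $C_{10} \Box K_2$, writing them as $v_0, \ldots, v_9$ on one copy of $C_{10}$ and $u_0, \ldots, u_9$ on the other, with $v_i \sim u_i$. I would then propose a ``base'' weighting in which the attacked vertex carries weight $1$, and the remaining weight $\frac{28}{5} - 1 = \frac{23}{5}$ is spread over the other vertices in a highly symmetric pattern respecting the rotational and reflective automorphisms of the prism. The crucial structural observation from the lower-bound proof is that the weight cannot be spread perfectly uniformly at $\frac{1}{\kappa+1} = \frac{1}{4}$ (which would give only $\frac{2n+3}{4} = \frac{23}{4}$); instead, a slightly larger reserve must be maintained near the ``antipodal'' vertex $u_0$, reflecting the constraint $w(u_0) \geq \frac{1}{4} + \epsilon$ derived there. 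I would design the target configuration so that this excess is carried consistently and can be shuttled around the cycle as attacks migrate.

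The key steps, in order, would be: (i) verify that the proposed base configuration is a fractional dominating function of total weight exactly $\frac{28}{5}$ by checking $w(N[x]) \geq 1$ for each vertex $x$, exploiting symmetry to reduce to a constant number of orbit representatives; (ii) describe, for each type of attacked vertex (distinguished by distance from the current weight-$1$ vertex and by which $C_{10}$-copy it lies in), an explicit flow redistributing weights from the current configuration to a configuration of the same type centred at the attacked vertex; and (iii) invoke the flow-feasibility criterion of the reconfiguration lemma (or simply exhibit the routing along cycle edges and rungs) to confirm that each move is legal, i.e.\ that no vertex sends out more than it holds. Because every target configuration is a rotation/reflection of a single template, closure of the strategy under attacks follows once these local moves are checked.

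The main obstacle I expect is step (ii): finding a single symmetric initial weighting for which the reconfiguration flows actually exist for \emph{every} attack while keeping the total at exactly $\frac{28}{5}$. The lower-bound argument shows the budget is tight, so there is essentially no slack; the weighting must be chosen so that after responding to an attack one lands back in the same family, and the delicate point is routing the extra $\epsilon = \frac{1}{20}$ of reserve weight (beyond the uniform $\frac{1}{4}$) from its old antipodal position to the new one without ever violating a capacity constraint on an intermediate vertex. I anticipate this forces a careful case analysis according to $n \equiv 10 \pmod{12}$-type parity structure specialised to $n=10$, and the honest bookkeeping of which vertices hand off weight along the two $C_{10}$ copies and the rungs is where the real work sits. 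If a direct symmetric template proves too rigid, a fallback would be to set up linear program $A$ from Section~\ref{sec:linprog} for this specific graph and certify that its optimum equals $\frac{28}{5}$, though presenting an explicit strategy is cleaner and more illuminating.
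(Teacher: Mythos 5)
Your lower-bound half is correct, and it is in fact a cleaner route than the paper's: since $10 \equiv 10 \pmod{12}$, the preceding theorem with $n=10$ gives $\fed(C_{10}\Box K_2) \geq \frac{12\cdot 14}{2\cdot 15} = \frac{28}{5}$ at once, whereas the paper re-derives this bound from scratch for $n=10$ (forcing $w(u_0)\geq \frac{2}{5}+\epsilon$ via the five closed neighbourhoods $N[u_1], N[v_3], N[u_5], N[v_7], N[u_9]$, which cover $V\setminus\{v_0\}$ and overlap only at $u_0$, and then considering attacks on $v_2, v_5, v_8$ --- essentially the general proof specialized to $n=10$).

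The genuine gap is the upper bound, which you yourself flag as ``where the real work sits'' but never carry out. Proving $\fed(C_{10}\Box K_2)\leq \frac{28}{5}$ requires exhibiting a concrete initial weighting of total weight $\frac{28}{5}$ together with legal flows answering every attack and landing back inside a closed family of configurations; your proposal stops at a description of what such a construction should look like. The paper supplies exactly this data: an initial configuration with $w(v_0)=1$, $w(u_0)=\frac{2}{5}$, weights $0$ on $v_1, v_9$, and a pattern of $\frac{1}{5}$'s and $\frac{2}{5}$'s elsewhere, plus appendix figures and a table giving the edge-by-edge response flow for each attack (up to symmetry). Two of your guiding assumptions would also mislead you if you tried to execute the plan. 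First, the forced excess at the antipodal vertex is $w(u_0)\geq \frac{1}{4}+\frac{3}{20}=\frac{2}{5}$, since the slack below the uniform bound is $\frac{23}{4}-\frac{28}{5}=\frac{3}{20}$, not your $\frac{1}{20}$. Second, and more seriously, your ansatz that every configuration in the strategy is an automorphic image (rotation/reflection/cycle-swap) of a single template is \emph{not} satisfied by the paper's solution: any automorphism sending $v_0$ to $v_2$ sends $\{v_1,v_9\}$ to $\{v_1,v_3\}$, so a translated template centred at $v_2$ would have $w(v_1)=w(v_3)=0$, yet the paper's configuration after an attack on $v_2$ has $w(v_3)=\frac{1}{5}$. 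The strategy genuinely needs several distinct configuration types, with reconfigurability verified among all of them, and that case analysis is precisely the missing content. Your fallback of solving linear program $A$ for this one graph would indeed close the gap (its optimum is an upper bound on $\fed$), but it, too, is only named, not performed.
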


\begin{proof}
Let $\epsilon \geq 0$ be such that a total weight of $\frac{28}{5}-\epsilon$ can dominate the graph. We may assume without loss of generality that vertex $v_0$ receives a weight of $1$. Now, $w(N[u_1])+w(N[v_3])+w(N[u_5])+w(N[v_7])+w(N[u_9]) \geq 5 \implies w(V \backslash \{v_0\})+w(u_0) \geq 5 \implies w(u_0) \geq 5-\frac{23}{5}+\epsilon=\frac{2}{5}+\epsilon$. If there is an attack on vertex $v_2$, we must move a weight of $1$ to that vertex and a weight of $\frac{2}{5}+\epsilon$ to vertex $u_2$. So, there must be a total weight of at least $\frac{7}{5}+\epsilon$ in the neighbourhood of the vertices $v_2$ and $u_2$. Since the same argument holds for the vertices $v_5$ and $v_8$, we have $3(\frac{7}{5}+\epsilon) \leq \sum\limits_{i=1}^9 w(v_i) + w(u_i)
\leq \frac{28}{5}-\epsilon-1-\frac{2}{5}-\epsilon=\frac{21}{5}-2\epsilon \implies 5\epsilon \leq 0$. On the other hand, Figures \ref{Figure:C10}, \ref{Figure:C10-1} and \ref{Figure:C10-2} along with Table \ref{Table:C10} (see the Appendix) show an initial feasible weight function with total weight $\frac{28}{5}$ and a response to all possible attacks on the vertices of the graph.
\end{proof}

We move on to exact values and bounds for special classes of M\"obius prisms.

\begin{thm}
$\fed(Cay(\mathbb{Z}_{8}, \{\pm 1, 4\})) = \frac{8}{3}$.
\end{thm}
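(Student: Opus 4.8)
The plan is to prove matching bounds: the upper bound by an explicit rotation-invariant family of weightings, and the lower bound by reducing to that same symmetric family and invoking a reconfiguration (flow) constraint that the static domination bound cannot detect.

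First I would record the structure. Here $G=\Cay(\Z_8,\{\pm1,4\})$ joins vertex $i$ to $i\pm1$ and $i+4$; it is the cubic, $3$-connected, vertex-transitive M\"obius prism $\Cay(\Z_{2n},\{\pm1,n\})$ with $n=4$. By Theorem~\ref{thm:Cayleyfrac} it is one of the exceptional graphs, so $\fed(G)<\gamma(G)=\med(G)=3$, and Theorem~\ref{conn} already gives $\fed(G)\le\tfrac{11}{4}$. Since $G$ has diameter $2$, every $2$-packing has size $1$, so Proposition~\ref{prop:packing} and Corollary~\ref{packcor1} give nothing here; moreover the static bound $F(G)$ equals $\tfrac73$ (with a weight-$1$ vertex, its non-neighbours $\{2,3,5,6\}$ are dominated by weight $\tfrac13$ on each), which is strictly below $\tfrac83$. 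This signals that the reconfiguration dynamics, not a single fractional dominating set, must drive the lower bound.

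For the upper bound I would exhibit the reflection-symmetric weighting $X_0=(1,a,b,c,d,c,b,a)$ on vertices $0,\dots,7$ with $a=\tfrac13$, $b=\tfrac13$, $c=0$, $d=\tfrac13$, of total weight $1+2a+2b+2c+d=\tfrac83$, together with its rotations $X_v=\sigma^vX_0$, where $\sigma\colon i\mapsto i+1$. Each $X_v$ fractionally dominates $G$ (this reduces to $a+2b+c\ge1$ and $a+b+c+d\ge1$, both tight for these values), and I would verify mutual reconfigurability using the flow characterisation of reconfiguration from Section~\ref{sec:linprog}. By vertex-transitivity it suffices to check $X_0\to X_k$ for $k=1,\dots,7$, and the only non-routine case is the ``far rung'' $k=2$, where the binding flow inequality $X_2(\{2,6\})\le X_0(N[\{2,6\}])$ reads $1+d\le 2a+2b+2c$ and holds with equality. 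This shows $G$ can be eternally fractionally dominated by these eight functions, so $\fed(G)\le\tfrac83$.

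For the lower bound I would first symmetrise: averaging a strategy over $\mathrm{Aut}(G)\cong D_8$ preserves total weight, domination, and reconfigurability (the latter being cut out by the linear inequalities $w_2(T)\le w_1(N[T])$, hence convex), so it suffices to bound a strategy whose response $X_0$ to an attack at $0$ is invariant under the stabiliser of $0$ and whose response to an attack at $v$ is $\sigma^vX_0$; thus $X_0=(1,a,b,c,d,c,b,a)$ with $S=1+2a+2b+2c+d$. Two inequalities then force $S\ge\tfrac83$: domination of $N[3]$ gives $a+b+c+d\ge1$, and answering an attack at the far-rung vertex $2$ (whose response is $\sigma^2X_0$, leaving weight $d$ on $6$) requires, via the flow condition on $T=\{2,6\}$, the reconfiguration inequality $2a+2b+2c-d\ge1$. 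Taking $\tfrac13$ of the second plus $\tfrac43$ of the first yields $2(a+b+c)+d\ge\tfrac53$, i.e. $S\ge\tfrac83$.

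I expect the lower bound to be the crux. The main obstacle is that no purely static argument can succeed: the packing bound collapses in diameter $2$ and $F(G)=\tfrac73<\tfrac83$, so the proof must squeeze the extra $\tfrac13$ out of a reconfiguration constraint, and the heavy overlap of closed neighbourhoods makes crude cut bounds vacuous (for instance $N[\{0,2,4\}]=V$ gives only $S\ge\tfrac73$). The two delicate points are (i) the reduction to an automorphism-invariant strategy, which is what turns the problem into a finite symmetric linear program, and (ii) isolating the single rung $\{2,6\}$, whose \emph{both} endpoints lie at distance $2$ from the attacked vertex $0$, as the source of the binding flow inequality.
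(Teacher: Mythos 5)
Your upper bound is sound: the eight rotations of $X_0=(1,\tfrac13,\tfrac13,0,\tfrac13,0,\tfrac13,\tfrac13)$ are indeed fractional dominating functions of total weight $\tfrac83$, and the required flows $X_0\to X_k$ all exist (I verified them; the binding cut is, as you say, $T=\{2,6\}$ for $k=2$), so this gives $\fed(\Cay(\Z_8,\{\pm1,4\}))\le\tfrac83$ by the same mechanism as the paper's construction, just with a different weighting. The lower bound, however, has a genuine gap, and it sits in the sentence ``whose response to an attack at $v$ is $\sigma^vX_0$.'' Averaging a winning strategy over $\mathrm{Aut}(G)$ does yield an \emph{equivariant} strategy (the response to a transformed attack history is the transform of the response; this uses exactly the convexity facts you cite), but it does not yield a \emph{positional} one, i.e.\ one whose response depends only on the most recently attacked vertex. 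Your inequality $2a+2b+2c-d\ge 1$ is obtained by assuming that the response to the second attack, at vertex $2$ after vertex $0$, is literally $\sigma^2X_0$ — that is positionality, not equivariance. Equivariance gives nothing here, since the stabiliser in $\mathrm{Aut}(G)$ (order $16$) of the ordered pair $(0,2)$ is trivial; the actual response $Y$ to the history $(0,2)$ may depend on the history, and nothing forces $Y(6)=d$. Worse, the reduction ``optimal strategies may be taken positional'' is essentially the paper's open Problem \ref{ques:finite}, so it cannot be invoked as a routine symmetrisation; a lower bound must defeat all strategies, including history-dependent ones.

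The gap is repairable, and the repair is precisely what the paper's proof does: instead of pinning $Y(6)$ by symmetry, bound it by the domination constraints that $Y$ must itself satisfy. Since $N[5]\cup N[7]=V\setminus\{2\}$ and $N[5]\cap N[7]=\{6\}$, summing $Y(N[5])\ge 1$ and $Y(N[7])\ge 1$ gives $Y(6)\ge 3-S$ whenever $Y(2)\ge 1$ and $Y$ has total weight $S$; the same argument applied to $X_0$ gives $X_0(4)\ge 3-S$. Your cut inequality for $T=\{2,6\}$ then closes the argument with no symmetry assumption at all:
$4-S\le Y(2)+Y(6)\le X_0\bigl(N[\{2,6\}]\bigr)=S-X_0(0)-X_0(4)\le S-1-(3-S)=2S-4$,
hence $S\ge\tfrac83$. (Two incidental remarks: your symmetrisation of the \emph{first} response is fine and could be kept as a cosmetic simplification, but it is then doing no real work; and your side claim $F(G)=\tfrac73$ is wrong — the weighting you describe leaves $N[3]$ and $N[5]$ with weight only $\tfrac23$, and in fact $F(G)=\tfrac52$ — though this is harmless, since $\tfrac52<\tfrac83$ still supports your point that no static bound can reach $\tfrac83$.)
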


\begin{proof}
We first prove that $\fed(Cay(\mathbb{Z}_{8}, \{\pm 1, 4\}) \geq \frac{8}{3}$. Let $\epsilon \geq 0$ be such that total weight of $\frac{8}{3}-\epsilon$ can dominate the graph. We may assume without loss of generality that vertex $v_0$ receives a weight of $1$. Since the sum of the weights in the neighbourhood of the vertices $v_3$ and $v_5$ must sum to at least $1$, we have: $w(v_2)+w(v_3)+w(v_4)+w(v_7) \geq 1$ and $w(v_4)+w(v_5)+w(v_6)+w(v_1) \geq 1$. Since there is a weight of $1$ on the vertex $v_0$, then 
$\sum\limits_{i=1}^7 w(v_i) \leq \frac{5}{3}-\epsilon$. 
So, $w(v_4)+ \sum\limits_{i=1}^7 w(v_i) \geq 2 \implies w(v_4) \geq \frac{1}{3}+\epsilon$. If there is an attack on vertex $v_2$, we must move a weight of $1$ to that vertex and a weight of at least $\frac{1}{3}+\epsilon$ to vertex $v_6$. So, the weight in the neighbourhood of vertex $v_2$ and $v_6$ must sum to at least $1+\frac{1}{3}+\epsilon$. Hence, $\frac{4}{3}+\epsilon \leq w(v_1)+w(v_2)+w(v_3)+w(v_5)+w(v_6)+w(v_7) \leq \frac{4}{3}-2\epsilon \implies 3\epsilon \leq 0$. Now, to prove that $\fed(Cay(\mathbb{Z}_{8}, \{\pm 1, 4\}) \leq \frac{8}{3}$, we place a weight of $1$ on vertex $v_0$, a weight of $\frac{1}{3}$ on vertex $v_4$ and a weight of $\frac{2}{3}$ on the vertices $v_2$ and $v_6$. The reader can check from Figure \ref{Figure:CayleyZ8} and Figure \ref{Figure:CayleyZ8-2} (see Appendix) that any attack on a vertex $v_i$ can be defended in a way such that vertex $v_i$ receives a weight of $1$, vertex $v_{i+4}$ receives a weight of $\frac{1}{3}$ and each of the vertices $v_{i+2}$, $v_{i+6}$ receive a weight of $\frac{2}{3}$.
\end{proof}




\begin{thm}
If $n \equiv 4 \pmod {12}$, then $\fed(Cay(\mathbb{Z}_{2n}, \{\pm 1, n\})) \geq \frac{(n+2)(n+4)}{2(n+5)}$.
\end{thm}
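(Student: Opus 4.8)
The plan is to follow the template of the cyclic-prism bound for $n \equiv 10 \pmod{12}$, adapting its two packing/covering arguments to the M\"obius twist. Write $G = \Cay(\Z_{2n},\{\pm 1,n\})$ and set $v_i = i$, $u_i = i+n$ for $0 \le i \le n-1$, so that each ``column'' $\{v_i,u_i\}$ is a pair of antipodal vertices joined by the chord $i \sim i+n$. The key structural observation is that $N[\{v_i,u_i\}] = \{v_{i-1},v_i,v_{i+1},u_{i-1},u_i,u_{i+1}\}$ with indices read modulo $n$ (the twist is absorbed at the boundary), so contracting each column yields the cycle $C_n$; this is exactly the feature shared with $C_n \Box K_2$ that makes the same arithmetic go through. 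Since $G$ is cubic, $3$-connected, and has $2n$ vertices, Corollary \ref{regconn} gives $\fed(G) \le \frac{2n+3}{4}$. I would set $\epsilon = \frac{2n+3}{4} - \fed(G) \ge 0$, fix an optimal initial weight function $w$ of total weight $W = \frac{2n+3}{4}-\epsilon$, and assume (by vertex-transitivity of the Cayley graph) that the first attack is at $v_0$, so $w(v_0) \ge 1$. The goal is to prove $\epsilon \le \frac{n-1}{4(n+5)}$, which rearranges to the claimed bound since $\frac{2n+3}{4} - \frac{n-1}{4(n+5)} = \frac{(n+2)(n+4)}{2(n+5)}$.

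The first half of the argument establishes that the antipode of any heavy vertex is itself fairly heavy: for any feasible $w$ of total weight $W$ and any vertex $z$ with $w(z) \ge 1$, I claim $w(z+n) \ge \frac14 + \epsilon$. To prove this for $z = v_0$, I would exhibit a set $S$ of $\frac n2$ vertices whose closed neighbourhoods cover $V(G) \setminus \{v_0\}$, with the single vertex $u_0 = n$ covered exactly twice and $v_0$ not at all. Concretely, with $n = 4m$, the set
\[
S = \{\,4j+3 : 0 \le j \le m-1\,\} \cup \{\,4j+n+1 : 0 \le j \le m-1\,\}
\]
works: the runs $\{s-1,s,s+1\}$ from the first block tile the lower arc $[2,n]$ and those from the second block tile the upper arc $[n,2n-2]$, while the chord-endpoints $s+n$ fill in exactly the gaps that the runs miss, the two central runs overlapping only at $n=u_0$. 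Summing the domination constraints over $S$ gives $\sum_{s\in S} w(N[s]) = W - w(v_0) + w(u_0) \ge |S| = \frac n2$, whence $w(u_0) \ge \frac n2 - W + w(v_0) \ge \frac14 + \epsilon$; the general statement follows by translating $S$ by $z$. This construction is the crux of the proof and the main obstacle: it is precisely here that $n \equiv 0 \pmod 4$ is needed, and where the twist forces a residue different from the $n \equiv 2 \pmod 4$ that works for $C_n \Box K_2$ (hence $4$ rather than $10$ modulo $12$). The delicate point is verifying the covering multiplicities, which I would do by tracking residues modulo $4$ along each arc.

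With the antipode bound in hand, the second half propagates it to a family of disjoint regions. Consider an attack on $v_j$ with $j = 3i-1$. The defender's response yields a feasible weight function $w'$ of total weight $W$ with $w'(v_j) \ge 1$, so the antipode bound gives $w'(u_j) \ge \frac14 + \epsilon$. Since weight moves only along edges and the total out-flow from any vertex is bounded by its weight, the weight ending on the column $\{v_j,u_j\}$ cannot exceed what $w$ placed on $N[\{v_j,u_j\}]$; hence
\[
w\bigl(N[\{v_j,u_j\}]\bigr) \ge w'(v_j) + w'(u_j) \ge \frac54 + \epsilon .
\]
I would take the centres $j = 3i-1$ for $i = 1,\dots,\frac{n-1}{3}$, which exist exactly because $n \equiv 1 \pmod 3$; in the contracted cycle $C_n$ these are the midpoints of the triples $\{1,2,3\},\{4,5,6\},\dots,\{n-3,n-2,n-1\}$, so their six-vertex neighbourhoods are pairwise disjoint, disjoint from column $0$, and together with column $0$ partition $V(G)$.

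Finally I would sum over this partition. Using $w(v_0) \ge 1$, $w(u_0) \ge \frac14 + \epsilon$, and the $\frac{n-1}{3}$ inequalities above,
\[
W \ge \Bigl(1 + \frac14 + \epsilon\Bigr) + \frac{n-1}{3}\Bigl(\frac54 + \epsilon\Bigr),
\]
and substituting $W = \frac{2n+3}{4} - \epsilon$ and simplifying gives $\frac{n-1}{12} \ge \epsilon\cdot\frac{n+5}{3}$, i.e. $\epsilon \le \frac{n-1}{4(n+5)}$, which is the required estimate. The two modular hypotheses combine through the Chinese Remainder Theorem to exactly $n \equiv 4 \pmod{12}$, so no further case analysis is needed; the only genuinely technical verification is the covering multiplicity of the set $S$.
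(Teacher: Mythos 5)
Your proposal is correct and takes essentially the same route as the paper's own proof: your covering set $S = \{4j+3\} \cup \{4j+n+1\}$ is exactly the paper's $\{v_{4i-1} : i \in [\frac{n}{4}]\} \cup \{v_{n+4i-3} : i \in [\frac{n}{4}]\}$ re-indexed, and the antipode bound $w(u_0) \geq \frac{1}{4}+\epsilon$, the use of responses to attacks at columns $3i-1$, the partition of the columns other than column $0$ into $\frac{n-1}{3}$ consecutive triples, and the closing arithmetic $\epsilon \leq \frac{n-1}{4(n+5)}$ all coincide with the paper's argument. The only differences are presentational: you make the translation-invariance of the covering argument explicit via vertex-transitivity (the paper leaves it implicit), and your counting identity carries the correct sign $+\,w(u_0)$ where the paper's displayed identity has a harmless sign typo.
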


\begin{proof}
It is known from Corollary \ref{regconn} that $\frac{2n+3}{4}$ is an upper bound on the fractional eternal domination number of $Cay(\mathbb{Z}_{2n}, \{\pm 1, n\})$. Let $\epsilon \geq 0$ be a real number such that a total weight of $\frac{2n+3}{4}-\epsilon$ can fractionally eternally dominate $Cay(\mathbb{Z}_{2n}, \{\pm 1, n\})$. Let us consider an initial feasible weight function $w$. We may assume without loss of generality that vertex $v_0$ receives a weight of $1$. Let $S=\{v_{4i-1}: i \in [\frac{n}{4}]\} \cup \{v_{n+4i-3}: i \in [\frac{n}{4}]\}$. It follows from the preceding definition and from the fact that $n \equiv 0 \pmod 4$ that $N[v_i] \cap N[v_j] = \emptyset$ for any $i, j \in S, i \neq j$ unless $i=n-1$ and $j=n+1$. Now, $\sum_{v \in S} w(N[v])=(\sum_{v \in V} w(v))-w(v_0)-w(v_n)$. Since $\sum_{v \in S} w(N[v]) \geq \frac{n}{2}$, we have $(\sum_{v \in V} w(v))-w(v_0)-w(v_n) \geq \frac{n}{2} \implies w(v_n) \geq \frac{1}{4}+\epsilon$. This means that for any integer $i \in [\frac{n-1}{3}]$, the sum of the weight in the closed neighbourhood of the set $\{v_{3i-1}, v_{n+3i-1}\}$ is at least $\frac{5}{4}+\epsilon$. Since $n \equiv 1 \pmod 3$, $V-\{v_0, v_n\}$ can be partitioned into $\frac{n-1}{3}$ sets each of which contains the closed neighbourhood of $\{v_{3i-1}, v_{n+3i-1}\}$ for some $i \in [\frac{n-1}{3}]$. Hence, $(\frac{n-1}{3})(1+\frac{1}{4}+\epsilon) \leq \frac{2n+3}{4}-\epsilon-1-\frac{1}{4}-\epsilon \implies \epsilon \leq \frac{n-1}{4(n+5)}$. As a result, $\fed(Cay(\mathbb{Z}_{2n}, \{\pm 1, n\})) \geq (\frac{n+2}{3})(1+\frac{1}{4}+\frac{(n-1)}{4(n+5)})=\frac{(n+2)(n+4)}{2(n+5)}$. 
\end{proof}


\section{Graph products}\label{sec:products}

\subsection{Hypercubes}

In this final section, we consider $\fed(G)$ when $G$ is obtained by taking the Cartesian or strong product of two graphs. Perhaps the most relevant in the field of graph domination, due to its applications in coding theory, is the hypercube $Q_d$. Hypercubes are generally resistant to exact computation of domination parameters; the exact value of $\gamma(Q_d)$ has been determined for $d \leq 9$ and for $d=2^r-1$ for some positive integer $r$ but is generally open. It is known that $\gamma(Q_d) = \med(Q_d)$ since any attack on $Q_d$ can be defended by a guard shift, and so determining the number of guards needed in the all-guards move model is also generally open.  However, as a consequence of Corollary \ref{regconn}, we see that $\fed(Q_d)$ can at least be closely approximated.

\begin{thm}\label{thm:cube}
For any positive integer $d$, $\frac{2^d}{d+1} \leq \fed(Q_d) \leq \frac{2^d + d}{d+1}$.
\end{thm}

Figure \ref{fig:cube} compares the bounds from Theorem \ref{thm:cube} with the known values of $\gamma(Q_d)$ for small values of $d$ (see \cite{AHK17}). The equality in parameters for $d=1,3,7$ is not a coincidence. If $d = 2^r-1$ for some positive integer $r$, then $\gamma(Q_d) = \med(Q_d) = \frac{2^d}{d+1}$ because $Q_d$ has an efficient dominating set \cite{hill1986first}. Since this is a lower bound on $\fed(Q_d)$ (Theorem \ref{thm:cube}) as well as an upper bound (Proposition \ref{prop:basicbound}), equality holds.

\begin{figure}
    \begin{center}
\begin{tabular}{|c|c|c|}
\hline
$d$ & $\gamma_{\rm m}^{\infty}(Q_d) = \gamma(Q_d)$ & $\fed(Q_d)$ \\
\hline
1 & 1 & $1$ \\
2 & 2 & $[\frac{4}{3},2]$ \\
3 & 2 & $2$ \\
4 & 4 & $[\frac{16}{5},4]$  \\
5 & 7 & $[\frac{16}{3},\frac{37}{6}]$ \\
6 & 12 & $[\frac{64}{7},10]$ \\
7 & 16 & $16$ \\
8 & 32 & $[\frac{256}{9},\frac{88}{3}]$ \\
9 & 62 & $[\frac{256}{5},\frac{521}{10}]$  \\
10 & $[107,120]$ & $[\frac{1024}{11},94]$ \\
\hline
\end{tabular}
\end{center}
    \caption{Comparison of $\fed(Q_d)$ and $\gamma(Q_d)$}
    \label{fig:cube}
\end{figure}

\subsection{Grids}

\begin{thm}
For any integer $n \geq 1$, $\fed(P_n \Box P_2) = \lceil\frac{2n}{3}\rceil$.
\end{thm}

\begin{proof}
Since $C_{2n}$ is a spanning subgraph of $P_n \Box P_2$ and satisfies $\fed(C_{2n})=\lceil \frac{2n}{3} \rceil$, we have $\fed(P_n \Box P_2) \leq \lceil\frac{2n}{3}\rceil$. It remains to prove that $\fed(P_n \Box P_2) \geq \lceil\frac{2n}{3}\rceil$. To this end, we label the vertices of the graph $v_1, v_2, \ldots, v_n, v_1', v_2', \ldots, v_n'$ in a way such that $v_i v_i' \in E$ for all $i$ and $v_i v_j, v_i' v_j' \in E$ for all $i, j$ such that $|i-j|=1$. Now, consider the sequence of attacks on the vertices
$v_1, v_2', v_4, v_5', v_7, v_8' \ldots$ in this particular order. More formally, for any $k \geq 0$, at time $t=2k+1$, the attacked vertex is $v_{3k+1}$ and at time $t=2k+2$, the attacked vertex is $v_{3k+2}'$. Observe that, for any $t_1 \geq 1$ and $t_2 > t_1$, the attacked vertex $u_2$ at time $t_2$ is at distance at least $t_2-t_1+1$ from the attacked vertex $u_1$ at time $t_1$. Since the weight on $u_1$ can be distributed only to the vertices at distance at most $t_2-t_1$ from $u_1$ during the $t_2$-th attack, it follows that new set of weights (not coming from $u_1$) must be moved to $u_2$. Consequently, for any $k \geq 0$, the sum of the weights on the vertices of the set $\{v_i: i \leq 3k+1\} \cup \{v_i': i \leq 3k+1\}$ is at least $t=2k+1$ after a response to the $t$-th attack if $t=2k+1$ and the sum of the weights on the vertices of the set $\{v_i: i \leq 3k+2\} \cup \{v_i': i \leq 3k+2\}$ is at least $t=2k+2$ after a response to the $t$-th attack if $t=2k+2$. Thus, the inequality follows.
\end{proof}

\begin{thm}
$\fed(P_m \Box P_n) \leq \dfrac{mn}{5}+\dfrac{2(m+n)}{15}+\dfrac{39}{15}$ for any $m, n \geq 2$.
\end{thm}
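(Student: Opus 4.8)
The plan is to avoid the factor-two loss of Proposition~\ref{prop:fracupperbound} by exploiting the fact that the infinite grid admits an efficient dominating pattern that can be \emph{shifted} one step in response to any attack. I would coordinatize $V(P_m \Box P_n)$ as $\{(x,y) : 1 \le x \le m,\ 1 \le y \le n\}$ and, for each $k \in \{0,1,2,3,4\}$, set $S_k = \{(x,y) : x + 2y \equiv k \pmod 5\}$. A direct check shows that, on the infinite grid, the closed neighbourhood of any vertex of $S_k$ meets each of the five residue classes exactly once; hence each $S_k$ is an efficient dominating set and the five classes partition the vertex set.

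First I would establish the engine of the strategy on the boundary-free interior. Maintaining weight $1$ on the vertices of one class $S_k$ fractionally dominates the grid at cost $\approx mn/5$. The key observation is that translating the whole configuration by $(1,0)$, $(-1,0)$, $(0,1)$, or $(0,-1)$ carries $S_k$ onto $S_{k+1}$, $S_{k+4}$, $S_{k+2}$, $S_{k+3}$, respectively; since $\{1,2,3,4\}$ exhausts the nonzero residues mod $5$, from any class one can reach \emph{every} other class by a single translation, and each such translation is a legal one-hop reconfiguration (every guard sends its full weight to one neighbour). Thus, when a vertex $v \in S_{k'}$ is attacked, a single shift of the entire pattern places weight $1$ on $v$ while keeping the configuration an efficient dominating set; the result again lies in the family $\{S_0,\dots,S_4\}$, so the strategy sustains an arbitrary sequence of attacks.

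Next I would transfer this to the finite grid, which is where the real work lies. Two things break at the boundary: the residue classes no longer dominate the edge and corner vertices (whose closed neighbourhoods are truncated), and a global shift would push guards near one side off the grid. The remedy is to augment each member of the family with a fixed fractional ``frame'' of weight supported near the four sides, chosen so that (i) the frame together with the interior class fractionally dominates every boundary vertex under each of the four shift directions, and (ii) weight that a shift would carry off the grid is instead absorbed into, and re-emitted from, the frame so that domination is restored in one hop. Because we are permitted fractional weights, this boundary repair is cheaper than inserting extra integral dominators, which is precisely why the resulting bound falls \emph{below} $\gamma(P_m \Box P_n)$.

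Finally I would bound the total weight: the interior class contributes $\frac{mn}{5}$, and summing the frame weight over the perimeter yields the lower-order terms, giving $\fed(P_m \Box P_n) \le \frac{mn}{5} + \frac{2(m+n)}{15} + \frac{39}{15}$, with the constant term absorbing the corner corrections and the small cases where $m$ or $n$ is close to $2$. The main obstacle throughout is this boundary bookkeeping: one must exhibit a single frame that simultaneously supports all four shift directions, verify one-hop feasibility for attacks on edge and corner vertices, and then account for its weight tightly enough to reach the stated coefficients $\frac{2}{15}$ and $\frac{39}{15}$. The interior argument, by contrast, is essentially immediate from the translation structure of the five residue classes.
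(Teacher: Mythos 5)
Your interior engine is sound --- each class $S_k$ is indeed an efficient dominating set of the infinite grid, and the four unit translations carry $S_k$ onto the four other classes --- but the boundary is not ``lower-order bookkeeping''; it is where the approach fails outright. Suppose the current configuration is $S_k$ (restricted to the grid) plus your frame, and an attack forces a right shift to $S_{k+1}$. Consider a new pattern vertex $(1,y) \in S_{k+1}$ in the first column. By the very efficiency property you invoke, the unique member of $S_k$ in its closed neighbourhood is $(0,y)$, which is off the grid; so \emph{none} of the old pattern's weight is adjacent to $(1,y)$, and the full unit of weight that $(1,y)$ must carry after the move has to come from frame weight inside $N[(1,y)]$. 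The roughly $n/5$ such vertices lie $5$ apart in column $1$, so their closed neighbourhoods are pairwise disjoint, forcing the frame to hold at least $n/5 = 3n/15$ near the left side; repeating the argument for the other three shift directions forces total frame weight at least $2(m+n)/5 - O(1) = 6(m+n)/15 - O(1)$. This already exceeds your entire budget of $2(m+n)/15 + 39/15$ once $m+n$ is moderately large, so no frame of the claimed size can support even a single move in each of the four directions.

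The defect is worse than a constant-factor loss: the translation strategy cannot be repaired by any reserve of size $o(mn)$. An attacker who always attacks a vertex whose class index is one more (mod $5$) than that of the current configuration forces a right shift every round; each such round demands a fresh $\approx n/5$ of weight at the left column, while the $\approx n/5$ stranded at the right column (the guards of $S_k$ in column $m$ have no vertex to shift onto, and the nearest $S_{k+1}$-vertices in that column are at distance $2$ or $3$) can migrate back only one column per round. Over the first $\approx m$ rounds the left side must therefore supply $\approx mn/5$ with essentially no replenishment. This conservation-of-weight obstruction is precisely what the paper's proof is built to avoid: it fixes a \emph{static} background weighting ($1/5$ on interior vertices, $4/15$ on non-corner boundary vertices, $7/15$ on corners), which is itself a fractional dominating function and is never altered, and it moves only three constant-size excesses ($12/15$ for interior attacks, $12/15$ for non-corner boundary attacks, $8/15$ for corner attacks) to the attacked vertex along internally disjoint paths by relaying, in the style of Theorem \ref{conn}. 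Each response transports only $O(1)$ net weight and restores the background exactly, so no per-round injection proportional to $n$ or $m$ is ever needed.
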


\begin{proof}
We begin by placing a weight of $\frac{1}{5}$ on each vertex in the inner $P_{m-2} \Box P_{n-2}$, a weight of $\frac{7}{15}$ on each of the corner vertices and a weight of $\frac{4}{15}$ on each of the non-corner boundary vertices.  We place an additional weight of $\frac{12}{15}$ on an arbitrary vertex in the inner $P_{m-2} \Box P_{n-2}$, an additional weight of $\frac{8}{15}$ on one of the corner vertices and an additional weight of $\frac{12}{15}$ on a random vertex in the boundary (not the corner). If a vertex in the inner $P_{m-2} \Box P_{n-2}$ is attacked, the additional weight of $\frac{12}{15}$ can be distributed along $4$ disjoint paths to that vertex. If a vertex in the corner is attacked, the additional weight of $\frac{8}{15}$ can be distributed along $2$ disjoint paths to that vertex. If a vertex on the boundary is attacked, the additional weight of $\frac{11}{15}$ can be distributed along $3$ disjoint paths to that vertex. This strategy shows that a total weight of $\frac{(m-2)(n-2)}{5}+\frac{12}{15}+\frac{8(m-2)+8(n-2)}{15}+\frac{11}{15}+\frac{28}{15}+\frac{8}{15}=\frac{mn}{5}+\frac{2(m+n)}{15}+\frac{39}{15}$ can defend the graph from any sequence of attacks.
\end{proof}

\begin{thm}
$\fed(P_n \boxtimes P_m) \leq \dfrac{mn}{9}+\dfrac{16(m+n)}{9}+\dfrac{114}{9}$ for any $m, n \geq 0$.
\end{thm}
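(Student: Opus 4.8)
The plan is to exhibit an explicit fractional weighting together with a response rule, exactly in the spirit of the preceding theorem on $\fed(P_m \Box P_n)$. Since $P_n \boxtimes P_m$ is the king graph on an $n \times m$ grid, every interior vertex has a closed neighbourhood of exactly $9$ vertices (a $3 \times 3$ block), every non-corner boundary vertex has closed neighbourhood of size $6$, and every corner has closed neighbourhood of size $4$. Placing a uniform base weight of $\frac{1}{9}$ on every vertex therefore fractionally dominates every interior vertex and accounts for the leading term $\frac{mn}{9}$, but it under-dominates the boundary. I would repair this by reinforcing the entire boundary ring with additional weight, which simultaneously restores fractional domination of the lower-degree boundary and corner vertices and pre-stages mobile weight there; this reinforcement is what produces the perimeter term $\frac{16(m+n)}{9}$, while a fixed number of extra \emph{reserve} weights (one per region type: interior, edge, corner) accounts for the constant $\frac{114}{9}$.

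Next I would specify the response rule. For each region type the corresponding reserve is chosen so that, when a vertex $v$ of that type is attacked, the reserve can be routed to $v$ to raise its weight to $1$. Following the connectivity argument of Theorem \ref{conn}, the reserve is split evenly and pushed to $v$ along internally vertex-disjoint paths, each intermediate vertex forwarding exactly what it receives, so that its weight --- and hence the total weight of every closed neighbourhood it lies in --- is unchanged. The number of disjoint paths I would use matches the local degree of the attacked region: $8$ for an interior attack, $5$ for a boundary attack, and $3$ for a corner attack, mirroring the $4/3/2$ split used for the Cartesian grid. After the response the attacked vertex carries weight $1$, every other vertex has returned to its base value, and the reserve has merely relocated, so the configuration again has the prescribed form and the same total weight; this is exactly what makes the strategy eternal rather than one-shot, and it shows the total weight used is an upper bound on $\fed$.

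I expect the corners and boundary to be the main obstacle. Because a corner has degree only $3$ and the boundary connectivity is low, weight cannot be cheaply transported from the interior to a corner under attack; this is precisely why the construction must carry heavy weight along the whole perimeter, and why the perimeter coefficient comes out as large as $\frac{16}{9}$ rather than the value a bare domination argument would suggest. The delicate part of the write-up will be checking that, after an attack on a boundary or corner vertex, the local rebalancing of the reinforced boundary weight still leaves every closed neighbourhood --- including the heavily overlapping neighbourhoods clustered around a corner --- with total weight at least $1$, and that the resulting configuration is genuinely of the same type so the argument iterates. Finally I would dispose of the degenerate small cases (when $m$ or $n$ is at most $2$, where the interior/edge/corner trichotomy collapses) separately, either by direct weightings or by appealing to the bounds already in hand, such as Proposition \ref{prop:basicbound} and Theorem \ref{conn}.
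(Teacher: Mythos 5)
Your overall architecture --- a uniform base weight of $\frac{1}{9}$, a reinforced boundary region, and mobile reserves pushed to the attacked vertex along internally disjoint paths whose intermediate vertices forward exactly what they receive --- is the same as the paper's, but your reinforced region is too thin, and this breaks the key routing step. The paper does not reinforce only the outer ring: it puts weight $\frac{1}{9}$ only on the inner subgrid $P_{n-8} \boxtimes P_{m-8}$ (margin $4$ from the boundary), gives weight $\frac{3}{9}$ to \emph{every} vertex of the surrounding band of width $4$, and keeps two reserves, $\frac{8}{9}$ for the inner subgrid and $\frac{6}{9}$ for the band. That margin of $4$ is precisely what validates the observation that any two inner-subgrid vertices are joined by $8$ internally disjoint paths. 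Your rule needs this property for every vertex off the outer ring, and it fails there: in the king graph the five vertices $(1,3),(2,3),(3,3),(3,2),(3,1)$ form a cut separating $(2,2)$ from the rest of the graph, so by Menger's theorem at most $5$ internally disjoint paths join a distant reserve to $(2,2)$; similarly the seven vertices $(1,j\pm1),(2,j\pm1),(3,j-1),(3,j),(3,j+1)$ isolate $\{(1,j),(2,j)\}$, so a second-row vertex admits at most $7$ such paths. Hence ``split the reserve of $\frac{8}{9}$ evenly along $8$ disjoint paths'' is infeasible whenever the attacked vertex is near the boundary but still, in your trichotomy, interior.

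The capacity constraint makes this fatal rather than cosmetic: since a vertex can send at most its current weight ($\sum_{y \in N(x)} m_{xy,i} \leq w_i(x)$), a path whose intermediate vertices carry $\frac{1}{9}$ can forward only $\frac{1}{9}$ in a round, so the at most $5$ (resp.\ $7$) available paths deliver at most $\frac{5}{9}$ (resp.\ $\frac{7}{9}$) through weight-$\frac{1}{9}$ vertices, and the attacked vertex need not reach weight $1$ unless you make the ring weights large and the split unequal --- none of which your rule specifies or verifies. The paper's width-$4$ band of weight $\frac{3}{9}$ sidesteps exactly this: an attack anywhere in the band requires only $\frac{6}{9}$ of extra weight, which fits through two or three disjoint paths of capacity $\frac{3}{9}$ each, while attacks in the true interior (margin at least $4$) genuinely admit $8$ disjoint paths. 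Two smaller points: carrying three reserves instead of the paper's two is harmless, but your accounting of the constant is wrong --- the $\frac{114}{9}$ is not reserve weight (the reserves total less than $2$); it arises from the area bookkeeping of the band, and indeed the construction's exact total is $\frac{mn}{9}+\frac{16(m+n)}{9}-\frac{114}{9}$, so the stated bound holds with room to spare.
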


\begin{proof}
Observe that there exists $8$ disjoint paths joining any pair of vertices in the inner $P_{n-8} \boxtimes P_{m-8}$ subgrid. So, we start by placing a weight of $\frac{1}{9}$ on each of the vertices of that subgrid, then a weight of $\frac{3}{9}$ on the remaining vertices of the graph. Now we can place an additional weight of $\frac{8}{9}$ on a random vertex in the subgrid and an additional weight of $\frac{6}{9}$ on a random vertex not in the subgrid. If a vertex in the subgrid is attacked, then the additional weight of $\frac{8}{9}$ can be distributed along eight disjoint paths to that vertex, otherwise, the additional weight of $\frac{6}{9}$ can be distributed along disjoint paths leading to that vertex. This strategy shows that a total weight of $\frac{(m-8)(n-8)}{9}+\frac{8}{9}+\frac{24m+24n-192}{9}+\frac{6}{9}=\frac{mn}{9}+\frac{16(m+n)}{9}+\frac{114}{9}$ can defend the graph from any sequence of attacks.
\end{proof}


\section{Conclusion}\label{sec:conclusion}

We conclude with some open problems for future research.  In Section \ref{sec:behaviour} we noted that, for any rational number $r \geq 2$, there is a graph whose fractional eternal domination number is exactly $r$.  However, it is not clear whether or not $\fed(G)$ is necessarily rational for every finite graph $G$.

\begin{prob}\label{ques:rational}
Is $\fed(G)$ rational for every graph $G$?
\end{prob}

In Section \ref{sec:linprog}, we showed that, if a graph $G$ can be eternally fractionally dominated by $n$ {f.d}-functions, then $\fed(G)$ and an optimal guarding strategy can be computed efficiently and $\fed(G)$ is necessarily rational.  This prompts to ask the following:

\begin{prob}\label{ques:finite}
Can every finite graph $G$ be eternally fractionally dominated by $n$ {f.d.}-functions?
\end{prob}

Clearly, a positive answer to Problem \ref{ques:finite} implies a positive answer to Problem \ref{ques:rational}.

In light of the upper bound from Theorem \ref{conn} on $\fed(G)$ in terms of the connectivity and order of $G$, and of the bound given in Lemma \ref{degreebounds} on $\gamma_f(G)$, we offer the following problem:

\begin{prob}\label{ques:conn}
Does there exist some function $f$ such that every graph with connectivity $\kappa$ satisfies $\fed(G) \leq \gamma_f(G) + f(\kappa)$?
\end{prob}

Following our study of various graph classes, we are left with a number of unanswered questions.

\begin{prob}
Determine the exact value of $\fed(G)$ for all Kneser graphs.
\end{prob}

Recall that Theorem \ref{thm:petersen} determined the exact value for $n=5$ and $k=2$, and Theorem \ref{thm:kneser} gives bounds for $KG_{n,2}$.

In Section \ref{sec:Cayley}, it was proved that, if $G$ is a cubic abelian Cayley graph, then $\fed(G) < \gamma(G)$ if and only if $G$ is isomorphic to either $C_{4k+2} \Box K_2$ or $\Cay(\Z_{8k}, \{\pm 1, 4k\})$ for some positive integer $k$. For all other cubic abelian Cayley graphs, and for $C_{10} \Box K_2$, the exact value of $\fed(G)$ was computed exactly.

\begin{prob}
Let $k$ be a positive integer.  Determine the exact value of $\fed(G)$ when $G$ is isomorphic to
    \begin{compactitem}
    \item $C_{4k+2} \Box K_2 \textup{ for $k \neq 2$}$;
    \item $\Cay(\Z_{8k}, \{\pm 1, 4k\})$.
    \end{compactitem}
\end{prob}

Finally, turning to graph products, the exact value of $\fed(G)$ remains unsolved for hypercubes, Cartesian grids (except for ``ladders''), and strong grids.  Domination parameters are notoriously difficult to compute in graph products, however the results of Section \ref{sec:products} provide some initial bounds from which to work.







\subsection*{Acknowledgements}

D.~Khatri and N.~Kumar completed portions of this work while research interns at the Universit\'e de Montr\'eal, and thank the MITACS Globalink Program for its support.  A.~Krim-Yee and A.~Xu completed portions of this work while research interns at Dawson College, and thank FRQNT for their financial support.  B.~Seamone was supported by FRQNT grant number 2018-CO-210238, V.~Virgile was supported by funding from NSERC, and G.~MacGillivray was supported by NSERC.

\newpage
\bibliographystyle{siam}
\bibliography{references}


\newpage
\section*{Appendix}

\begin{figure}[h!]
    \centering
    \begin{tikzpicture}[scale=.7]
    	\begin{pgfonlayer}{nodelayer}	
    		\node [style=vertex, label=above:\color{blue}{$1$}] (0) at (-5, 10) {$v_0$};
    		\node [style=vertex, label=left:\color{blue}{$0$}] (1) at (-8, 6) {$v_1$};
    		\node [style=vertex, label=below:\color{blue}{$\frac{2}{3}$}] (2) at (-5, 2) {$v_2$};
    		\node [style=vertex, label=right:\color{blue}{$0$}] (3) at (-2, 6) {$v_3$};
    		\node [style=vertex, label=below:\color{blue}{$\frac{1}{3}$}] (4) at (-5, 8) {$v_4$};
    		\node [style=vertex, label=right:\color{blue}{$0$}] (5) at (-6.5, 6) {$v_5$};
    		\node [style=vertex, label=above:\color{blue}{$\frac{2}{3}$}] (6) at (-5, 4) {$v_6$};
    		\node [style=vertex, label=left:\color{blue}{$0$}] (7) at (-3.5, 6) {$v_7$};

    		\node [style=vertex, label=above:\color{blue}{$\frac{1}{3}$}] (32) at (5, 10) {$v_0$};
    		\node [style=vertex, label=left:\color{blue}{$0$}] (33) at (2, 6) {$v_1$};
    		\node [style=vertex, label=below:\color{blue}{$\frac{2}{3}$}] (34) at (5, 2) {$v_2$};
    		\node [style=vertex, label=right:\color{blue}{$0$}] (35) at (8, 6) {$v_3$};
    		\node [style=vertex, label=below:\color{blue}{$1$}] (36) at (5, 8) {$v_4$};
    		\node [style=vertex, label=right:\color{blue}{$0$}] (37) at (3.5, 6) {$v_5$};
    		\node [style=vertex, label=above:\color{blue}{$\frac{2}{3}$}] (38) at (5, 4) {$v_6$};
    		\node [style=vertex, label=left:\color{blue}{$0$}] (39) at (6.5, 6) {$v_7$};
    		\node [draw,align=left] at (7,9) {\tiny $v_0 \xrightarrow[]{2/3} v_4$};

    		\node [style=vertex, label=above:\color{blue}{$0$}] (40) at (-5, -2) {$v_0$};
    		\node [style=vertex, label=left:\color{blue}{$1$}] (41) at (-8, -6) {$v_1$};
    		\node [style=vertex, label=below:\color{blue}{$0$}] (42) at (-5, -10) {$v_2$};
    		\node [style=vertex, label=right:\color{blue}{$\frac{2}{3}$}] (43) at (-2, -6) {$v_3$};
    		\node [style=vertex, label=below:\color{blue}{$0$}] (44) at (-5, -4) {$v_4$};
    		\node [style=vertex, label=right:\color{blue}{$\frac{1}{3}$}] (45) at (-6.5, -6) {$v_5$};
    		\node [style=vertex, label=above:\color{blue}{$0$}] (46) at (-5, -8) {$v_6$};
    		\node [style=vertex, label=left:\color{blue}{$\frac{2}{3}$}] (47) at (-3.5, -6) {$v_7$};
    		\node [draw,align=left] at (-3,-3) {\tiny $v_0 \xrightarrow[]{1} v_1$\\ \tiny $v_4 \xrightarrow[]{1/3} v_5$\\ \tiny $v_6 \xrightarrow[]{2/3} v_7$};

    		\node [style=vertex, label=above:\color{blue}{$0$}] (48) at (5, -2) {$v_0$};
    		\node [style=vertex, label=left:\color{blue}{$\frac{2}{3}$}] (49) at (2, -6) {$v_1$};
    		\node [style=vertex, label=below:\color{blue}{$0$}] (50) at (5, -10) {$v_2$};
    		\node [style=vertex, label=right:\color{blue}{$\frac{1}{3}$}] (51) at (8, -6) {$v_3$};
    		\node [style=vertex, label=below:\color{blue}{$0$}] (52) at (5, -4) {$v_4$};
    		\node [style=vertex, label=right:\color{blue}{$\frac{2}{3}$}] (53) at (3.5, -6) {$v_5$};
    		\node [style=vertex, label=above:\color{blue}{$0$}] (54) at (5, -8) {$v_6$};
    		\node [style=vertex, label=left:\color{blue}{$1$}] (55) at (6.5, -6) {$v_7$};
    		\node [draw,align=left] at (7,-3) {\tiny $v_0 \xrightarrow[]{1} v_7$\\ \tiny $v_2 \xrightarrow[]{2/3} v_1$\\ \tiny $v_6 \xrightarrow[]{2/3} v_5$};
    	\end{pgfonlayer}
    	\begin{pgfonlayer}{edgelayer}
    		\draw (0) to (1);
    		\draw (1) to (2);
    		\draw (2) to (3);
    		\draw (3) to (4);
    		\draw (4) to (5);
    		\draw (5) to (6);
    		\draw (6) to (7);
    		\draw (7) to (0);
    		\draw (0) to (4);
    		\draw (1) to (5);
    		\draw (2) to (6);
    		\draw (3) to (7);
    		\draw (32) to (33);
    		\draw (33) to (34);
    		\draw (34) to (35);
    		\draw (35) to (36);
    		\draw (36) to (37);
    		\draw (37) to (38);
    		\draw (38) to (39);
    		\draw (39) to (32);
    		\draw (32) to (36);
    		\draw (33) to (37);
    		\draw (34) to (38);
    		\draw (35) to (39);
    		\draw (40) to (41);
    		\draw (41) to (42);
    		\draw (42) to (43);
    		\draw (43) to (44);
    		\draw (44) to (45);
    		\draw (45) to (46);
    		\draw (46) to (47);
    		\draw (47) to (40);
    		\draw (40) to (44);
    		\draw (41) to (45);
    		\draw (42) to (46);
    		\draw (43) to (47);
    		\draw (48) to (49);
    		\draw (49) to (50);
    		\draw (50) to (51);
    		\draw (51) to (52);
    		\draw (52) to (53);
    		\draw (53) to (54);
    		\draw (54) to (55);
    		\draw (55) to (48);
    		\draw (48) to (52);
    		\draw (49) to (53);
    		\draw (50) to (54);
    		\draw (51) to (55);
    	\end{pgfonlayer}
    \end{tikzpicture}
    
    \caption{Configuration of the guards in $Cay(\mathbb{Z}_8, \{\pm 1, 4\})$ after an attack on the vertices $v_0, v_1, v_4, v_7$. The graph on the top left corresponds to the initial configuration of the guards.}
    \label{Figure:CayleyZ8}
\end{figure}


\begin{figure}[h]
    \centering
    \begin{tikzpicture}[scale=.8]
    	\begin{pgfonlayer}{nodelayer}	
    		\node [style=vertex, label=above:\color{blue}{$0$}] (0) at (-5, 10) {$v_0$};
    		\node [style=vertex, label=left:\color{blue}{$\frac{1}{3}$}] (1) at (-8, 6) {$v_1$};
    		\node [style=vertex, label=below:\color{blue}{$0$}] (2) at (-5, 2) {$v_2$};
    		\node [style=vertex, label=right:\color{blue}{$\frac{2}{3}$}] (3) at (-2, 6) {$v_3$};
    		\node [style=vertex, label=below:\color{blue}{$0$}] (4) at (-5, 8) {$v_4$};
    		\node [style=vertex, label=right:\color{blue}{$1$}] (5) at (-6.5, 6) {$v_5$};
    		\node [style=vertex, label=above:\color{blue}{$0$}] (6) at (-5, 4) {$v_6$};
    		\node [style=vertex, label=left:\color{blue}{$\frac{2}{3}$}] (7) at (-3.5, 6) {$v_7$};
    		\node [draw,align=left] at (-3,9) {\tiny $v_0 \xrightarrow[]{1/3} v_1$\\ \tiny $v_0 \xrightarrow[]{2/3} v_7$\\ \tiny $v_4 \xrightarrow[]{1/3} v_5$\\ \tiny $v_6 \xrightarrow[]{2/3} v_5$\\ \tiny $v_2 \xrightarrow[]{2/3} v_3$};

    		\node [style=vertex, label=above:\color{blue}{$\frac{2}{3}$}] (32) at (5, 10) {$v_0$};
    		\node [style=vertex, label=left:\color{blue}{$0$}] (33) at (2, 6) {$v_1$};
    		\node [style=vertex, label=below:\color{blue}{$\frac{1}{3}$}] (34) at (5, 2) {$v_2$};
    		\node [style=vertex, label=right:\color{blue}{$0$}] (35) at (8, 6) {$v_3$};
    		\node [style=vertex, label=below:\color{blue}{$\frac{2}{3}$}] (36) at (5, 8) {$v_4$};
    		\node [style=vertex, label=right:\color{blue}{$0$}] (37) at (3.5, 6) {$v_5$};
    		\node [style=vertex, label=above:\color{blue}{$1$}] (38) at (5, 4) {$v_6$};
    		\node [style=vertex, label=left:\color{blue}{$0$}] (39) at (6.5, 6) {$v_7$};
    		\node [draw,align=left] at (7,9) {\tiny $v_0 \xrightarrow[]{1/3} v_4$\\ \tiny $v_2 \xrightarrow[]{1/3} v_6$};

    		\node [style=vertex, label=above:\color{blue}{$\frac{2}{3}$}] (40) at (-5, -2) {$v_0$};
    		\node [style=vertex, label=left:\color{blue}{$0$}] (41) at (-8, -6) {$v_1$};
    		\node [style=vertex, label=below:\color{blue}{$1$}] (42) at (-5, -10) {$v_2$};
    		\node [style=vertex, label=right:\color{blue}{$0$}] (43) at (-2, -6) {$v_3$};
    		\node [style=vertex, label=below:\color{blue}{$\frac{2}{3}$}] (44) at (-5, -4) {$v_4$};
    		\node [style=vertex, label=right:\color{blue}{$0$}] (45) at (-6.5, -6) {$v_5$};
    		\node [style=vertex, label=above:\color{blue}{$\frac{1}{3}$}] (46) at (-5, -8) {$v_6$};
    		\node [style=vertex, label=left:\color{blue}{$0$}] (47) at (-3.5, -6) {$v_7$};
    		\node [draw,align=left] at (-3,-3) {\tiny $v_0 \xrightarrow[]{1/3} v_4$\\ \tiny $v_6 \xrightarrow[]{1/3} v_2$};

    		\node [style=vertex, label=above:\color{blue}{$0$}] (48) at (5, -2) {$v_0$};
    		\node [style=vertex, label=left:\color{blue}{$\frac{2}{3}$}] (49) at (2, -6) {$v_1$};
    		\node [style=vertex, label=below:\color{blue}{$0$}] (50) at (5, -10) {$v_2$};
    		\node [style=vertex, label=right:\color{blue}{$1$}] (51) at (8, -6) {$v_3$};
    		\node [style=vertex, label=below:\color{blue}{$0$}] (52) at (5, -4) {$v_4$};
    		\node [style=vertex, label=right:\color{blue}{$\frac{2}{3}$}] (53) at (3.5, -6) {$v_5$};
    		\node [style=vertex, label=above:\color{blue}{$0$}] (54) at (5, -8) {$v_6$};
    		\node [style=vertex, label=left:\color{blue}{$\frac{1}{3}$}] (55) at (6.5, -6) {$v_7$};
    		\node [draw,align=left] at (7,-3) {\tiny $v_0 \xrightarrow[]{2/3} v_1$\\ \tiny $v_0 \xrightarrow[]{1/3} v_7$\\ \tiny $v_6 \xrightarrow[]{2/3} v_5$};
    	\end{pgfonlayer}
    	\begin{pgfonlayer}{edgelayer}
    		\draw (0) to (1);
    		\draw (1) to (2);
    		\draw (2) to (3);
    		\draw (3) to (4);
    		\draw (4) to (5);
    		\draw (5) to (6);
    		\draw (6) to (7);
    		\draw (7) to (0);
    		\draw (0) to (4);
    		\draw (1) to (5);
    		\draw (2) to (6);
    		\draw (3) to (7);
    		\draw (32) to (33);
    		\draw (33) to (34);
    		\draw (34) to (35);
    		\draw (35) to (36);
    		\draw (36) to (37);
    		\draw (37) to (38);
    		\draw (38) to (39);
    		\draw (39) to (32);
    		\draw (32) to (36);
    		\draw (33) to (37);
    		\draw (34) to (38);
    		\draw (35) to (39);
    		\draw (40) to (41);
    		\draw (41) to (42);
    		\draw (42) to (43);
    		\draw (43) to (44);
    		\draw (44) to (45);
    		\draw (45) to (46);
    		\draw (46) to (47);
    		\draw (47) to (40);
    		\draw (40) to (44);
    		\draw (41) to (45);
    		\draw (42) to (46);
    		\draw (43) to (47);
    		\draw (48) to (49);
    		\draw (49) to (50);
    		\draw (50) to (51);
    		\draw (51) to (52);
    		\draw (52) to (53);
    		\draw (53) to (54);
    		\draw (54) to (55);
    		\draw (55) to (48);
    		\draw (48) to (52);
    		\draw (49) to (53);
    		\draw (50) to (54);
    		\draw (51) to (55);
    	\end{pgfonlayer}
    \end{tikzpicture}

    \caption{Configuration of the guards in $Cay(\mathbb{Z}_8, \{\pm 1, 4\})$ after an attack on the vertices $v_2, v_3, v_5, v_6$. The graph on the top left corresponds to the initial configuration of the guards.}
    \label{Figure:CayleyZ8-2}
\end{figure}


\newpage
\begin{figure}[h]
    \centering
	\begin{tikzpicture}[scale=.7]
		\begin{pgfonlayer}{nodelayer}
			\node [style=vertex, label=left:\color{blue}{\tiny $1$}] (0) at (-8.2, 11.2) {$v_0$};
			\node [style=vertex, label=above:\color{blue}{\tiny $0$}] (1) at (-7, 10) {$v_1$};
			\node [style=vertex, label=above:\color{blue}{\tiny $1/5$}] (2) at (-5, 10) {$v_2$};
			\node [style=vertex, label=above:\color{blue}{\tiny $2/5$}] (3) at (-3, 10) {$v_3$};
			\node [style=vertex, label=above:\color{blue}{\tiny $1/5$}] (4) at (-1, 10) {$v_4$};
			\node [style=vertex, label=above:\color{blue}{\tiny $1/5$}] (5) at (1, 10) {$v_5$};
			\node [style=vertex, label=above:\color{blue}{\tiny $1/5$}] (6) at (3, 10) {$v_6$};
			\node [style=vertex, label=above:\color{blue}{\tiny $2/5$}] (7) at (5, 10) {$v_7$};
			\node [style=vertex, label=above:\color{blue}{\tiny $1/5$}] (8) at (7, 10) {$v_8$};
			\node [style=vertex, label=right:\color{blue}{\tiny $0$}] (9) at (8.2, 11.2) {$v_9$};
			\node [style=vertex, label=left:\color{blue}{\tiny $2/5$}] (10) at (-8.2, 6.8) {$u_0$};
			\node [style=vertex, label=below:\color{blue}{\tiny $1/5$}] (11) at (-7, 8) {$u_1$};
			\node [style=vertex, label=below:\color{blue}{\tiny $2/5$}] (12) at (-5, 8) {$u_2$};
			\node [style=vertex, label=below:\color{blue}{\tiny $1/5$}] (13) at (-3, 8) {$u_3$};
			\node [style=vertex, label=below:\color{blue}{\tiny $1/5$}] (14) at (-1, 8) {$u_4$};
			\node [style=vertex, label=below:\color{blue}{\tiny $2/5$}] (15) at (1, 8) {$u_5$};
			\node [style=vertex, label=below:\color{blue}{\tiny $1/5$}] (16) at (3, 8) {$u_6$};
			\node [style=vertex, label=below:\color{blue}{\tiny $1/5$}] (17) at (5, 8) {$u_7$};
			\node [style=vertex, label=below:\color{blue}{\tiny $2/5$}] (18) at (7, 8) {$u_8$};
			\node [style=vertex, label=right:\color{blue}{\tiny $1/5$}] (19) at (8.2, 6.8) {$u_9$};

			\node [style=vertex, label=left:\color{blue}{\tiny $0$}] (20) at (-8.2, 5.2) {$v_0$};
			\node [style=vertex, label=above:\color{blue}{\tiny $1$}] (21) at (-7, 4) {$v_1$};
			\node [style=vertex, label=above:\color{blue}{\tiny $0$}] (22) at (-5, 4) {$v_2$};
			\node [style=vertex, label=above:\color{blue}{\tiny $1/5$}] (23) at (-3, 4) {$v_3$};
			\node [style=vertex, label=above:\color{blue}{\tiny $2/5$}] (24) at (-1, 4) {$v_4$};
			\node [style=vertex, label=above:\color{blue}{\tiny $1/5$}] (25) at (1, 4) {$v_5$};
			\node [style=vertex, label=above:\color{blue}{\tiny $1/5$}] (26) at (3, 4) {$v_6$};
			\node [style=vertex, label=above:\color{blue}{\tiny $1/5$}] (27) at (5, 4) {$v_7$};
			\node [style=vertex, label=above:\color{blue}{\tiny $2/5$}] (28) at (7, 4) {$v_8$};
			\node [style=vertex, label=right:\color{blue}{\tiny $1/5$}] (29) at (8.2, 5.2) {$v_9$};
			\node [style=vertex, label=left:\color{blue}{\tiny $1/5$}] (30) at (-8.2, 0.8) {$u_0$};
			\node [style=vertex, label=below:\color{blue}{\tiny $2/5$}] (31) at (-7, 2) {$u_1$};
			\node [style=vertex, label=below:\color{blue}{\tiny $1/5$}] (32) at (-5, 2) {$u_2$};
			\node [style=vertex, label=below:\color{blue}{\tiny $2/5$}] (33) at (-3, 2) {$u_3$};
			\node [style=vertex, label=below:\color{blue}{\tiny $1/5$}] (34) at (-1, 2) {$u_4$};
			\node [style=vertex, label=below:\color{blue}{\tiny $1/5$}] (35) at (1, 2) {$u_5$};
			\node [style=vertex, label=below:\color{blue}{\tiny $2/5$}] (36) at (3, 2) {$u_6$};
			\node [style=vertex, label=below:\color{blue}{\tiny $1/5$}] (37) at (5, 2) {$u_7$};
			\node [style=vertex, label=below:\color{blue}{\tiny $1/5$}] (38) at (7, 2) {$u_8$};
			\node [style=vertex, label=right:\color{blue}{\tiny $2/5$}] (39) at (8.2, 0.8) {$u_9$};

			\node [style=vertex, label=left:\color{blue}{\tiny $1/5$}] (40) at (-8.2, -0.8) {$v_0$};
			\node [style=vertex, label=above:\color{blue}{\tiny $0$}] (41) at (-7, -2) {$v_1$};
			\node [style=vertex, label=above:\color{blue}{\tiny $1$}] (42) at (-5, -2) {$v_2$};
			\node [style=vertex, label=above:\color{blue}{\tiny $1/5$}] (43) at (-3, -2) {$v_3$};
			\node [style=vertex, label=above:\color{blue}{\tiny $2/5$}] (44) at (-1, -2) {$v_4$};
			\node [style=vertex, label=above:\color{blue}{\tiny $1/5$}] (45) at (1, -2) {$v_5$};
			\node [style=vertex, label=above:\color{blue}{\tiny $1/5$}] (46) at (3, -2) {$v_6$};
			\node [style=vertex, label=above:\color{blue}{\tiny $1/5$}] (47) at (5, -2) {$v_7$};
			\node [style=vertex, label=above:\color{blue}{\tiny $2/5$}] (48) at (7, -2) {$v_8$};
			\node [style=vertex, label=right:\color{blue}{\tiny $1/5$}] (49) at (8.2, -0.8) {$v_9$};
			\node [style=vertex, label=left:\color{blue}{\tiny $2/5$}] (50) at (-8.2, -5.2) {$u_0$};
			\node [style=vertex, label=below:\color{blue}{\tiny $1/5$}] (51) at (-7, -4) {$u_1$};
			\node [style=vertex, label=below:\color{blue}{\tiny $2/5$}] (52) at (-5, -4) {$u_2$};
			\node [style=vertex, label=below:\color{blue}{\tiny $1/5$}] (53) at (-3, -4) {$u_3$};
			\node [style=vertex, label=below:\color{blue}{\tiny $2/5$}] (54) at (-1, -4) {$u_4$};
			\node [style=vertex, label=below:\color{blue}{\tiny $1/5$}] (55) at (1, -4) {$u_5$};
			\node [style=vertex, label=below:\color{blue}{\tiny $1/5$}] (56) at (3, -4) {$u_6$};
			\node [style=vertex, label=below:\color{blue}{\tiny $2/5$}] (57) at (5, -4) {$u_7$};
			\node [style=vertex, label=below:\color{blue}{\tiny $1/5$}] (58) at (7, -4) {$u_8$};
			\node [style=vertex, label=right:\color{blue}{\tiny $1/5$}] (59) at (8.2, -5.2) {$u_9$};

			\node [style=vertex, label=left:\color{blue}{\tiny $2/5$}] (60) at (-8.2, -6.8) {$v_0$};
			\node [style=vertex, label=above:\color{blue}{\tiny $1/5$}] (61) at (-7, -8) {$v_1$};
			\node [style=vertex, label=above:\color{blue}{\tiny $0$}] (62) at (-5, -8) {$v_2$};
			\node [style=vertex, label=above:\color{blue}{\tiny $1$}] (63) at (-3, -8) {$v_3$};
			\node [style=vertex, label=above:\color{blue}{\tiny $0$}] (64) at (-1, -8) {$v_4$};
			\node [style=vertex, label=above:\color{blue}{\tiny $1/5$}] (65) at (1, -8) {$v_5$};
			\node [style=vertex, label=above:\color{blue}{\tiny $2/5$}] (66) at (3, -8) {$v_6$};
			\node [style=vertex, label=above:\color{blue}{\tiny $1/5$}] (67) at (5, -8) {$v_7$};
			\node [style=vertex, label=above:\color{blue}{\tiny $1/5$}] (68) at (7, -8) {$v_8$};
			\node [style=vertex, label=right:\color{blue}{\tiny $1/5$}] (69) at (8.2, -6.8) {$v_9$};
			\node [style=vertex, label=left:\color{blue}{\tiny $1/5$}] (70) at (-8.2, -11.2) {$u_0$};
			\node [style=vertex, label=below:\color{blue}{\tiny $2/5$}] (71) at (-7, -10) {$u_1$};
			\node [style=vertex, label=below:\color{blue}{\tiny $1/5$}] (72) at (-5, -10) {$u_2$};
			\node [style=vertex, label=below:\color{blue}{\tiny $2/5$}] (73) at (-3, -10) {$u_3$};
			\node [style=vertex, label=below:\color{blue}{\tiny $1/5$}] (74) at (-1, -10) {$u_4$};
			\node [style=vertex, label=below:\color{blue}{\tiny $2/5$}] (75) at (1, -10) {$u_5$};
			\node [style=vertex, label=below:\color{blue}{\tiny $1/5$}] (76) at (3, -10) {$u_6$};
			\node [style=vertex, label=below:\color{blue}{\tiny $1/5$}] (77) at (5, -10) {$u_7$};
			\node [style=vertex, label=below:\color{blue}{\tiny $2/5$}] (78) at (7, -10) {$u_8$};
			\node [style=vertex, label=right:\color{blue}{\tiny $1/5$}] (79) at (8.2, -11.2) {$u_9$};
		\end{pgfonlayer}
		\begin{pgfonlayer}{edgelayer}
			\draw (0) to (1);
			\draw (1) to (2);
			\draw (2) to (3);
			\draw (3) to (4);
			\draw (4) to (5);
			\draw (5) to (6);
			\draw (6) to (7);
			\draw (7) to (8);
			\draw (8) to (9);
			\draw (9) to (0);
			\draw (10) to (11);
			\draw (11) to (12);
			\draw (12) to (13);
			\draw (13) to (14);
			\draw (14) to (15);
			\draw (15) to (16);
			\draw (16) to (17);
			\draw (17) to (18);
			\draw (18) to (19);
			\draw (19) to (10);
			\draw (0) to (10);
			\draw (1) to (11);
			\draw (2) to (12);
			\draw (3) to (13);
			\draw (4) to (14);
			\draw (5) to (15);
			\draw (6) to (16);
			\draw (7) to (17);
			\draw (8) to (18);
			\draw (9) to (19);
			\draw (20) to (21);
			\draw (21) to (22);
			\draw (22) to (23);
			\draw (23) to (24);
			\draw (24) to (25);
			\draw (25) to (26);
			\draw (26) to (27);
			\draw (27) to (28);
			\draw (28) to (29);
			\draw (29) to (20);
			\draw (30) to (31);
			\draw (31) to (32);
			\draw (32) to (33);
			\draw (33) to (34);
			\draw (34) to (35);
			\draw (35) to (36);
			\draw (36) to (37);
			\draw (37) to (38);
			\draw (38) to (39);
			\draw (39) to (30);
			\draw (20) to (30);
			\draw (21) to (31);
			\draw (22) to (32);
			\draw (23) to (33);
			\draw (24) to (34);
			\draw (25) to (35);
			\draw (26) to (36);
			\draw (27) to (37);
			\draw (28) to (38);
			\draw (29) to (39);
			\draw (40) to (41);
			\draw (41) to (42);
			\draw (42) to (43);
			\draw (43) to (44);
			\draw (44) to (45);
			\draw (45) to (46);
			\draw (46) to (47);
			\draw (47) to (48);
			\draw (48) to (49);
			\draw (49) to (40);
			\draw (50) to (51);
			\draw (51) to (52);
			\draw (52) to (53);
			\draw (53) to (54);
			\draw (54) to (55);
			\draw (55) to (56);
			\draw (56) to (57);
			\draw (57) to (58);
			\draw (58) to (59);
			\draw (59) to (50);
			\draw (40) to (50);
			\draw (41) to (51);
			\draw (42) to (52);
			\draw (43) to (53);
			\draw (44) to (54);
			\draw (45) to (55);
			\draw (46) to (56);
			\draw (47) to (57);
			\draw (48) to (58);
			\draw (49) to (59);
			\draw (60) to (61);
			\draw (61) to (62);
			\draw (62) to (63);
			\draw (63) to (64);
			\draw (64) to (65);
			\draw (65) to (66);
			\draw (66) to (67);
			\draw (67) to (68);
			\draw (68) to (69);
			\draw (69) to (60);
			\draw (70) to (71);
			\draw (71) to (72);
			\draw (72) to (73);
			\draw (73) to (74);
			\draw (74) to (75);
			\draw (75) to (76);
			\draw (76) to (77);
			\draw (77) to (78);
			\draw (78) to (79);
			\draw (79) to (70);
			\draw (60) to (70);
			\draw (61) to (71);
			\draw (62) to (72);
			\draw (63) to (73);
			\draw (64) to (74);
			\draw (65) to (75);
			\draw (66) to (76);
			\draw (67) to (77);
			\draw (68) to (78);
			\draw (69) to (79);
		\end{pgfonlayer}
	\end{tikzpicture}
	
    \caption{Configuration of the guards in $C_{10} \Box K_2$ after an attack on the vertices $v_0, v_1, v_2, v_3$. The graph on top corresponds to the initial configuration of the guards.}
	\label{Figure:C10}
\end{figure}


\newpage
\begin{figure}[h]
    \centering
	\begin{tikzpicture}[scale=.7]
		\begin{pgfonlayer}{nodelayer}
			\node [style=vertex, label=left:\color{blue}{\tiny $1/5$}] (0) at (-8.2, 11.2) {$v_0$};
			\node [style=vertex, label=above:\color{blue}{\tiny $2/5$}] (1) at (-7, 10) {$v_1$};
			\node [style=vertex, label=above:\color{blue}{\tiny $1/5$}] (2) at (-5, 10) {$v_2$};
			\node [style=vertex, label=above:\color{blue}{\tiny $0$}] (3) at (-3, 10) {$v_3$};
			\node [style=vertex, label=above:\color{blue}{\tiny $1$}] (4) at (-1, 10) {$v_4$};
			\node [style=vertex, label=above:\color{blue}{\tiny $0$}] (5) at (1, 10) {$v_5$};
			\node [style=vertex, label=above:\color{blue}{\tiny $1/5$}] (6) at (3, 10) {$v_6$};
			\node [style=vertex, label=above:\color{blue}{\tiny $2/5$}] (7) at (5, 10) {$v_7$};
			\node [style=vertex, label=above:\color{blue}{\tiny $1/5$}] (8) at (7, 10) {$v_8$};
			\node [style=vertex, label=right:\color{blue}{\tiny $1/5$}] (9) at (8.2, 11.2) {$v_9$};
			\node [style=vertex, label=left:\color{blue}{\tiny $1/5$}] (10) at (-8.2, 6.8) {$u_0$};
			\node [style=vertex, label=below:\color{blue}{\tiny $1/5$}] (11) at (-7, 8) {$u_1$};
			\node [style=vertex, label=below:\color{blue}{\tiny $2/5$}] (12) at (-5, 8) {$u_2$};
			\node [style=vertex, label=below:\color{blue}{\tiny $1/5$}] (13) at (-3, 8) {$u_3$};
			\node [style=vertex, label=below:\color{blue}{\tiny $2/5$}] (14) at (-1, 8) {$u_4$};
			\node [style=vertex, label=below:\color{blue}{\tiny $1/5$}] (15) at (1, 8) {$u_5$};
			\node [style=vertex, label=below:\color{blue}{\tiny $2/5$}] (16) at (3, 8) {$u_6$};
			\node [style=vertex, label=below:\color{blue}{\tiny $1/5$}] (17) at (5, 8) {$u_7$};
			\node [style=vertex, label=below:\color{blue}{\tiny $1/5$}] (18) at (7, 8) {$u_8$};
			\node [style=vertex, label=right:\color{blue}{\tiny $2/5$}] (19) at (8.2, 6.8) {$u_9$};

			\node [style=vertex, label=left:\color{blue}{\tiny $1/5$}] (20) at (-8.2, 5.2) {$v_0$};
			\node [style=vertex, label=above:\color{blue}{\tiny $1/5$}] (21) at (-7, 4) {$v_1$};
			\node [style=vertex, label=above:\color{blue}{\tiny $2/5$}] (22) at (-5, 4) {$v_2$};
			\node [style=vertex, label=above:\color{blue}{\tiny $1/5$}] (23) at (-3, 4) {$v_3$};
			\node [style=vertex, label=above:\color{blue}{\tiny $0$}] (24) at (-1, 4) {$v_4$};
			\node [style=vertex, label=above:\color{blue}{\tiny $1$}] (25) at (1, 4) {$v_5$};
			\node [style=vertex, label=above:\color{blue}{\tiny $0$}] (26) at (3, 4) {$v_6$};
			\node [style=vertex, label=above:\color{blue}{\tiny $1/5$}] (27) at (5, 4) {$v_7$};
			\node [style=vertex, label=above:\color{blue}{\tiny $2/5$}] (28) at (7, 4) {$v_8$};
			\node [style=vertex, label=right:\color{blue}{\tiny $1/5$}] (29) at (8.2, 5.2) {$v_9$};
			\node [style=vertex, label=left:\color{blue}{\tiny $2/5$}] (30) at (-8.2, 0.8) {$u_0$};
			\node [style=vertex, label=below:\color{blue}{\tiny $1/5$}] (31) at (-7, 2) {$u_1$};
			\node [style=vertex, label=below:\color{blue}{\tiny $1/5$}] (32) at (-5, 2) {$u_2$};
			\node [style=vertex, label=below:\color{blue}{\tiny $2/5$}] (33) at (-3, 2) {$u_3$};
			\node [style=vertex, label=below:\color{blue}{\tiny $1/5$}] (34) at (-1, 2) {$u_4$};
			\node [style=vertex, label=below:\color{blue}{\tiny $2/5$}] (35) at (1, 2) {$u_5$};
			\node [style=vertex, label=below:\color{blue}{\tiny $1/5$}] (36) at (3, 2) {$u_6$};
			\node [style=vertex, label=below:\color{blue}{\tiny $2/5$}] (37) at (5, 2) {$u_7$};
			\node [style=vertex, label=below:\color{blue}{\tiny $1/5$}] (38) at (7, 2) {$u_8$};
			\node [style=vertex, label=right:\color{blue}{\tiny $1/5$}] (39) at (8.2, 0.8) {$u_9$};

			\node [style=vertex, label=left:\color{blue}{\tiny $2/5$}] (40) at (-8.2, -0.8) {$v_0$};
			\node [style=vertex, label=above:\color{blue}{\tiny $1/5$}] (41) at (-7, -2) {$v_1$};
			\node [style=vertex, label=above:\color{blue}{\tiny $2/5$}] (42) at (-5, -2) {$v_2$};
			\node [style=vertex, label=above:\color{blue}{\tiny $1/5$}] (43) at (-3, -2) {$v_3$};
			\node [style=vertex, label=above:\color{blue}{\tiny $1/5$}] (44) at (-1, -2) {$v_4$};
			\node [style=vertex, label=above:\color{blue}{\tiny $2/5$}] (45) at (1, -2) {$v_5$};
			\node [style=vertex, label=above:\color{blue}{\tiny $1/5$}] (46) at (3, -2) {$v_6$};
			\node [style=vertex, label=above:\color{blue}{\tiny $1/5$}] (47) at (5, -2) {$v_7$};
			\node [style=vertex, label=above:\color{blue}{\tiny $2/5$}] (48) at (7, -2) {$v_8$};
			\node [style=vertex, label=right:\color{blue}{\tiny $1/5$}] (49) at (8.2, -0.8) {$v_9$};
			\node [style=vertex, label=left:\color{blue}{\tiny $1$}] (50) at (-8.2, -5.2) {$u_0$};
			\node [style=vertex, label=below:\color{blue}{\tiny $0$}] (51) at (-7, -4) {$u_1$};
			\node [style=vertex, label=below:\color{blue}{\tiny $1/5$}] (52) at (-5, -4) {$u_2$};
			\node [style=vertex, label=below:\color{blue}{\tiny $2/5$}] (53) at (-3, -4) {$u_3$};
			\node [style=vertex, label=below:\color{blue}{\tiny $1/5$}] (54) at (-1, -4) {$u_4$};
			\node [style=vertex, label=below:\color{blue}{\tiny $1/5$}] (55) at (1, -4) {$u_5$};
			\node [style=vertex, label=below:\color{blue}{\tiny $1/5$}] (56) at (3, -4) {$u_6$};
			\node [style=vertex, label=below:\color{blue}{\tiny $2/5$}] (57) at (5, -4) {$u_7$};
			\node [style=vertex, label=below:\color{blue}{\tiny $1/5$}] (58) at (7, -4) {$u_8$};
			\node [style=vertex, label=right:\color{blue}{\tiny $0$}] (59) at (8.2, -5.2) {$u_9$};

			\node [style=vertex, label=left:\color{blue}{\tiny $1/5$}] (60) at (-8.2, -6.8) {$v_0$};
			\node [style=vertex, label=above:\color{blue}{\tiny $2/5$}] (61) at (-7, -8) {$v_1$};
			\node [style=vertex, label=above:\color{blue}{\tiny $1/5$}] (62) at (-5, -8) {$v_2$};
			\node [style=vertex, label=above:\color{blue}{\tiny $2/5$}] (63) at (-3, -8) {$v_3$};
			\node [style=vertex, label=above:\color{blue}{\tiny $1/5$}] (64) at (-1, -8) {$v_4$};
			\node [style=vertex, label=above:\color{blue}{\tiny $1/5$}] (65) at (1, -8) {$v_5$};
			\node [style=vertex, label=above:\color{blue}{\tiny $2/5$}] (66) at (3, -8) {$v_6$};
			\node [style=vertex, label=above:\color{blue}{\tiny $1/5$}] (67) at (5, -8) {$v_7$};
			\node [style=vertex, label=above:\color{blue}{\tiny $1/5$}] (68) at (7, -8) {$v_8$};
			\node [style=vertex, label=right:\color{blue}{\tiny $2/5$}] (69) at (8.2, -6.8) {$v_9$};
			\node [style=vertex, label=left:\color{blue}{\tiny $0$}] (70) at (-8.2, -11.2) {$u_0$};
			\node [style=vertex, label=below:\color{blue}{\tiny $1$}] (71) at (-7, -10) {$u_1$};
			\node [style=vertex, label=below:\color{blue}{\tiny $0$}] (72) at (-5, -10) {$u_2$};
			\node [style=vertex, label=below:\color{blue}{\tiny $1/5$}] (73) at (-3, -10) {$u_3$};
			\node [style=vertex, label=below:\color{blue}{\tiny $2/5$}] (74) at (-1, -10) {$u_4$};
			\node [style=vertex, label=below:\color{blue}{\tiny $1/5$}] (75) at (1, -10) {$u_5$};
			\node [style=vertex, label=below:\color{blue}{\tiny $1/5$}] (76) at (3, -10) {$u_6$};
			\node [style=vertex, label=below:\color{blue}{\tiny $1/5$}] (77) at (5, -10) {$u_7$};
			\node [style=vertex, label=below:\color{blue}{\tiny $2/5$}] (78) at (7, -10) {$u_8$};
			\node [style=vertex, label=right:\color{blue}{\tiny $1/5$}] (79) at (8.2, -11.2) {$u_9$};
		\end{pgfonlayer}
		\begin{pgfonlayer}{edgelayer}
			\draw (0) to (1);
			\draw (1) to (2);
			\draw (2) to (3);
			\draw (3) to (4);
			\draw (4) to (5);
			\draw (5) to (6);
			\draw (6) to (7);
			\draw (7) to (8);
			\draw (8) to (9);
			\draw (9) to (0);
			\draw (10) to (11);
			\draw (11) to (12);
			\draw (12) to (13);
			\draw (13) to (14);
			\draw (14) to (15);
			\draw (15) to (16);
			\draw (16) to (17);
			\draw (17) to (18);
			\draw (18) to (19);
			\draw (19) to (10);
			\draw (0) to (10);
			\draw (1) to (11);
			\draw (2) to (12);
			\draw (3) to (13);
			\draw (4) to (14);
			\draw (5) to (15);
			\draw (6) to (16);
			\draw (7) to (17);
			\draw (8) to (18);
			\draw (9) to (19);
			\draw (20) to (21);
			\draw (21) to (22);
			\draw (22) to (23);
			\draw (23) to (24);
			\draw (24) to (25);
			\draw (25) to (26);
			\draw (26) to (27);
			\draw (27) to (28);
			\draw (28) to (29);
			\draw (29) to (20);
			\draw (30) to (31);
			\draw (31) to (32);
			\draw (32) to (33);
			\draw (33) to (34);
			\draw (34) to (35);
			\draw (35) to (36);
			\draw (36) to (37);
			\draw (37) to (38);
			\draw (38) to (39);
			\draw (39) to (30);
			\draw (20) to (30);
			\draw (21) to (31);
			\draw (22) to (32);
			\draw (23) to (33);
			\draw (24) to (34);
			\draw (25) to (35);
			\draw (26) to (36);
			\draw (27) to (37);
			\draw (28) to (38);
			\draw (29) to (39);
			\draw (40) to (41);
			\draw (41) to (42);
			\draw (42) to (43);
			\draw (43) to (44);
			\draw (44) to (45);
			\draw (45) to (46);
			\draw (46) to (47);
			\draw (47) to (48);
			\draw (48) to (49);
			\draw (49) to (40);
			\draw (50) to (51);
			\draw (51) to (52);
			\draw (52) to (53);
			\draw (53) to (54);
			\draw (54) to (55);
			\draw (55) to (56);
			\draw (56) to (57);
			\draw (57) to (58);
			\draw (58) to (59);
			\draw (59) to (50);
			\draw (40) to (50);
			\draw (41) to (51);
			\draw (42) to (52);
			\draw (43) to (53);
			\draw (44) to (54);
			\draw (45) to (55);
			\draw (46) to (56);
			\draw (47) to (57);
			\draw (48) to (58);
			\draw (49) to (59);
			\draw (60) to (61);
			\draw (61) to (62);
			\draw (62) to (63);
			\draw (63) to (64);
			\draw (64) to (65);
			\draw (65) to (66);
			\draw (66) to (67);
			\draw (67) to (68);
			\draw (68) to (69);
			\draw (69) to (60);
			\draw (70) to (71);
			\draw (71) to (72);
			\draw (72) to (73);
			\draw (73) to (74);
			\draw (74) to (75);
			\draw (75) to (76);
			\draw (76) to (77);
			\draw (77) to (78);
			\draw (78) to (79);
			\draw (79) to (70);
			\draw (60) to (70);
			\draw (61) to (71);
			\draw (62) to (72);
			\draw (63) to (73);
			\draw (64) to (74);
			\draw (65) to (75);
			\draw (66) to (76);
			\draw (67) to (77);
			\draw (68) to (78);
			\draw (69) to (79);
		\end{pgfonlayer}
	\end{tikzpicture}
	
    \caption{Configuration of the guards in $C_{10} \Box K_2$ after an attack on the vertices $v_4, v_5, u_0, u_1$. The graph on top corresponds to the initial configuration of the guards.}
	\label{Figure:C10-1}
\end{figure}


\newpage
\begin{figure}[h]
    \centering
	\begin{tikzpicture}[scale=.7]
		\begin{pgfonlayer}{nodelayer}
			\node [style=vertex, label=left:\color{blue}{\tiny $2/5$}] (0) at (-8.2, 11.2) {$v_0$};
			\node [style=vertex, label=above:\color{blue}{\tiny $1/5$}] (1) at (-7, 10) {$v_1$};
			\node [style=vertex, label=above:\color{blue}{\tiny $2/5$}] (2) at (-5, 10) {$v_2$};
			\node [style=vertex, label=above:\color{blue}{\tiny $1/5$}] (3) at (-3, 10) {$v_3$};
			\node [style=vertex, label=above:\color{blue}{\tiny $2/5$}] (4) at (-1, 10) {$v_4$};
			\node [style=vertex, label=above:\color{blue}{\tiny $1/5$}] (5) at (1, 10) {$v_5$};
			\node [style=vertex, label=above:\color{blue}{\tiny $1/5$}] (6) at (3, 10) {$v_6$};
			\node [style=vertex, label=above:\color{blue}{\tiny $2/5$}] (7) at (5, 10) {$v_7$};
			\node [style=vertex, label=above:\color{blue}{\tiny $1/5$}] (8) at (7, 10) {$v_8$};
			\node [style=vertex, label=right:\color{blue}{\tiny $1/5$}] (9) at (8.2, 11.2) {$v_9$};
			\node [style=vertex, label=left:\color{blue}{\tiny $1/5$}] (10) at (-8.2, 6.8) {$u_0$};
			\node [style=vertex, label=below:\color{blue}{\tiny $0$}] (11) at (-7, 8) {$u_1$};
			\node [style=vertex, label=below:\color{blue}{\tiny $1$}] (12) at (-5, 8) {$u_2$};
			\node [style=vertex, label=below:\color{blue}{\tiny $0$}] (13) at (-3, 8) {$u_3$};
			\node [style=vertex, label=below:\color{blue}{\tiny $1/5$}] (14) at (-1, 8) {$u_4$};
			\node [style=vertex, label=below:\color{blue}{\tiny $2/5$}] (15) at (1, 8) {$u_5$};
			\node [style=vertex, label=below:\color{blue}{\tiny $1/5$}] (16) at (3, 8) {$u_6$};
			\node [style=vertex, label=below:\color{blue}{\tiny $1/5$}] (17) at (5, 8) {$u_7$};
			\node [style=vertex, label=below:\color{blue}{\tiny $1/5$}] (18) at (7, 8) {$u_8$};
			\node [style=vertex, label=right:\color{blue}{\tiny $2/5$}] (19) at (8.2, 6.8) {$u_9$};
			
			\node [style=vertex, label=left:\color{blue}{\tiny $1/5$}] (20) at (-8.2, 5.2) {$v_0$};
			\node [style=vertex, label=above:\color{blue}{\tiny $2/5$}] (21) at (-7, 4) {$v_1$};
			\node [style=vertex, label=above:\color{blue}{\tiny $1/5$}] (22) at (-5, 4) {$v_2$};
			\node [style=vertex, label=above:\color{blue}{\tiny $2/5$}] (23) at (-3, 4) {$v_3$};
			\node [style=vertex, label=above:\color{blue}{\tiny $1/5$}] (24) at (-1, 4) {$v_4$};
			\node [style=vertex, label=above:\color{blue}{\tiny $2/5$}] (25) at (1, 4) {$v_5$};
			\node [style=vertex, label=above:\color{blue}{\tiny $1/5$}] (26) at (3, 4) {$v_6$};
			\node [style=vertex, label=above:\color{blue}{\tiny $1/5$}] (27) at (5, 4) {$v_7$};
			\node [style=vertex, label=above:\color{blue}{\tiny $2/5$}] (28) at (7, 4) {$v_8$};
			\node [style=vertex, label=right:\color{blue}{\tiny $1/5$}] (29) at (8.2, 5.2) {$v_9$};
			\node [style=vertex, label=left:\color{blue}{\tiny $2/5$}] (30) at (-8.2, 0.8) {$u_0$};
			\node [style=vertex, label=below:\color{blue}{\tiny $1/5$}] (31) at (-7, 2) {$u_1$};
			\node [style=vertex, label=below:\color{blue}{\tiny $0$}] (32) at (-5, 2) {$u_2$};
			\node [style=vertex, label=below:\color{blue}{\tiny $1$}] (33) at (-3, 2) {$u_3$};
			\node [style=vertex, label=below:\color{blue}{\tiny $0$}] (34) at (-1, 2) {$u_4$};
			\node [style=vertex, label=below:\color{blue}{\tiny $1/5$}] (35) at (1, 2) {$u_5$};
			\node [style=vertex, label=below:\color{blue}{\tiny $2/5$}] (36) at (3, 2) {$u_6$};
			\node [style=vertex, label=below:\color{blue}{\tiny $1/5$}] (37) at (5, 2) {$u_7$};
			\node [style=vertex, label=below:\color{blue}{\tiny $1/5$}] (38) at (7, 2) {$u_8$};
			\node [style=vertex, label=right:\color{blue}{\tiny $1/5$}] (39) at (8.2, 0.8) {$u_9$};
			
			\node [style=vertex, label=left:\color{blue}{\tiny $1/5$}] (40) at (-8.2, -0.8) {$v_0$};
			\node [style=vertex, label=above:\color{blue}{\tiny $1/5$}] (41) at (-7, -2) {$v_1$};
			\node [style=vertex, label=above:\color{blue}{\tiny $2/5$}] (42) at (-5, -2) {$v_2$};
			\node [style=vertex, label=above:\color{blue}{\tiny $1/5$}] (43) at (-3, -2) {$v_3$};
			\node [style=vertex, label=above:\color{blue}{\tiny $2/5$}] (44) at (-1, -2) {$v_4$};
			\node [style=vertex, label=above:\color{blue}{\tiny $1/5$}] (45) at (1, -2) {$v_5$};
			\node [style=vertex, label=above:\color{blue}{\tiny $2/5$}] (46) at (3, -2) {$v_6$};
			\node [style=vertex, label=above:\color{blue}{\tiny $1/5$}] (47) at (5, -2) {$v_7$};
			\node [style=vertex, label=above:\color{blue}{\tiny $1/5$}] (48) at (7, -2) {$v_8$};
			\node [style=vertex, label=right:\color{blue}{\tiny $2/5$}] (49) at (8.2, -0.8) {$v_9$};
			\node [style=vertex, label=left:\color{blue}{\tiny $1/5$}] (50) at (-8.2, -5.2) {$u_0$};
			\node [style=vertex, label=below:\color{blue}{\tiny $2/5$}] (51) at (-7, -4) {$u_1$};
			\node [style=vertex, label=below:\color{blue}{\tiny $1/5$}] (52) at (-5, -4) {$u_2$};
			\node [style=vertex, label=below:\color{blue}{\tiny $0$}] (53) at (-3, -4) {$u_3$};
			\node [style=vertex, label=below:\color{blue}{\tiny $1$}] (54) at (-1, -4) {$u_4$};
			\node [style=vertex, label=below:\color{blue}{\tiny $0$}] (55) at (1, -4) {$u_5$};
			\node [style=vertex, label=below:\color{blue}{\tiny $1/5$}] (56) at (3, -4) {$u_6$};
			\node [style=vertex, label=below:\color{blue}{\tiny $2/5$}] (57) at (5, -4) {$u_7$};
			\node [style=vertex, label=below:\color{blue}{\tiny $1/5$}] (58) at (7, -4) {$u_8$};
			\node [style=vertex, label=right:\color{blue}{\tiny $1/5$}] (59) at (8.2, -5.2) {$u_9$};

			\node [style=vertex, label=left:\color{blue}{\tiny $2/5$}] (60) at (-8.2, -6.8) {$v_0$};
			\node [style=vertex, label=above:\color{blue}{\tiny $1/5$}] (61) at (-7, -8) {$v_1$};
			\node [style=vertex, label=above:\color{blue}{\tiny $1/5$}] (62) at (-5, -8) {$v_2$};
			\node [style=vertex, label=above:\color{blue}{\tiny $2/5$}] (63) at (-3, -8) {$v_3$};
			\node [style=vertex, label=above:\color{blue}{\tiny $1/5$}] (64) at (-1, -8) {$v_4$};
			\node [style=vertex, label=above:\color{blue}{\tiny $2/5$}] (65) at (1, -8) {$v_5$};
			\node [style=vertex, label=above:\color{blue}{\tiny $1/5$}] (66) at (3, -8) {$v_6$};
			\node [style=vertex, label=above:\color{blue}{\tiny $2/5$}] (67) at (5, -8) {$v_7$};
			\node [style=vertex, label=above:\color{blue}{\tiny $1/5$}] (68) at (7, -8) {$v_8$};
			\node [style=vertex, label=right:\color{blue}{\tiny $1/5$}] (69) at (8.2, -6.8) {$v_9$};
			\node [style=vertex, label=left:\color{blue}{\tiny $1/5$}] (70) at (-8.2, -11.2) {$u_0$};
			\node [style=vertex, label=below:\color{blue}{\tiny $1/5$}] (71) at (-7, -10) {$u_1$};
			\node [style=vertex, label=below:\color{blue}{\tiny $2/5$}] (72) at (-5, -10) {$u_2$};
			\node [style=vertex, label=below:\color{blue}{\tiny $1/5$}] (73) at (-3, -10) {$u_3$};
			\node [style=vertex, label=below:\color{blue}{\tiny $0$}] (74) at (-1, -10) {$u_4$};
			\node [style=vertex, label=below:\color{blue}{\tiny $1$}] (75) at (1, -10) {$u_5$};
			\node [style=vertex, label=below:\color{blue}{\tiny $0$}] (76) at (3, -10) {$u_6$};
			\node [style=vertex, label=below:\color{blue}{\tiny $1/5$}] (77) at (5, -10) {$u_7$};
			\node [style=vertex, label=below:\color{blue}{\tiny $2/5$}] (78) at (7, -10) {$u_8$};
			\node [style=vertex, label=right:\color{blue}{\tiny $1/5$}] (79) at (8.2, -11.2) {$u_9$};
		\end{pgfonlayer}
		\begin{pgfonlayer}{edgelayer}
			\draw (0) to (1);
			\draw (1) to (2);
			\draw (2) to (3);
			\draw (3) to (4);
			\draw (4) to (5);
			\draw (5) to (6);
			\draw (6) to (7);
			\draw (7) to (8);
			\draw (8) to (9);
			\draw (9) to (0);
			\draw (10) to (11);
			\draw (11) to (12);
			\draw (12) to (13);
			\draw (13) to (14);
			\draw (14) to (15);
			\draw (15) to (16);
			\draw (16) to (17);
			\draw (17) to (18);
			\draw (18) to (19);
			\draw (19) to (10);
			\draw (0) to (10);
			\draw (1) to (11);
			\draw (2) to (12);
			\draw (3) to (13);
			\draw (4) to (14);
			\draw (5) to (15);
			\draw (6) to (16);
			\draw (7) to (17);
			\draw (8) to (18);
			\draw (9) to (19);
			\draw (20) to (21);
			\draw (21) to (22);
			\draw (22) to (23);
			\draw (23) to (24);
			\draw (24) to (25);
			\draw (25) to (26);
			\draw (26) to (27);
			\draw (27) to (28);
			\draw (28) to (29);
			\draw (29) to (20);
			\draw (30) to (31);
			\draw (31) to (32);
			\draw (32) to (33);
			\draw (33) to (34);
			\draw (34) to (35);
			\draw (35) to (36);
			\draw (36) to (37);
			\draw (37) to (38);
			\draw (38) to (39);
			\draw (39) to (30);
			\draw (20) to (30);
			\draw (21) to (31);
			\draw (22) to (32);
			\draw (23) to (33);
			\draw (24) to (34);
			\draw (25) to (35);
			\draw (26) to (36);
			\draw (27) to (37);
			\draw (28) to (38);
			\draw (29) to (39);
			\draw (40) to (41);
			\draw (41) to (42);
			\draw (42) to (43);
			\draw (43) to (44);
			\draw (44) to (45);
			\draw (45) to (46);
			\draw (46) to (47);
			\draw (47) to (48);
			\draw (48) to (49);
			\draw (49) to (40);
			\draw (50) to (51);
			\draw (51) to (52);
			\draw (52) to (53);
			\draw (53) to (54);
			\draw (54) to (55);
			\draw (55) to (56);
			\draw (56) to (57);
			\draw (57) to (58);
			\draw (58) to (59);
			\draw (59) to (50);
			\draw (40) to (50);
			\draw (41) to (51);
			\draw (42) to (52);
			\draw (43) to (53);
			\draw (44) to (54);
			\draw (45) to (55);
			\draw (46) to (56);
			\draw (47) to (57);
			\draw (48) to (58);
			\draw (49) to (59);
			\draw (60) to (61);
			\draw (61) to (62);
			\draw (62) to (63);
			\draw (63) to (64);
			\draw (64) to (65);
			\draw (65) to (66);
			\draw (66) to (67);
			\draw (67) to (68);
			\draw (68) to (69);
			\draw (69) to (60);
			\draw (70) to (71);
			\draw (71) to (72);
			\draw (72) to (73);
			\draw (73) to (74);
			\draw (74) to (75);
			\draw (75) to (76);
			\draw (76) to (77);
			\draw (77) to (78);
			\draw (78) to (79);
			\draw (79) to (70);
			\draw (60) to (70);
			\draw (61) to (71);
			\draw (62) to (72);
			\draw (63) to (73);
			\draw (64) to (74);
			\draw (65) to (75);
			\draw (66) to (76);
			\draw (67) to (77);
			\draw (68) to (78);
			\draw (69) to (79);
		\end{pgfonlayer}
	\end{tikzpicture}
	
    \caption{Configuration of the guards in $C_{10} \Box K_2$ after an attack on the vertices $u_2, u_3, u_4, u_5$. The graph on top corresponds to the initial configuration of the guards.}
	\label{Figure:C10-2}
\end{figure}


\begin{table}[h]
\begin{center}
    \caption{Response of the guards to each attack on the vertices of $C_{10} \Box K_2$.}
	\label{Table:C10}
	\begin{tabular}{ || m{1cm} || m{14cm} || } 
		\hline
		\small Attack & \small Response \\ 
		\hline\hline
		$v_1$	&
			\small
			$v_0 \xrightarrow[]{1} v_1$,
			$v_2 \xrightarrow[]{1/5} v_3$,
			$v_3 \xrightarrow[]{2/5} v_4$,
			$v_4 \xrightarrow[]{1/5} v_5$,
			$v_5 \xrightarrow[]{1/5} v_6$,
			$v_6 \xrightarrow[]{1/5} v_7$,
			$v_7 \xrightarrow[]{2/5} v_8$,
			$v_8 \xrightarrow[]{1/5} v_9$,
			$u_0 \xrightarrow[]{2/5} u_1$,
			$u_1 \xrightarrow[]{1/5} u_2$,
			$u_2 \xrightarrow[]{2/5} u_3$,
			$u_3 \xrightarrow[]{1/5} u_4$,
			$u_4 \xrightarrow[]{1/5} u_5$,
			$u_5 \xrightarrow[]{2/5} u_6$,
			$u_6 \xrightarrow[]{1/5} u_7$,
			$u_7 \xrightarrow[]{1/5} u_8$,
			$u_8 \xrightarrow[]{2/5} u_9$,
			$u_9 \xrightarrow[]{1/5} u_0$
			\\ \hline\hline
		$v_2$	&
			\small
			$v_0 \xrightarrow[]{2/5} v_9$,
			$v_0 \xrightarrow[]{2/5} u_0$,
			$v_3 \xrightarrow[]{2/5} v_2$,
			$v_6 \xrightarrow[]{1/5} v_5$,
			$v_7 \xrightarrow[]{1/5} v_6$,
			$u_0 \xrightarrow[]{1/5} u_1$,
			$u_0 \xrightarrow[]{1/5} u_9$,
			$u_1 \xrightarrow[]{1/5} u_2$,
			$u_2 \xrightarrow[]{2/5} v_2$,
			$u_3 \xrightarrow[]{1/5} u_2$,
			$u_4 \xrightarrow[]{1/5} u_3$,
			$u_5 \xrightarrow[]{2/5} u_4$,
			$u_6 \xrightarrow[]{1/5} u_5$,
			$u_7 \xrightarrow[]{1/5} u_6$,
			$u_8 \xrightarrow[]{2/5} u_7$,
			$u_9 \xrightarrow[]{1/5} u_8$
			\\ \hline\hline
		$v_3$	&
			\small
			$v_0 \xrightarrow[]{1/5} v_1$,
			$v_0 \xrightarrow[]{1/5} v_9$,
			$v_0 \xrightarrow[]{1/5} u_0$,
			$v_2 \xrightarrow[]{1/5} v_3$,
			$v_4 \xrightarrow[]{1/5} v_3$,
			$v_7 \xrightarrow[]{1/5} v_6$,
			$u_0 \xrightarrow[]{1/5} u_1$,
			$u_0 \xrightarrow[]{1/5} u_9$,
			$u_2 \xrightarrow[]{1/5} u_3$,
			$u_4 \xrightarrow[]{1/5} u_3$,
			$u_5 \xrightarrow[]{1/5} u_4$,
			$u_6 \xrightarrow[]{1/5} u_5$,
			$u_7 \xrightarrow[]{1/5} u_6$,
			$u_8 \xrightarrow[]{1/5} u_7$,
			$u_9 \xrightarrow[]{1/5} u_8$
			\\ \hline\hline
		$v_4$	&
			\small
			$v_0 \xrightarrow[]{1/5} v_1$,
			$v_0 \xrightarrow[]{1/5} v_9$,
			$v_0 \xrightarrow[]{1/5} u_0$,
			$v_3 \xrightarrow[]{2/5} v_4$,
			$v_5 \xrightarrow[]{1/5} v_4$,
			$u_0 \xrightarrow[]{1/5} u_1$,
			$u_0 \xrightarrow[]{1/5} u_9$,
			$u_1 \xrightarrow[]{1/5} u_2$,
			$u_2 \xrightarrow[]{1/5} u_3$,
			$u_3 \xrightarrow[]{1/5} u_4$,
			$u_5 \xrightarrow[]{1/5} u_4$,
			$u_7 \xrightarrow[]{1/5} u_6$,
			$u_8 \xrightarrow[]{1/5} u_7$
			\\ \hline\hline
		$v_5$	&
			\small
			$v_0 \xrightarrow[]{1/5} v_1$,
			$v_0 \xrightarrow[]{1/5} v_9$,
			$v_0 \xrightarrow[]{2/5} u_0$,
			$v_3 \xrightarrow[]{2/5} v_2$,
			$v_4 \xrightarrow[]{1/5} v_5$,
			$v_6 \xrightarrow[]{1/5} v_5$,
			$v_7 \xrightarrow[]{2/5} v_8$,
			$u_0 \xrightarrow[]{1/5} u_1$,
			$u_0 \xrightarrow[]{1/5} u_9$,
			$u_1 \xrightarrow[]{1/5} u_2$,
			$u_2 \xrightarrow[]{2/5} u_3$,
			$u_3 \xrightarrow[]{1/5} u_4$,
			$u_4 \xrightarrow[]{1/5} u_5$
			$u_6 \xrightarrow[]{1/5} u_5$,
			$u_7 \xrightarrow[]{1/5} u_6$,
			$u_8 \xrightarrow[]{2/5} u_7$,
			$u_9 \xrightarrow[]{1/5} u_8$
			\\ \hline\hline
		$u_0$	& 
			\small
			$v_0 \xrightarrow[]{1/5} v_1$,
			$v_0 \xrightarrow[]{1/5} v_9$,
			$v_0 \xrightarrow[]{1/5} u_0$,
			$v_3 \xrightarrow[]{1/5} v_2$,
			$v_7 \xrightarrow[]{1/5} v_8$,
			$u_1 \xrightarrow[]{1/5} u_0$,
			$u_2 \xrightarrow[]{1/5} u_3$,
			$u_5 \xrightarrow[]{1/5} v_5$,
			$u_8 \xrightarrow[]{1/5} u_7$,
			$u_9 \xrightarrow[]{1/5} u_0$
			\\ \hline\hline
		$u_1$	& 
			\small 
			$v_0 \xrightarrow[]{2/5} v_1$,
			$v_0 \xrightarrow[]{2/5} v_9$,
			$v_7 \xrightarrow[]{1/5} v_6$,
			$u_0 \xrightarrow[]{2/5} u_1$,
			$u_2 \xrightarrow[]{2/5} u_1$,
			$u_5 \xrightarrow[]{1/5} u_4$
			\\ \hline\hline
		$u_2$	& 
			\small 
			$v_0 \xrightarrow[]{1/5} v_1$,
			$v_0 \xrightarrow[]{1/5} v_9$,
			$v_0 \xrightarrow[]{1/5} u_0$,
			$v_2 \xrightarrow[]{1/5} u_2$,
			$v_3 \xrightarrow[]{2/5} v_2$,
			$v_4 \xrightarrow[]{1/5} v_3$,
			$v_5 \xrightarrow[]{1/5} v_4$,
			$v_6 \xrightarrow[]{1/5} v_5$,
			$v_7 \xrightarrow[]{1/5} v_6$,
			$v_8 \xrightarrow[]{1/5} v_7$,
			$u_0 \xrightarrow[]{2/5} u_9$,
			$u_1 \xrightarrow[]{1/5} u_2$,
			$u_3 \xrightarrow[]{1/5} u_2$,
			$u_4 \xrightarrow[]{1/5} v_4$,
			$u_5 \xrightarrow[]{1/5} u_4$,
			$u_6 \xrightarrow[]{1/5} u_5$,
			$u_7 \xrightarrow[]{1/5} u_6$,
			$u_8 \xrightarrow[]{1/5} u_7$,
			$u_8 \xrightarrow[]{1/5} v_8$,
			$u_9 \xrightarrow[]{1/5} u_8$	
		\\ \hline\hline
			$u_3$	& 
			\small 
			$v_0 \xrightarrow[]{2/5} v_1$,
			$v_0 \xrightarrow[]{1/5} v_9$,
			$v_0 \xrightarrow[]{1/5} u_0$,
			$v_3 \xrightarrow[]{1/5} u_3$,
			$v_4 \xrightarrow[]{1/5} v_3$,
			$v_5 \xrightarrow[]{1/5} v_4$,
			$v_6 \xrightarrow[]{1/5} v_5$,
			$v_7 \xrightarrow[]{1/5} v_6$,
			$u_0 \xrightarrow[]{1/5} u_9$,
			$u_2 \xrightarrow[]{2/5} u_3$,
			$u_3 \xrightarrow[]{1/5} u_3$,
			$u_5 \xrightarrow[]{1/5} v_5$,
			$u_7 \xrightarrow[]{1/5} u_6$,
			$u_8 \xrightarrow[]{1/5} u_7$,
			$u_9 \xrightarrow[]{1/5} u_8$,
			$u_8 \xrightarrow[]{1/5} v_8$
		\\ \hline\hline
			$u_4$	& 
			\small 
			$v_0 \xrightarrow[]{1/5} v_1$,
			$v_0 \xrightarrow[]{2/5} v_9$,
			$v_0 \xrightarrow[]{1/5} u_0$,
			$v_3 \xrightarrow[]{1/5} v_4$,
			$v_4 \xrightarrow[]{1/5} u_4$,
			$v_5 \xrightarrow[]{1/5} v_4$,	
			$v_6 \xrightarrow[]{1/5} v_5$,
			$v_7 \xrightarrow[]{2/5} v_6$,
			$u_0 \xrightarrow[]{1/5} u_1$,
			$u_0 \xrightarrow[]{1/5} u_9$,
			$u_3 \xrightarrow[]{1/5} u_4$,
			$u_5 \xrightarrow[]{1/5} u_4$,
			$u_8 \xrightarrow[]{2/5} u_7$,
			$u_9 \xrightarrow[]{1/5} u_8$
			\\ \hline\hline
		$u_5$	& 
			\small
			$v_0 \xrightarrow[]{1/5} v_1$,
			$v_0 \xrightarrow[]{1/5} v_9$,
			$v_0 \xrightarrow[]{1/5} u_0$,
			$v_2 \xrightarrow[]{1/5} v_3$,
			$v_3 \xrightarrow[]{1/5} v_4$,
			$v_4 \xrightarrow[]{1/5} v_5$,	
			$v_5 \xrightarrow[]{1/5} u_5$,
			$v_6 \xrightarrow[]{1/5} v_5$,
			$v_7 \xrightarrow[]{1/5} v_6$,
			$v_8 \xrightarrow[]{1/5} v_7$,	
			$u_0 \xrightarrow[]{1/5} u_1$,
			$u_0 \xrightarrow[]{1/5} u_9$,
			$u_1 \xrightarrow[]{1/5} u_2$,
			$u_2 \xrightarrow[]{1/5} v_2$,
			$u_4 \xrightarrow[]{1/5} u_5$,
			$u_6 \xrightarrow[]{1/5} u_5$,
			$u_8 \xrightarrow[]{1/5} v_8$,
			$u_9 \xrightarrow[]{1/5} u_8$
			\\ \hline\hline
	\end{tabular}
\end{center}
\end{table}

\end{document}